\def\singlespace{\def\baselinestretch{1}\@normalsize}
\def\singlespace{\def\baselinestretch{1}\@normalsize}
\numberwithin{equation}{section}
\newcommand{\bfm}[1]{\ensuremath{\mathbf{#1}}}
   \def\bA{\bfm A}  
\def\bb{\bfm b}   \def\bB{\bfm B}  
   \def\bC{\bfm C}  
   \def\bD{\bfm D}  
\def\be{\bfm e}   \def\bE{\bfm E}  
\def\bff{\bfm f}  \def\bF{\bfm F}  
\def\bg{\bfm g}   \def\bG{\bfm G}  
\def\bh{\bfm h}   \def\bH{\bfm H}  
   \def\bI{\bfm I}  
   \def\bK{\bfm K}  
\def\bm{\bfm m}   \def\bM{\bfm M}
   \def\bP{\bfm P}
   \def\bS{\bfm S}  
   \def\bT{\bfm T}  
\def\bu{\bfm u}   \def\bU{\bfm U}  
\def\bv{\bfm v}   \def\bV{\bfm V}  
\def\bw{\bfm w}   \def\bW{\bfm W}  
\def\bx{\bfm x}   \def\bX{\bfm X}  
   \def\bY{\bfm Y}  
\def\bz{\bfm z}   \def\bZ{\bfm Z}
\newcommand{\bfsym}[1]{\ensuremath{\boldsymbol{#1}}}
 \def\balpha{\bfsym \alpha}
 \def\bbeta{\bfsym \beta}			 
 \def\bgamma{\bfsym \gamma}             
 \def\bdelta{\bfsym {\delta}}           \def\bDelta {\bfsym {\Delta}}
 \def\bfeta{\bfsym {\eta}}              
 \def\bnu{\bfsym {\nu}}
 \def\btheta{\bfsym {\theta}}           
           \def\bepsilon{\bfsym \varepsilon}
              \def\bSigma{\bfsym \Sigma}
 \def\blambda {\bfsym {\lambda}}        \def\bLambda {\bfsym {\Lambda}}
  \def\bvarepsilon{\bfsym {\varepsilon}}
 \DeclareMathOperator{\Cov}{Cov}
\DeclareMathOperator{\diag}{diag}
\DeclareMathOperator{\E}{\mathbb E}
\DeclareMathOperator{\rank}{rank}
\DeclareMathOperator{\Var}{Var}
\DeclareMathOperator{\var}{var}
\DeclareMathOperator{\tr}{tr}
\def\today{\ifcase\month\or
  January\or February\or March\or April\or May\or June\or
  July\or August\or September\or October\or November\or December\fi
  \space\number\day, \number\year}
\newdimen\biblioindent    \biblioindent=30pt
\newcommand{\beq}{\begin{equation}}
  \newcommand{\eeq}{\end{equation}}
\newcommand{\beqn}{\begin{eqnarray}}
  \newcommand{\eeqn}{\end{eqnarray}}
\newcommand{\beqnn}{\begin{eqnarray*}}
  \newcommand{\eeqnn}{\end{eqnarray*}}
\def\MEAN{{\rm MEAN}}
\renewcommand{\baselinestretch}{1.66}
\numberwithin{equation}{section}
\theoremstyle{plain}
\newtheorem{thm}{Theorem}[section]
\newtheorem{lem}{Lemma}[section]
\newtheorem{cor}{Corollary}[section]
\newtheorem{prop}{Proposition}[section]
\newtheorem{ass}{Assumption}[section]
\theoremstyle{definition}
\newtheorem{exm}{Example}[section]
\newtheorem{remark}{Remark}[section]
\newtheorem{algo}{Algorithm}[section]
\newcounter{CondCounter}
\begin{document}

\title{\LARGE Learning Latent Factors from Diversified Projections and its Applications to Over-Estimated and Weak Factors}
  \author{  Jianqing Fan\thanks{Department of Operations Research and Financial Engineering, Princeton University, Princeton, NJ 08544, USA.   \texttt{jqfan@princeton.edu}. His research is supported by NSF grants DMS-1662139 and DMS-1712591. }\and  Yuan Liao\thanks{Department of  Economics, Rutgers University, 75 Hamilton St., New Brunswick, NJ 08901, USA. \texttt{yuan.liao@rutgers.edu}}}
\date{}

\maketitle

 \singlespacing

 \begin{abstract}
Estimations and applications of factor models often rely on the crucial condition that the number of latent factors  is consistently estimated,  which in turn also requires that  factors   be relatively strong, data are stationary and weak serial dependence, and the sample size  be fairly large, although in practical applications, one or several of these conditions may fail. In these cases it is  difficult to analyze the eigenvectors of the  data matrix.  To address this issue, we propose simple estimators of the latent factors  using cross-sectional  projections of the panel data, by  weighted averages with pre-determined  weights. These  weights are chosen to diversify away the idiosyncratic  components,  resulting in ``diversified factors".  Because the projections are conducted cross-sectionally,  they are robust to serial conditions, easy to analyze   and work even for finite length of time series. We  formally prove that  this procedure is  robust to over-estimating the number of factors, and  illustrate it  in  several applications, including post-selection inference, big data forecasts, large covariance estimation and factor specification tests.   We also recommend several choices for the diversified weights.



 \end{abstract}


{\small Key words: Large dimensions,   random projections, over-estimating the number of factors, principal components, factor-augmented regression
}



\doublespacing

\section{Introduction}
\label{sec:introduction}

 Consider the following high-dimensional factor model:
\begin{equation}\label{eq1.1}
\bx_t=\bB\bff_t+\bu_t,\quad t=1,\cdots, T,
\end{equation}
where $\bx_t=(x_{1t},\cdots,x_{NT})'$ is an $N$-dimensional outcome. In addition, the model contains   $\bff_t$ as  $r$-dimensional latent factors,  $\bB=(\bb_1,\cdots,\bb_N)'$ as $N\times r$ matrix of loadings, and   $\bu_t=(u_{1t},\cdots,u_{Nt})'$ as   idiosyncratic terms.
Theoretical studies of the   model have  been crucially depending on the assumption that the number of factors, $r$, should be consistently estimated. This in turn, requires the factors be relatively strong,  data  have weak  serial  dependence, and length of time series $T$ is long.
But in practical applications, one or several of these conditions may fail to hold due to weak  signal-noise ratios and  nonstationary  or noisy data,  making  the  first $r$  eigenvalues of the sample covariance of   $\bX=(\bx_1,\cdots,\bx_T)$  empirically be not so-well separated from the  remaining ones.

A promising remedy  is to over-estimate the number of factors.  But  this approach has been quite challenging. 
 Let $R$ be the ``working number of factors" that are empirically estimated.  When $R> r$,    it is often difficult to analyze the behavior of the $(R-r)$ eigenvalues/eigenvectors. As shown in \cite{johnstone2009consistency},    these   eigenvectors can be inconsistent because their   eigenvalues are not so ``spiked".
This  creates challenges to many factor estimators, such as the popular {principal components} (PC)-estimator  \citep{connor1986performance, SW02},  and therefore brings obstacles to  applications when over estimate the number of factors.
Another difficulty is to handle the serial dependence.  As shown by \cite{bai03}, the PC-estimator is inconsistent under finite-$T$ in the presence of serial correlations and heteroskedasticity, but many forecast  applications using estimated factors favor relatively short time series, due to the concern  of nonstationarity.

This paper proposes a new method to address   issues of  over-estimating the number of factors,  small $T$, and strong serial conditions. We propose  a  simple factor estimator   that does not rely on eigenvectors. Let
$
\bW=(\bw_1,\cdots,\bw_R)
$
be a  given  exogenous (or deterministic) $N\times R$ matrix,    where each of its $R$ columns $\bw_k$   is an $N\times 1$ vector of ``diversified weights", in the sense that its strength   should be approximately equally distributed on most of its components. We propose to estimate $\bff_t$ by simply
$$
\widehat\bff_t=\frac{1}{N}\bW'\bx_t, 
$$
or more precisely, the linear space spanned by $\{\bff_t\}_{t=1}^T$ is estimated by that spanned by $\{\widehat\bff_t\}_{t=1}^T$.  By substituting
\eqref{eq1.1} into the definition, we have  \begin{equation}\label{eq1.2}
\widehat\bff_t=\underbrace{(\frac{1}{N}\bW'\bB)}_{\text{affine transform}}\bff_t+\frac{1}{N}\bW'\bu_t.
\end{equation}
Thus $\widehat\bff_t$ (consistently) estimates $\bff_t$ up to an $R\times r$ affine transform, with $\be_t:=\frac{1}{N}\bW'\bu_t$ as the  estimation error. The assumption that $\bW$ should be diversified ensures that as $N\to\infty$, $\be_t$ is ``diversified away" (converging to zero in probability).

We call the new factor estimator as ``diversified factors", which reduces the dimension of $\bx_t$ through diversified projections. Because of the clean expansion    (\ref{eq1.2}), the mathematics  for theoretical analysis   is  much simpler than most benchmark  estimators. We show that $\widehat\bff_t$ leads to valid inferences   in several factor-augmented models   so long as $R\geq r$. Therefore, we formally justify that the use of factor models is robust to over-estimating the number of factors. In particular,  we admit   $r=0$ but  $R\geq 1$ as a special case. That is, the inference is still valid  even if there are  no common factors present,  but we nevertheless take out estimated factors (for insurance). Furthermore,  the projection  is conducted on cross-sections, so is not sensitive to serial conditions.  
We study several applications in detail, including the  post-selection inference, big data forecasts, high-dimensional covariance estimation, and factor specification tests.

One of the key assumptions imposed is that  while $\bW$   diversifies away $\bu_t$,   we have $$
\rank\left(\frac{1}{N}\bW'\bB\right)= r,
$$
and the $r$ th smallest singular value of $\frac{1}{N}\bW'\bB$ does not decay too fast.
That is,  $\bW$  should not diversify away the factor components in the time series.  This condition \textit{does not} hold if $\bW$  has more than $R-r$ columns that are nearly orthogonal to $\bB$.
 This is another motivation of using over-estimated factors: if random weights are used  the probability that more than $R-r$ columns of  $\bW$  are nearly orthogonal to the space of $\bB$ should be very small.  

To satisfy the above conditions on the weights, we rely on external information on the factor loadings, and recommend  four choices for the  weight matrix. The first choice is the individual-specific characteristics.        As documented in semi-parametric factor models, \cite{CMO,park,fan2016projected}, factor loadings are often driven by observed characteristics. When these variables are available, they can be naturally used as diversified weights.  The second choice is based on rolling window estimations. Consider  time  series forecasts. To pertain the  stationarity assumption,  we  divide the sampling periods into (I)  $t=1,...,T_0$ and (II) $t=T_0+1,...,T_0+T$, and only use the    most recent $T$ observations from period (II) to learn the latent factors for forecasts. Or consider a  time series where a structural break occurs at time $T_0$, so the most recent period (II) is of major interest. 
Assume that the loadings are correlated between the two periods, then the PC-estimated loadings from periods (I) would be a good choice of the diversified weights for period (II).
For the third recommendation, when the time series is independent of the initial observation,  we can  use transformations of $\bx_0$ as the weights.
The fourth recommended choice is to use columns of the  Walsh-Hadamard matrix from the statistical experimental design to form the diversified weights. 

 The idea of approximating    factors by   weighted averages of observations has been  applied previously in the  literature. In the asset pricing literature,   factors are created by weighted averages of  a large number of asset returns. There, the weights are also pre-determined, adapted to the filtration up to the last observation time.  In the  {common correlated effects} (CCE) literature  \citep{pesaran, chudik2011weak},  factors are created using a set of random weights to estimate the effect of observables.
 There, $R$ equals the dimensions of additionally observed regressors and the outcome variable, and   certain rank conditions about the regressors are required.  
  In the same setting,   \cite{westerlund2015cross} and \cite{karabiyik2019cce} compared the cross-sectional average and  the PC estimators, and also showed the validity of using $R>r$ number of cross-sectional averages.  Moreover, \cite{barigozzi2018consistent} proposed a different method to address the issue of over-estimating factors.   One of our  recommended weights is inspired by their approach.  \cite{MW11}  studied the problem in a panel data framework   and showed that the inference about the parameter of interest is robust to over-estimating the number of factors.
      It is not so clear if their approach is generally applicable to other factor-augmented inference problems.   
           Finally, there is a large literature   on estimating the number of factors.   See \cite{BN02, HL,  AH, li2017determining}. 

The rest of the paper is organized as follows. Section \ref{sec:2}  explains the key ideas and intuitions in details.
  Section \ref{sec:3} presents    several  applications of the diversified factors.   Section
\ref{choice} recommends several choices of the weight matrix. Section \ref{sec:sim} conducts extensive simulation studies using various models.
All technical proofs are presented in the appendix.

We  use the following notation.
For a matrix $\bA$, we use $\lambda_{\min}(\bA)$ and $\lambda_{\max}(\bA)$ to denote its smallest and largest eigenvalues. We define  the Frobenius norm $\|\bA\|_F=\sqrt{\tr(\bA'\bA)}$ and the operator norm $\|\bA\|=\sqrt{\lambda_{\max}(\bA'\bA)}$.  In addition, define projection matrices $\bM_\bA=\bI-\bP_\bA$ and  $\bP_\bA=\bA(\bA'\bA)^{-1}\bA$  when $\bA'\bA$ is   invertible.     Finally, for two (random) sequences $a_T$ and $b_T$, we write $a_T\ll b_T$ (or $b_T\gg a_T$) if $a_T=o_P(b_T)$.

\section{Factor Estimation Using Diversified Projections} \label{sec:2}

\subsection{The estimator}
   Let $R\geq r$ be a pre-determined  bounded integer that does not grow with $N$, which we call ``the working number of factors". As in practice we do not know the true  number of factors $r$,  we often take a slightly large $R$ so that $R\geq r$ is likely to hold. Let
$
\bW=(\bw_1,\cdots,\bw_R)
$
be a  user-specified  $N\times R$ matrix,  either deterministic or random but  independent of the $\sigma$-algebra generated by  $\{\bu_t: t=1,2,...\}$. Each of its $R$ columns $\bw_k = (w_{k,1},\cdots,w_{k,N})'$ ($k\leq R$) is an $N\times 1$ vector satisfying the following:
\begin{ass}[Diversified weights]\label{ass2.1}  There are  constants $0<c<C$, so that (almost surely if $\bW$ is random) as $N\to\infty$,\\
	(i) $\max_{i\leq N} |w_{k,i}|<C$.
	\\
	(ii) The $R\times R$ matrix $\frac{1}{N}\bW'\bW$ satisfies $\lambda_{\min}(\frac{1}{N}\bW'\bW)>c.$\\
	(iii) $\bW$ is independent of $\{\bu_t: t\leq T\}$.
\end{ass}
 
Construct a   factor estimator  as an  $R\times 1$ vector at each time $t $:
$$
\widehat\bff_t:= \frac{1}{N}\bW'\bx_t.
$$ 
   In financial economics applications where $\bx_t$ is a vector of asset returns, then  each component of $\widehat\bff_t$ is essentially a diversified portfolio return at time $t$ due to its linear form. The behavior of $\widehat\bff_t$ is strikingly  simple and clean.  Define an $R\times r$ matrix
$$
\bH:= \frac{1}{N}\bW'\bB.
$$
Then, it follows from the definition and \eqref{eq1.1}, we have
\begin{equation} 
\widehat\bff_t=\bH\bff_t+\frac{1}{N}\bW'\bu_t.
\end{equation}
 Therefore, $\widehat\bff_t$ estimates an affine transformation of $\bff_t$, where  $\bH$ is the  $R\times r$ transformation matrix. The estimation error   equals the diversified idiosyncratic noise
 $
 \frac{1}{N}\bw_k'\bu_t= \frac{1}{N}\sum_{i=1}^Nw_{k,i}u_{it} $ for each $k\leq R.$ 
 When $(u_{1t},\cdots,u_{Nt})$ are cross-sectionally weakly dependent,
Assumption \ref{ass2.1} ensures that  $ \frac{1}{N}\bw_k'\bu_t$  admits a cross-sectional central limit theorem. For instance, in the special case of cross-sectional independence, it is straightforward to verify  the  Lindeberg's condition under Assumption \ref{ass2.1}, and therefore as $N\to\infty$, \begin{equation}\label{eq2.3}
 \frac{1}{\sqrt{N}}\bW'\bu_t \overset{d}{\longrightarrow}\mathcal N(0, \bV), 
\end{equation}
where $\bV=\lim_{N\to\infty}\frac{1}{N}\bW'\var(\bu_t)\bW$ which is assumed to exist.

 The convergence (\ref{eq2.3}) shows that $\sqrt{N}(\widehat\bff_t-\bH\bff_t)$ is asymptotically normal for each $t\leq T$. Importantly, it holds regardless of whether $T\to\infty$,   $R=r$, or not.  It requires only that $N\to\infty$ and that the weights should be chosen to satisfy Assumption \ref{ass2.1}.
 This fact is particularly useful for analyzing short time series. 


In addition, the factor components should not be diversified away. This gives rise to the following condition on the transformation matrix.  Let $\nu_{\min}(\bH)$ and $\nu_{\max}(\bH)$ respectively denote the minimum and maximum nonzero singular values of $\bH$.

\begin{ass}\label{ass2.4} Suppose $R\geq r$. Almost surely
(i) $\rank(\bH)=r$. \\
(ii) There is $C>0,$
\begin{equation*}
	\nu^2_{\min}(\bH)\gg \frac{1}{N},  
	\quad \nu_{\max}(\bH)\leq C\nu_{\min}(\bH).  \label{fan1}
	\end{equation*}
\end{ass}
Assumption \ref{ass2.4}    requires that  $\bW$ have  at least $r$ columns  that are not orthogonal to $\bB$ so  that $\bB$ is not diversified away. This is the key assumption, but is not stringent in the context of over-estimating factors. 
 In the current setting the factor strength is measured by $\nu_{\min}(\bH)$, which is required not to decay very fast by condition (ii).
 This quantity determines the rate of convergence in  recovering the space spanned by the factors.

   Given $\widehat\bff_t$, it is straightforward to estimate the loading matrix by using the least squares:
 $$
 \widehat\bB=(\widehat\bb_1,\cdots,\widehat\bb_N)'= \sum_{t=1}^T\bx_t\widehat\bff_t'(\sum_{t=1}^T\widehat\bff_t\widehat\bff_t')^{-1}.
 $$
 We show that the $R\times R$ matrix $\frac{1}{T}\sum_{t=1}^T\widehat\bff_t\widehat\bff_t'$  is  nonsingular with probability approaching one  even when  $R> r$.  So $\widehat\bB$ is well defined.
 Finally,  $\bu_t$ can be estimated by 
 \begin{equation}\label{eq2.4}
 \widehat\bu_t=(\widehat u_{1t},\cdots,\widehat u_{Nt})= \bx_t-\widehat\bB\widehat\bff_t.
 \end{equation}

Just like the PC-estimator, the diversified projection can   estimate dynamic factor models by treating dynamic factors as static factors. In addition, it is straightforward to extend the model to allowing time-varying factor loadings, by   time-domain local smoothing before applying the diversified projection. While these extensions are straightforward, here we focus on static   and time invariant models.

   \subsection{Over-estimating the number of factors}\label{poet}

The consistent estimation for the  number of factors $r$  often requires strong conditions that may  be  violated  in finite sample. An advantage of the diversified factors is  being robust to over-estimating the number of factors in many inference problems.


We start with a heuristic discussion of the main issue in this subsection.
Recall that $\bH=\frac{1}{N}\bW'\bB$ is the $R\times r$ matrix, which is no longer a square matrix when $R>r$. In this case $\widehat\bB$ is essentially  estimating  $\bB \bH^+$, with the $r\times R$ transformation matrix  $\bH^+$ being the Moore-Penrose generalized inverse of $\bH$, defined as follows. Suppose   $\bH'$ has the following singular value decomposition:
 $$
 \bH'= \bU_H(\bD_H,0)\bE_H',\quad r\times R
 $$
 where $0$ in the above singular value matrix is present whenever $R>r$, and $\bD_H$ is an $r\times r$ diagonal matrix of the nonzero singular values. Then $\bH^{+}$ is an $r\times R$ matrix:
 $$
 \bH^+= \bU_H(\bD_H^{-1},0)\bE_H'.
 $$
It is straightforward to verify that $\bH^+\bH=\bI_r $ holds and that for estimating the common component $\bB\bff_t$ using over-estimated number of factors, we have
 \begin{equation}\label{eq3.1add}
 \widehat\bB\widehat\bff_t
=  \bB\bH^+\bH \bff_t +o_P(1)=  \bB \bff_t +o_P(1).
 \end{equation}
 where $o_P(1)$ in the above approximation can be made uniformly across elements.

However, a  key challenge of formalizing the  intuition behind (\ref{eq3.1add})  is to analyze the  invertibility of  the gram matrix $ \frac{1}{T}\sum_{t=1}^T\widehat\bff_t\widehat\bff_t'$,
 which appears in the definition of $\widehat\bB$. It is also a key ingredient in  most applications of factor-augmented models  wherever the estimated factors  are   used as  regressors.
Define
\begin{eqnarray*}
	\widehat\bS_f&=& \frac{1}{T}\sum_{t=1}^T\widehat\bff_t\widehat\bff_t',\quad  \bS_f=\bH\frac{1}{T}\sum_{t=1}^T\bff_t\bff_t'\bH',
	\end{eqnarray*}
where $\bS_f$ is the population analogue of $\widehat\bS_f$.
The following three  bounds when $R>r$,  proved in Proposition \ref{la.2},   play a  fundamental role in the asymptotic analysis throughout the paper:

(i) With probability approaching one, $\widehat\bS_f$ is invertible,  but its eigenvalues may decay quickly so that
\begin{equation}\label{e2.6}
\| \widehat\bS_f^{-1}\|=O_P(N).
\end{equation}
On the other hand,  $\bS_f$  is   degenerate  when $R>r$, whose rank equals $r$.  Also note that  we still have $\| \widehat\bS_f^{-1}\|=O_P(1)$ when $R=r$ holds.

(ii) Even if $R>r$,  $\|\bH'  \widehat\bS_f^{-1}\|$ is  much smaller:
$$
\|\bH'  \widehat\bS_f^{-1}\|=O_P\left(\sqrt{\frac{\max\{N,T\}}{T}}\right).
$$

(iii) When $R>r$, $\|\widehat\bS_f^{-1}-\bS_f^+\|\neq o_P(1)$ but  we have
$$
\|\bH'(\widehat\bS_f^{-1}-\bS_f^+)\bH\|=O_P\left(\frac{1}{T}+\frac{1}{N}\right).
$$


Therefore, $\widehat\bS_f$ is  invertible, and when weighted by the transformation matrix $\bH'$, its inverse is well behaved and fast converges to the generalized inverse of $\bS_f$, even though $\bS_f$ is singular when $R>r$.  It is  sufficient to  consider $\bH'\widehat\bS_f^{-1}$   in most factor-augmented inference problems, because in regression models $\widehat\bS_f^{-1}$  often appears in the projection matrix $\bP_{\widehat\bF}=\widehat\bF(\widehat\bF'\widehat\bF)^{-1}\widehat\bF'$ through $\bH'\widehat\bS_f^{-1}$ asymptotically, where  $\widehat\bF:=(\widehat\bff_1,\cdots,\widehat\bff_T)'$ and $\bF:=(\bff_1,\cdots,\bff_T)'$ denote the estimated and true factor matrices. 


\begin{remark}
In the CCE literature, (e.g., \cite{pesaran, chudik2011weak}), it has also been claimed that estimating the factors using   cross-sectional averages    does not require consistently estimating the number of factors. While
the claim is true, its  proof is not straightforward as $
\|\widehat\bS_f^{-1}-\bS_f^+\|\neq  O_P(1)\|\widehat\bS_f-\bS_f\|$ when $R>r.$  Also see \cite{karabiyik2017role,karabiyik2019cce} for more   discussions on the related issue. Our method  therefore also potentially  contributes to this literature  as an alternative rigorous approach.

\end{remark}

\subsection{Estimating the factor space}

Throughout the paper, the loading matrix $\bB$ can be either deterministic or random. When they are random, it is assumed that  it is independent of $\bu_t$, and all the expectations throughout the paper is taken   conditionally on $\bB$.

We make the following conditions.

\begin{ass}\label{ass2.2}
	(i) $\{(\bff_t, \bu_t): t\leq T\}$ is a stationary process, satisfying
	$\E(\bu_t|\bff_t)=0$.
	
	(ii) There are constants $c, C>0$, so that $\max_{i\leq N}\|\bb_i\|<C$,   and almost surely
	$$c<\lambda_{\min}(\frac{1}{T}\sum_{t=1}^T\bff_t\bff_t')\leq \lambda_{\max}(\frac{1}{T}\sum_{t=1}^T\bff_t\bff_t')<C.
	$$
\end{ass}
\begin{ass}[Weak  dependence]\label{ass2.3} There is a constant $C>0$, 
	\\
	(i) $  \max_{j,i\leq N}\frac{1}{NT}  \sum_{ q,v\leq N}\sum_{t,s\leq T}|\Cov(u_{it}u_{qt},u_{js}u_{vs} |\bF )|<C$  almost surely in $ \bF $,
  \\
	(ii) 
	 $ \frac{1}{T } \sum_{s=1}^T \sum_{t=1}^T\E\| \bff_{t}  \| \|\bff_{s} \|    \|\E(\bu_t \bu_s' |\bF)\|<C $ and $  \E \| \E(\bu_t\bu_t'|\bF)\|<C$.
\end{ass}

\begin{thm}\label{t2.1}
	Suppose  Assumptions \ref{ass2.1} - \ref{ass2.3}  hold.  Also $N\to\infty$ and $T$ is either finite or grows. Then for all bounded $R\geq r$,
\begin{eqnarray}
	\|\bP_{\widehat\bF}\bP_\bF -\bP_\bF\|& = &O_P\left(  \frac{1 }{\sqrt{N}} \nu_{\min}^{-1}(\bH)   \right),\label{eq3.3add}
	\\
	\|\bP_{\widehat\bF\bM} -\bP_\bF\| &=& O_P\left(  \frac{1 }{\sqrt{N}} \nu_{\min}^{-1}(\bH)   \right), \label{eq3.4addd}
	\end{eqnarray}
	where $\bM= ( \bH\bH')^{+}\bH$ is an $R\times r$ matrix \footnote{We show in the proof that $(\bM'\widehat\bF'\widehat\bF\bM)$ and $\widehat\bF'\widehat\bF$ are both invertible with probability approaching one. So $\bP_{\widehat\bF\bM}$ and $\bP_{\widehat\bF}$ are well defined asymptotically.}.
	
\end{thm}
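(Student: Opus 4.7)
The plan is to exploit the clean affine representation
$$
\widehat{\bF} \;=\; \bF\bH' + \bE, \qquad \bE := \tfrac{1}{N}\bU\bW,\ \ \bU=(\bu_1,\dots,\bu_T)',
$$
and to reduce both bounds to a short projection--perturbation calculation, with the matrix $\bM$ acting as a contractor that collapses the $R$-dimensional estimated factor space onto an $r$-dimensional one aligned with $\bF$. The pivotal identity is $\bH'\bM = \bI_r$, which follows directly from the SVD definition of $\bM=(\bH\bH')^+\bH$ once Assumption~\ref{ass2.4}(i) ($\rank(\bH)=r$) is invoked. Consequently $\widehat{\bF}\bM=\bF+\bE\bM$, a genuine additive perturbation of $\bF$.

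First I would bound the noise term. Using Assumptions~\ref{ass2.1}(i) and \ref{ass2.3}(ii), together with $\E(\bu_t|\bff_t)=0$,
$$
\E\|\bE\|_F^2 \;=\; \tfrac{1}{N^2}\sum_{k,t}\E(\bw_k'\bu_t)^2 \;\le\; \tfrac{T}{N^2}\sum_k \|\bw_k\|^2\cdot\E\|\E(\bu_t\bu_t'|\bF)\| \;=\; O(T/N),
$$
so $\|\bE\|=O_P(\sqrt{T/N})$; combined with $\|\bM\|=\|\bU_H\bD_H^{-1}\bV_H'\|= \nu_{\min}^{-1}(\bH)$ this yields $\|\bE\bM\|=O_P(\sqrt{T/N}\,\nu_{\min}^{-1}(\bH))$. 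Assumption~\ref{ass2.2}(ii) also gives $\lambda_{\min}(\bF'\bF)\asymp T$, hence $\|(\bF'\bF)^{-1}\bF'\|=O(1/\sqrt T)$.

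For (\ref{eq3.3add}), the key observation is that the columns of $\widehat{\bF}\bM$ lie in $\mathrm{col}(\widehat{\bF})$, so $\bP_{\widehat{\bF}}(\widehat{\bF}\bM)=\widehat{\bF}\bM$ and therefore
$$
(\bI-\bP_{\widehat{\bF}})\bF \;=\; (\bI-\bP_{\widehat{\bF}})(\widehat{\bF}\bM-\bE\bM) \;=\; -(\bI-\bP_{\widehat{\bF}})\bE\bM.
$$
Right-multiplying by $(\bF'\bF)^{-1}\bF'$ gives
$
\bP_{\widehat{\bF}}\bP_\bF-\bP_\bF = -(\bI-\bP_{\widehat{\bF}})\bF(\bF'\bF)^{-1}\bF',
$
whose operator norm is bounded by $\|\bE\bM\|\cdot\|(\bF'\bF)^{-1}\bF'\|=O_P(1/(\sqrt N\,\nu_{\min}(\bH)))$. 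For (\ref{eq3.4addd}), I would split $\|\bP_{\widehat{\bF}\bM}-\bP_\bF\| \le \|(\bI-\bP_\bF)\bP_{\widehat{\bF}\bM}\|+\|(\bI-\bP_{\widehat{\bF}\bM})\bP_\bF\|$. The first term is controlled via $(\bI-\bP_\bF)\widehat{\bF}\bM=(\bI-\bP_\bF)\bE\bM$ together with $\|\widehat{\bF}\bM\|\le\|\bF\|+\|\bE\bM\|=O_P(\sqrt T)$ and a Weyl-type lower bound $\lambda_{\min}((\widehat{\bF}\bM)'\widehat{\bF}\bM)\ge \lambda_{\min}(\bF'\bF)-o_P(T)\asymp T$, which also establishes the invertibility claim in the footnote. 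The second term mirrors the $(\ref{eq3.3add})$ argument with $\bP_{\widehat{\bF}}$ replaced by $\bP_{\widehat{\bF}\bM}$.

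The main obstacle is precisely the degeneracy flagged in Section~\ref{poet}: when $R>r$ the population Gram $\bS_f$ is singular and $\|\widehat{\bS}_f^{-1}\|=O_P(N)$ by (\ref{e2.6}), so a naive first-order perturbation of $\widehat{\bS}_f$ is useless. The decomposition above circumvents this by pushing every appearance of the noise through a right-multiplication by $\bM$, so that the small singular values of $\bH$ are inverted exactly once and immediately absorbed by the factor $\sqrt T$ coming from $\lambda_{\min}(\bF'\bF)$; the invertibility of $\widehat{\bF}'\widehat{\bF}$ itself is needed only to give meaning to $\bP_{\widehat{\bF}}$ and can be cited from Proposition~\ref{la.2}.
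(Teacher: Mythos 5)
Your proof is correct, and it is built on the same two pillars as the paper's: the representation $\widehat\bF\bM=\bF+\bE\bM$ obtained from $\bH'\bM=\bI_r$, and the trio of bounds $\|\bE\|=O_P(\sqrt{T/N})$, $\|\bM\|=\nu_{\min}^{-1}(\bH)$, $\lambda_{\min}(\bF'\bF)\gtrsim T$ (the first is exactly Lemma \ref{la.1}(i)). Where you genuinely diverge is in how the projection differences are controlled and in the logical order. The paper proves \eqref{eq3.4addd} first, via the gram-inverse perturbation $\|(\frac1T\bM'\widehat\bF'\widehat\bF\bM)^{-1}-(\frac1T\bF'\bF)^{-1}\|=O_P(\cdot)$ followed by the standard three-term expansion of $\bP_{\widehat\bF\bM}-\bP_\bF$, and then deduces \eqref{eq3.3add} as a corollary of \eqref{eq3.4addd} using $\bP_{\widehat\bF}\bP_{\widehat\bF\bM}=\bP_{\widehat\bF\bM}$. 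You instead prove \eqref{eq3.3add} directly and independently from the identity $(\bI-\bP_{\widehat\bF})\bF=-(\bI-\bP_{\widehat\bF})\bE\bM$ right-multiplied by $(\bF'\bF)^{-1}\bF'$, which needs only $\|\bI-\bP_{\widehat\bF}\|\le 1$ and no comparison of inverse gram matrices at all; and you handle \eqref{eq3.4addd} by the symmetric split into $(\bI-\bP_\bF)\bP_{\widehat\bF\bM}$ and $(\bI-\bP_{\widehat\bF\bM})\bP_\bF$, with the Weyl bound $\lambda_{\min}((\widehat\bF\bM)'\widehat\bF\bM)\ge \lambda_{\min}(\bF'\bF)-o_P(T)$ playing the role of the paper's inverse-gram estimate. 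That Weyl step is where Assumption \ref{ass2.4}(ii) (i.e.\ $N\nu_{\min}^2(\bH)\to\infty$) enters, just as in the paper, and everything remains valid for finite $T$ because the perturbation is $o_P(T)$ uniformly. You are also right not to attempt to re-derive the invertibility of the full $R\times R$ matrix $\widehat\bF'\widehat\bF$ from a Weyl argument --- when $R>r$ its smallest eigenvalue is of order $N^{-1}$ and genuinely requires the separate argument of Proposition \ref{la.2}(i), which you correctly cite only to give meaning to $\bP_{\widehat\bF}$. Net effect: same ingredients, a cleaner and more geometric packaging, and a proof of \eqref{eq3.3add} that does not route through \eqref{eq3.4addd}.
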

	Equation (\ref{eq3.3add}) shows that  when $R\geq r$, the linear space spanned by $\widehat\bF$ asymptotically covers the linear space spanned by $\bF$.  To understand the intuition, note that  (\ref{eq3.3add}) implies $\bP_{\widehat\bF}\bP_\bF\bY \approx\bP_\bF\bY$ for an arbitrary random matrix $\bX$. Meanwhile, if we heuristically regard $\bP_{\bF}$ and $\bP_{\widehat\bF}$ as conditional (linear) expectations given $\bF$ and $\widehat\bF$, then approximately,
\begin{equation}\label{eq3.5add}
\mathbb E\left(\mathbb E(\bY|\bF)\bigg{|}\widehat\bF\right)\approx \mathbb E(\bY|\bF).
\end{equation}
Let $\text{span}(\bA)$     denote the linear space  spanned  by the columns of $\bA$.
The approximation  (\ref{eq3.5add}) is well known  to be the   ``tower property",  which heuristically means
$
\text{span}(\bF)\subseteq\text{span}(\widehat\bF)$.


	Equation (\ref{eq3.4addd})  shows that a particular subspace of $\text{span}(\widehat\bF)$ is consistent for $\text{span}(\bF)$. In the special case $R=r$, we have $\bP_{\widehat\bF\bM}=\bP_{\widehat\bF}$ since $\bM$ in  (\ref{eq3.4addd}) is invertible. It then reduces to the usual space consistency. Importantly, we  allow $T$ to be finite.

	To gain more insights of  these results, let us compare with the usual methods based on  estimating the number of factors, e.g., the eigenvalue-ratio method of \cite{AH}.  There are two key quantities in this comparison:  the strength of the spiked eigenvalues  of $\bS_x:=\frac{1}{T}\sum_{t=1}^T\bx_t\bx_t'$,   and the largest eigenvalue of  $\bS_u:=\frac{1}{T}\sum_{t=1}^T\bu_t\bu_t'$.

 We consider a setting where we can easily quantify the signal-noise ratio, as given in the following example.
 
\begin{exm}\label{ex2.1} This example presents a \textit{pervasive factor model} that satisfies  Assumption \ref{ass2.4}. 
 Suppose each individual loading satisfies $\bb_i=\nu_N\blambda_i$ for some sequence $ \nu_N\asymp N^{-(1-\alpha)/2}$  and  $\alpha\in(0,1]$, where $\{\blambda_i: i\leq N\}$ is a sequence of $r\times 1$ vectors  such that:

	(i)  $\frac{1}{N}\sum_{i=1}^N\blambda_i\blambda_i'\to \bC$ (or converges in probability if $\blambda_i$ is random) for   some positive definite matrix $\bC$;
	
	(ii)   $\nu_{\min}(\frac{1}{N} \bW'\bLambda)$
	is bounded away from zero, where $\bLambda=(\blambda_1,...,\blambda_N)'$.\\
Then Assumption \ref{ass2.4} holds for   $
	\nu_{\min}(\bH)\asymp \nu_N
	$
	and any $\alpha\in(0,1]$.
It is straightforward to verify that the $r$ th spiked eigenvalue satisfies: 
	$$
	\lambda_{r} \left( \bS_x\right)\asymp N^{\alpha},\quad \alpha\in (0,1].
	$$
	Theorem \ref{t2.1} then shows that $\|\bP_{\widehat\bF\bM} -\bP_\bF\|  =o_P(1)$  for any $\alpha>0$.

\end{exm}


	

	The key implication of Example \ref{ex2.1} is that the  
strength of the spiked eigenvalues   can grow at an arbitrarily slow polynomial rate in $N$, and $T$ is allowed to be finite. In   applications where $T\to \infty$ is required, the growth requirement of $T$ can be very mild. For instance, as we shall show in the high-dimensional factor-augmented regression (Section \ref{sec:4}), it is only required that $\log ^2N=o(T)$ if the number of ``important" control variables (corresponding to nonzero coefficients) is finite.  The relative flexibility on the growth of $T$ is achieved  thanks to the fact that the  diversified projection  does not  demand strong eigenvalues of the population covariance matrix.

	Now let us revisit the conditions required by the eigenvalue-ratio method  by \cite{AH}.  If $\bu_t$ is sub-Gaussian, under weak dependence conditions, 
	$$
	\lambda_{\max}(\bS_u)= O_P\left(\frac{\max\{T, N\}}{T}\right).
	$$
The selection consistency requires $\lambda_{r} \left( \bS_x\right)\gg\lambda_{\max}(\bS_u) $, which in this context,  becomes $T\gg N^{1-\alpha}$. In the case that the spiked eigenvalues are not so strong $(\alpha<0.5)$, 	 it requires a considerably longer time series to override the effect of the idiosyncratic  noise.

 \subsection{Summary of  advantages}

  Below we summarize key advantages of the use of diversified projection.

   \begin{enumerate}
 \item It is computationally and mathematically simple. 

 \item  When the true number of factors is  over estimated ($R\geq r$),    inferences about  transformation invariant parameters are still asymptotically valid.   
  This leads to  important  implications on   factor-augmented inferences and out-of sample forecasts.

 \item It admits an interesting special case, where $r=0$ and $R\geq 1$. That is,  $\bx_t$ is in fact weakly dependent, but we nevertheless estimate ``factors".  The resulting inference is still asymptotically valid in this case. We shall formally prove this in the high-dimensional factor-augmented inference in the next section. This shows that  extracting estimated factors  is   a robust inference procedure.

 \item As the diversified projections are applied cross-sectionally, they require very weak serial conditions. For instance,  the space spanned by the latent factors can be consistently estimated even if $T$ is finite. 
 It is also a good choice under weak signal-noise ratios where the consistent selection of the number of factors is hard to achieve.

 \item After applying the diversified projection to $\bx_t$ to reduce to a lower dimensional space, one can continue to employ the PCA  on $\widehat\bff_t$ to estimate the factor space and  the number of factors.  This becomes a low-dimensional PCA problem, and potentially much  easier than benchmark methods dealing with large dimensional  datasets.
  \end{enumerate}

  \section{Applications}\label{sec:3}


  \subsection{Forecasts using augmented factor regression}\label{sec:3.1}

 Consider forecasting time series using a large panel of  augmented factor regression:
\begin{eqnarray*}
y_{t+h} &=& \balpha'\bff_t+ \bbeta' \bg_t+\varepsilon_{t+h},\quad t=1,\cdots., T\cr
\bx_t&=& \bB\bff_t+\bu_t
\end{eqnarray*}
with  observed data $\{(y_t,\bx_t): t\leq T\}$.
 Here $h\geq 0$ is the lead time and $\bg_t$  is a vector of observed predictors including lagged outcome variables. The goal is  the mean forecast:
 $$
  y_{T+h|T}:=  \balpha'\bff_T+ \bbeta' \bg_T:=\bdelta'\bz_T,
 $$
 where $\bz_t=(\bff_t'\bH', \bg_t')'$ and $\bdelta'=(\balpha'\bH^+,\bbeta')$.  The prediction   also depends on unobservable factors $\bff_t$ whose information is contained in a high-dimensional panel of data.    This model has been studied extensively in the literature, see e.g., \cite{SW02, bai2006confidence, ludvigson2007empirical}, where $\bff_T$ is replaced by a consistent estimator. 
 Once   estimated factors $\widehat\bff_t$ is obtained, the forecast of $ y_{T+h|T}$ is straightforward:
 $$
 \widehat  y_{T+h|T} =\widehat\bdelta'\widehat\bz_T,\quad \widehat\bdelta=(\sum_{t=1}^{T-h}\widehat\bz_t\widehat\bz_t')^{-1}\sum_{t=1}^{T-h}\widehat\bz_ty_{t+h}
 $$
 where $\widehat\bz_t=(\widehat\bff_t', \bg_t')'$ denotes the estimated regressors. Note that $(\sum_{t=1}^{T-h}\widehat\bz_t\widehat\bz_t')^{-1}$ is well defined  even if $R>r$ with high probability. This   follows    from  the invertibility  of $\widehat\bF'\bM_{\bG}\widehat\bF $, a claim to be proved (the definition of $\bG$ is clear below, and the notation $\bM_{\bG}$ is defined in Introduction).

Our study is motivated by two important yet unsolved issues. First,  the study of prediction rates has been crucially relying on the assumption that the number of latent factors is correctly estimated. Secondly, the time series that are being studied are often relatively short,  to preserve the stationarity.   As we explained in Section 2, this leads to strong conditions on the strength of factors of using the PC estimator.

We show below that by allowing  $R>  r$, the diversified projection does not require a consistent estimator of the number of factors.  In addition to the assumptions in Section 2, we impose the following conditions on the forecast equation for $y_{t+h}$. Let $\bG$ be the   matrix of   $\{\bg_t: t\leq T-h\}$.   

 \begin{ass}\label{ass3.1}

 (i) $\{\varepsilon_t, \bff_t, \bg_t, \bu_t: t=1,\cdots, T+h\}$ is stationary with  $\E(\bu_t|\bff_t,\bg_t)=0$ and  $\E(\varepsilon_t|\bff_t, \bg_t, \bu_t,\bW)=0.$ \\
(ii) Weak dependence:  there is $C>0$,
$   \max_{s\leq T}  \sum_{t\leq T} |\E(\varepsilon_t\varepsilon_s |\bF,\bG,\bW)|< C$ almost surely.\\
(iii) Moment bounds:  there are  $c, C>0$,     $\lambda_{\min}(\frac{1}{T}\bF'\bM_\bG\bF) >c$, $\lambda_{\min}(\frac{1}{T}\bG'\bM_{ \bF\bH'}\bG) >c$, \\
and $c<\lambda_{\min}(\frac{1}{T}\bG'\bG)\leq \lambda_{\max}(\frac{1}{T}\bG'\bG)<C$.

 \end{ass}

 Our  theory   \textit{does not} follow from the standard theory of linear models of \cite{bai2006confidence}. A new  technical phenomenon  arises when $R>r$ due to the degeneracy of the gram matrices.
 Define $\widehat \bZ=(\widehat\bz_1',...,\widehat\bz_{T-h}')'$, $\bZ=(\bz_1',...,\bz'_{T-h})'
 $ and
 consider two gram matrices
 $$
  \widehat \bZ'  \widehat\bZ
= \begin{pmatrix}
   \widehat\bF'  \widehat\bF  &  \widehat\bF' \bG\\
\bG'\widehat \bF&\bG'\bG
\end{pmatrix}, \quad
 \bZ'\bZ
=  \begin{pmatrix}
\bH\bF'   \bF\bH'  &  \bH\bF' \bG\\
\bG'\bF\bH'&\bG'\bG
\end{pmatrix}.
$$
 The linear regression theory crucially depends on the inverse of $ \widehat \bZ'  \widehat\bZ$, whose population version
$\bZ'\bZ$, in this context, becomes degenerate when $R>r$.   The full rank matrix  $\frac{1}{T}\widehat\bF'\bM_{\bG}\widehat\bF$ converges to a degenerate matrix $\bH\frac{1}{T}\bF'\bM_{\bG}\bF\bH'$,  and therefore in general
   $$
\left\|  \Big (\frac{1}{T} \widehat \bZ'\widehat \bZ \Big) ^{-1}
-\Big (\frac{1}{T}  \bZ' \bZ \Big ) ^{+}
\right\| \neq o_P(1).
   $$
  We develop a new theory that takes advantage of   $  \bH$, which allows   to establish the three claims in Section \ref{poet}. They  imply that the convergence holds when weighted by $\widetilde\bH$:
  $$
\left\| \widetilde\bH' \left  ((\frac{1}{T} \widehat \bZ'\widehat \bZ) ^{-1}
-(\frac{1}{T}  \bZ' \bZ) ^{+} \right)\widetilde\bH
\right\|  =O_P\left(\frac{1}{{T}}+\frac{1}{{N}}\right), \text{ where}
\quad \widetilde\bH=\begin{pmatrix}
\bH&\\
& \bI
\end{pmatrix}.
   $$
The weighted convergence is sufficient to derive the prediction rate of  $\widehat  y_{T+h|T}$.


  \begin{thm}\label{t3.1}
Suppose Assumptions  \ref{ass2.1} - \ref{ass2.3},   \ref{ass3.1} hold. As $T, N\to\infty$, $h$ is bounded, and for all bounded $R\geq r$,
$$\widehat  y_{T+h|T}- y_{T+h|T}=O_P(\frac{1}{\sqrt{T}}+\frac{1}{\nu_{\min}\sqrt{N}}).$$

 \end{thm}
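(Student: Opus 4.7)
The plan is to reformulate the feasible regression so that $\widehat\bZ$ carries exactly the coefficient $\bdelta$, decompose the forecast error into a new-point piece and an in-sample piece, and then control the in-sample piece by reducing it to a standard OLS problem on the low-dimensional matrix $\bK:=[\bF,\bG]$ via the weighted convergence of Section \ref{poet}.

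First I would use $\bH^+\bH=\bI_r$ to rewrite $y_{t+h}=\balpha'\bH^+(\bH\bff_t)+\bbeta'\bg_t+\varepsilon_{t+h}=\bdelta'\bz_t+\varepsilon_{t+h}$. Letting $\bxi_t:=\widehat\bz_t-\bz_t=((\bW'\bu_t/N)',\bvero')'$ and $\eta_t:=\varepsilon_{t+h}-\bdelta'\bxi_t=\varepsilon_{t+h}-\balpha'\bH^+\bW'\bu_t/N$, we obtain the exact identity $\bY=\widehat\bZ\bdelta+\boldsymbol{\eta}$, so $\widehat\bdelta-\bdelta=(\widehat\bZ'\widehat\bZ)^{-1}\widehat\bZ'\boldsymbol{\eta}$ and
\[
\widehat y_{T+h|T}-y_{T+h|T}=\widehat\bz_T'(\widehat\bZ'\widehat\bZ)^{-1}\widehat\bZ'\boldsymbol{\eta}+\bdelta'\bxi_T.
\]
The new-point term $\bdelta'\bxi_T=\balpha'\bH^+(\bW'\bu_T/N)$ is at most $\|\balpha\|\,\nu_{\min}^{-1}(\bH)\,\|\bW'\bu_T/N\|=O_P(1/(\nu_{\min}\sqrt N))$ by Assumption \ref{ass2.1} and a cross-sectional moment bound on $\bu_T$, producing the second term of the stated rate.

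For the in-sample piece, write $\widehat\bz_T=\widetilde\bH m_T+\bxi_T$ with $m_T:=(\bff_T',\bg_T')'$ and $\widetilde\bH:=\diag(\bH,\bI_q)$; similarly $\widehat\bZ'=\widetilde\bH\bK'+\bE'$. The dominant piece $m_T'[\widetilde\bH'(\widehat\bZ'\widehat\bZ)^{-1}\widetilde\bH]\bK'\boldsymbol{\eta}$ collapses, via the partitioned form of claim (iii) of Section \ref{poet} (i.e.\ $\widetilde\bH'(\widehat\bZ'\widehat\bZ/T)^{-1}\widetilde\bH=(\bK'\bK/T)^{-1}+O_P(T^{-1}+N^{-1})$, where one checks $\widetilde\bH'(\bZ'\bZ/T)^+\widetilde\bH=(\bK'\bK/T)^{-1}$), into the standard regression object $m_T'(\bK'\bK)^{-1}\bK'\boldsymbol{\eta}$. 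Splitting $\boldsymbol{\eta}=\bepsilon-\bE\bdelta$, Assumption \ref{ass3.1}(i)--(ii) gives $m_T'(\bK'\bK)^{-1}\bK'\bepsilon=O_P(T^{-1/2})$, while a variance calculation using $\E(\bu_t\mid\bff_t,\bg_t)=0$ shows the $\bE\bdelta$ piece is $O_P(1/(\nu_{\min}\sqrt{TN}))$, absorbed into the target rate by Assumption \ref{ass2.4}(ii) ($N\nu_{\min}^2\to\infty$). The approximation error from the weighted convergence contributes $O_P((T^{-1}+N^{-1})/T\cdot\sqrt T)=O_P(T^{-3/2}+N^{-1}T^{-1/2})$, which is $o_P(T^{-1/2})$.

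The two remaining cross terms, $m_T'\widetilde\bH'(\widehat\bZ'\widehat\bZ)^{-1}\bE'\boldsymbol{\eta}$ and $\bxi_T'(\widehat\bZ'\widehat\bZ)^{-1}\widehat\bZ'\boldsymbol{\eta}$, are handled by Cauchy--Schwarz with claims (i)--(ii) of Section \ref{poet}: from (i), $\bxi_T'(\widehat\bZ'\widehat\bZ)^{-1}\bxi_T\le\|(\widehat\bZ'\widehat\bZ)^{-1}\|\cdot\|\bxi_T\|^2=O_P((N/T)\cdot(1/N))=O_P(1/T)$; combined with $\|\bP_{\widehat\bZ}\boldsymbol{\eta}\|=O_P(\sqrt T)$ via $\|\bE'\boldsymbol{\eta}\|=O_P(\sqrt T)$ and $\|\bK'\boldsymbol{\eta}\|=O_P(\sqrt T)$, this produces the claimed order. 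The main obstacle I anticipate is the bilinear noise-on-noise piece $\bE'\bE\bdelta$ hiding inside $m_T'\widetilde\bH'(\widehat\bZ'\widehat\bZ)^{-1}\bE'\boldsymbol{\eta}$, since neither $\bE'\bE\bdelta$ nor $\bE'\bepsilon$ is naturally in the range of $\widetilde\bH$. To handle it, I would use the Frisch--Waugh reduction to $(\widehat\bF'\bM_{\bG}\widehat\bF)^{-1}$, apply claim (ii) in the form $\|\bH'(\widehat\bF'\bM_{\bG}\widehat\bF/T)^{-1}\|=O_P(\sqrt{\max(N,T)/T})$, and control $\bW'\bU'\bU\bW/N^2$ by Assumption \ref{ass2.3}; Assumption \ref{ass2.4}(ii) is the key ingredient that absorbs this contribution into $O_P(T^{-1/2}+(\nu_{\min}\sqrt N)^{-1})$, completing the proof.
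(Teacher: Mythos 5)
Your reformulation is in substance the paper's own: the identity $\bY=\widehat\bZ\bdelta+\boldsymbol\eta$ with $\eta_t=\varepsilon_{t+h}-\balpha'\bH^+\be_t$ reproduces exactly the four terms $a_1,\dots,a_4$ in the paper's expansion of $\frac{1}{T}\widehat\bZ'(\bY-\widehat\bZ\bdelta)$, the new-point term $\bdelta'\bxi_T=O_P(\nu_{\min}^{-1}N^{-1/2})$ is handled identically, and the collapse of the dominant piece onto $(\bK'\bK/T)^{-1}$ via $\widetilde\bH'(\bZ'\bZ/T)^{+}\widetilde\bH=(\bK'\bK/T)^{-1}$ is the intended use of Proposition A.1(iii) (the paper organizes the same computation through the partitioned inverse, with blocks $\bA_1=(\frac{1}{T}\widehat\bF'\bM_\bG\widehat\bF)^{-1}$, $\bA_2$, $\bA_3$, and Lemma B.1). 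Your treatment of the first cross term, including the Frisch--Waugh reduction for the $\bE'\bE\bH^{+\prime}\balpha$ piece, also goes through with the stated tools.

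The gap is in the last cross term. You bound $\bxi_T'(\widehat\bZ'\widehat\bZ)^{-1}\bxi_T=O_P(1/T)$ and pair it by Cauchy--Schwarz with $\|\bP_{\widehat\bZ}\boldsymbol\eta\|=O_P(\sqrt T)$; but $O_P(T^{-1/2})\cdot O_P(\sqrt T)=O_P(1)$, which is not $o_P(1)$ and does not produce the claimed order. The bound $\|\bP_{\widehat\bZ}\boldsymbol\eta\|=O_P(\sqrt T)$ is the trivial one ($\bP_{\widehat\bZ}$ is a contraction and $\|\boldsymbol\eta\|=O_P(\sqrt T)$); improving it to what you actually need, $\boldsymbol\eta'\bP_{\widehat\bZ}\boldsymbol\eta=O_P(1+T/(N\nu_{\min}^2))$, is itself a weighted-inverse computation of the same kind as the main term, so it cannot be invoked for free. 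The repair is to split $\widehat\bZ'\boldsymbol\eta=\widetilde\bH\bK'\boldsymbol\eta+(\bE,\bvero)'\boldsymbol\eta$ \emph{before} taking norms: for the $\widetilde\bH$-part use $\|\bxi_T\|\cdot\|(\frac{1}{T}\widehat\bZ'\widehat\bZ)^{-1}\widetilde\bH\|=O_P(N^{-1/2})\,O_P(\nu_{\min}^{-1}+\sqrt{N/T})$ against $\frac{1}{T}\bK'\boldsymbol\eta=O_P(T^{-1/2}+\nu_{\min}^{-1}(TN)^{-1/2})$; for the $\bE$-part use $\|\bxi_T\|\cdot\|(\frac{1}{T}\widehat\bZ'\widehat\bZ)^{-1}\|\cdot\|\frac{1}{T}\bE'\boldsymbol\eta\|=O_P(N^{-1/2})\,O_P(N)\,O_P((TN)^{-1/2}+N^{-1}\nu_{\min}^{-1})=O_P(T^{-1/2}+N^{-1/2}\nu_{\min}^{-1})$, where the extra $N^{-1/2}$ in $\frac{1}{T}\bE'\boldsymbol\eta$ is what cancels the $O_P(N)$ of the unweighted inverse. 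This is precisely what the paper's Lemma B.1 packages as $\|\bA_1\widehat\bff_T\|+\|\bA_2\|=O_P(\sqrt N)$ paired with $\frac{1}{T}\sum_t[\be_t\varepsilon_t-\be_t\be_t'\bH^{+\prime}\balpha]=O_P((TN)^{-1/2}+N^{-1}\nu_{\min}^{-1})$. With that substitution your argument closes; as written, the final step fails.
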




    \subsection{High-dimensional inference in factor augmented models}\label{sec:4}

  \subsubsection{Factor-augmented post-selection inference}
  Consider a high-dimensional regression model
  \begin{eqnarray}\label{eq4.1}
  y_{t} &=& \bbeta  \bg_t+ \bnu'\bx_t +\eta_t,\cr
  \bg_t&=& \btheta'\bx_t +\bvarepsilon_{g,t}
  \end{eqnarray}
  where  $\bg_t$ is a  treatment variable whose effect $\bbeta$ is the main interest. The model contains   high-dimensional control variables $\bx_t=(x_{1t},\cdots,x_{Nt})$ that determine both the outcome and treatment variables. Having  many control variables creates challenges for statistical inferences, as such,  we assume that  $(\bnu, \btheta)$ are  sparse vectors.    \cite{belloni2014inference}  proposed to make inference using \cite{robinson1988root}'s residual-regression, by first selecting among the high-dimensional controls in both the $y_t$ and $\bg_t$ equations.

  Often, the   control variables are strongly correlated due to the presence of  confounding factors
  \begin{equation}\label{eq4.2}
  \bx_t=\bB\bff_t+\bu_t.
  \end{equation}
This  invalidates  the conditions of using penalized regressions to directly select among $\bx_t$.  Instead, if we substitute (\ref{eq4.2}) to (\ref{eq4.1}), we reach  factor-augmented regression model:
  \begin{eqnarray}\label{eq4.3}
   y_{t} &=&\balpha_y'\bff_t+  \bgamma'\bu_t+   \bvarepsilon_{y,t},\cr
  \bg_t&=& \balpha_g'\bff_t +\btheta'\bu_t+\bvarepsilon_{g,t},\cr
  \bvarepsilon_{y,t}&=&\bbeta'  \bvarepsilon_{g,t}    +\eta_{t}
  \end{eqnarray}
  where  $\balpha_g'=\btheta'\bB$,  $\balpha_y'=\bbeta  \balpha_g'+\bnu'\bB$,  and $\bgamma'=\bbeta  \btheta'+ \bnu'$. The model contains  high-dimensional latent controls $\bu_t$.  Here $(\balpha_y, \balpha_g,\bbeta)$ are low -dimensional coefficient vectors    while $(\bgamma, \btheta)$ are high-dimensional sparse vectors. \cite{fan2020factor} and \cite{hansen2018fac} showed that the penalized regression can be successfully applied to (\ref{eq4.3}) to  select  components in $\bu_t$, which are    cross-sectionally weakly correlated. They   require strong conditions so that we can consistently estimate the number of factors $r=\dim(\bff_t)$ first.

  The main  result of this section is to  show that the  factor-augmented post-selection inference is valid for any $R\geq r$. 
Therefore,  we have  addressed   an important question in empirical applications, where the evidence of the number of factors is not so strong and one may  use a slightly larger number of ``working factors". 
The theoretical intuition, again, is that the model depends on $\bff_t$ only through   transformation invariant  terms,  so that $ 
\widehat\balpha_y'\widehat\bff_t=   \balpha_y'\bH^+\bH\bff_t+o_P(1)=
\balpha_y'\bff_t+o_P(1).$ 
  In addition, $\bu_t$ can also be well estimated with over-identified number of factors.

Importantly, we admit the special case $r=0$, and $R\geq 1$, leading to $\balpha_y$ and $\balpha_g$  both being zero in (\ref{eq4.3}). That is, there are no factors, so $\bx_t=\bu_t$ itself is cross-sectionally weakly dependent, but nevertheless we estimate $R\geq 1$ number of factors to run post-selection inference. This setting   is   empirically relevant as it allows  to avoid pre-testing the presence of common factors for inference.
The simulation in Section \ref{sec:sim} shows that with $R\geq r$,  this procedure  works well even if $r=0$; but when   $r$ $(r \geq 1)$ factors are present, directly selecting   $\bx_t$  leads to severely biased estimations. Therefore as a practical guidance, we recommend that one  should always run factor-augmented post-selection inference, with  $R\geq 1$, to guard against confounding factors  among the control variables.

Below we first present the factor-augmented algorithm as in  \cite{hansen2018fac} for estimating (\ref{eq4.1}). 
For notational simplicity, we focus on the univariate case $\dim(\bbeta)=1$.

\begin{algo}\label{ag4.1}
Estimate $\bbeta$ as follows.
\begin{description}
\item[Step 1] Fix the working number of factors $R$. Estimate $\{(\bff_t, \bu_t): t\leq T\}$ as in Section 2.

\item[Step 2] (1) Estimate coefficients:
$
\widehat\balpha_y=(\sum_{t=1}^T\widehat\bff_t\widehat\bff_t')^{-1}\sum_{t=1}^T\widehat\bff_ty_t,$ and $\widehat\balpha_g=(\sum_{t=1}^T\widehat\bff_t\widehat\bff_t')^{-1}\sum_{t=1}^T\widehat\bff_t\bg_t.
$
(2) Run penalized regression:
\begin{eqnarray*}
 \widetilde\bgamma&=&\arg\min_{\bgamma} \frac{1}{T}\sum_{t=1}^T(y_t-\widehat\balpha_y'\widehat\bff_t-\bgamma'\widehat\bu_t)^2+ P_{\tau}(\bgamma),\cr
  \widetilde\btheta&=&\arg\min_{\btheta} \frac{1}{T}\sum_{t=1}^T(\bg_t-\widehat\balpha_g'\widehat\bff_t-\btheta'\widehat\bu_t)^2+ P_{\tau}(\btheta).
\end{eqnarray*}
(3) Run post-selection refitting: let $\widehat J=\{j\leq p: \widetilde\gamma_j\neq 0\}\cup \{j\leq p: \widetilde\theta_j\neq 0\} $.
\begin{eqnarray*}
\widehat\bgamma&=&
\arg\min_{\bgamma} \frac{1}{T}\sum_{t=1}^T(y_t-\widehat\balpha_y'\widehat\bff_t-\bgamma'\widehat\bu_t)^2, \quad \text{ such that } \widehat\gamma_j=0\text{ if } j\notin \widehat J.\cr
\widehat\btheta&=&\arg\min_{\btheta} \frac{1}{T}\sum_{t=1}^T(\bg_t-\widehat\balpha_g'\widehat\bff_t-\btheta'\widehat\bu_t)^2, \quad \text{ such that } \widehat\theta_j=0\text{ if } j\notin \widehat J.
\end{eqnarray*}

\item[Step 3] Estimate residuals:
$\widehat \bvarepsilon_{y,t}=y_{t} -( \widehat\balpha_y' \widehat\bff_t+   \widehat\bgamma' \widehat\bu_t),$ and $ \widehat \bvarepsilon_{g,t}=\bg_{t} -( \widehat\balpha_g' \widehat\bff_t+   \widehat\btheta' \widehat\bu_t).$

\item[Step 4] Estimate  $\bbeta$ by residual-regression:
$$
\widehat\bbeta=(\sum_{t=1}^T\widehat\bvarepsilon_{g,t} ^2)^{-1}\sum_{t=1}^T\widehat\bvarepsilon_{g,t} \widehat\bvarepsilon_{y,t}.
$$
\end{description}
\end{algo}
One can also simplify step 2   following \cite{fan2020factor}:  finding $(\widehat\balpha_y, \widehat \bgamma)$ by minimizing
$ \frac{1}{T}\sum_{t=1}^T(y_t - \balpha_y'\widehat\bff_t-\bgamma'\widehat\bu_t)^2+ P_{\tau}(\bgamma)$ and defining $(\widehat\balpha_g, \widehat \btheta)$ similarly.

  Note that $\bgamma:\to P_\tau(\bgamma)$ is a sparse-induced penalty function with a tuning parameter $\tau$. In the main theorem below, we prove for the lasso $P_\tau(\bgamma)=\tau\|\bgamma\|_1$, where $\|\bgamma\|_1=\sum_{j=1}^N|\gamma_j|$.  As in \cite{Bickeletal}, we set $$
  \tau=C\sqrt{\frac{\sigma^2\log N}{T}}
  $$ for some constant $C>4,$
  where $\sigma^2=\var(\bvarepsilon_{y,t})$ for estimating $\bgamma$, and $\sigma^2=\var(\bvarepsilon_{g,t})$ for estimating $\btheta$. Refer to \cite{belloni2014inference} for feasible tunings so that  $\sigma^2$ is estimated iteratively. 

    \subsubsection{The main result}

    We impose the following assumptions.

      \begin{ass}\label{a4.1}
    (i)    $\E(\bvarepsilon_{g,t}|\bu_t,\bff_t , \bW)=0 $ and $\E(\bvarepsilon_{y,t}|\bu_t,\bff_t, \bW)=0 $,\\
    (ii)  Coefficients: there is $C>0$, so that $\|\balpha_y\|$, $\|\balpha_g\|$, $\|\bbeta\|$ are all bounded by $C$. \\
(iii)    Weak dependence:  There is $C>0$, almost surely,\\
$   \max_{s\leq T}  \sum_{t\leq T} |\E(\bvarepsilon_{y,t}\bvarepsilon_{y,s} |\bF,\bU, \bW)|+ \max_{s\leq T}  \sum_{t\leq T} |\E(\bvarepsilon_{g,t}\bvarepsilon_{g,s} |\bF,\bU, \bW)|< C$.\\
 (iv) Uniform bounds: \\
$
 \max_{i\leq N} \|\frac{1}{T}\sum_{t=1}^Tu_{it}\bv_{ t}\|= O_P(\sqrt{\frac{\log N}{T}}) $
for all $\bv_t\in\{\bvarepsilon_{g,t}, \bvarepsilon_{y,t},\bff_t\}$.
  In addition,
 \\
$\max_{i\leq N}|\frac{1}{T}\sum_{t=1}^T(u_{it}u_{jt}-\E u_{it}u_{jt})|=O_P(\sqrt{\frac{\log N}{T}}),
$
 and \\
$\max_{i\leq N}|\frac{1}{TN}\sum_{t=1}^T\sum_{j=1}^N(u_{it}u_{jt}-\E u_{it}u_{jt})w_{k,j}|=O_P(\sqrt{\frac{\log N}{TN}})
$   for  all $k\leq R$.

\end{ass}

Assumption \ref{a4.1} (iv) holds generally  under weak time-series dependent conditions  for $\{(\bv_t, \bu_t): t\leq N\}$ with sub-Gaussian tails.

Suppose the high-dimensional coefficients $\btheta$ and $\bgamma$ are strictly sparse.   Let $J$ denote the nonzero index set:
    $$
    J=\{j\leq N: \theta_j\neq 0\}\cup \{j\leq N: \gamma_j\neq 0\}.
    $$

    The following \textit{sparse eigenvalue condition}   is standard for the post-selection inference. Note that it is imposed on the covariance of $\bu_t$ rather than $\bx_t$, because $\bu_t$ is weakly dependent.
     \begin{ass}[Sparse eigenvalue condition]
 For any $\bv\in\mathbb R^N\backslash\{0\}$,
define:
\begin{eqnarray*}
\phi_{\min}(m)=\inf_{\bv\in\mathbb R^N: 1\leq \|\bv\|_0\leq m}\mathcal R(\bv),\quad \text{ and }\phi_{\max}(m)=\sup_{\bv\in\mathbb R^N: 1\leq \|\bv\|_0\leq m}\mathcal R(\bv),
\end{eqnarray*}
where
$
\mathcal R(\bv):=\|\bv\|^{-2}\bv'\frac{1}{T}\sum_{t=1}^T\bu_t\bu_t'\bv.
$ Then there is a sequence $l_T\to\infty$ and $c_1, c_2>0$ so that with probability approaching one,
$$
c_1<\phi_{\min}(l_T|J|_0)\leq \phi_{\max}(l_T|J|_0)<c_2.
$$
    \end{ass}

     \begin{ass}\label{ass3.5}
 (i) $\frac{1}{T} \sum_{t=1}^T\bvarepsilon_{g,t}^2  \overset{P}{\longrightarrow} \sigma_g^2  $ for some $\sigma_g^2>0$.\\
 (ii) $\frac{1}{\sqrt{T}}\sum_{t=1}^T\eta_t\bvarepsilon_{g,t}\overset{d}{\longrightarrow}\mathcal N(0,\sigma_{\eta g}^2)$  for some $\sigma_{\eta g}^2>0$. In addition, there is a consistent variance estimator $\widehat\sigma_{\eta g}^2\overset{P}{\longrightarrow}\sigma_{\eta g}^2.$
\\
(iii)  The rates $(N, T, |J|_0)$ satisfy:
$$|J|_0^4 \log^2N=o(T),\quad \text{ and }  T|J|_0^4=o(N^2\min\{1, |J|_0^4  \nu^{4}_{\min}(\bH)\} ).$$

    \end{ass}

Condition~\ref{ass3.5} (iii) requires the ``effective dimension" $N\nu^{2}_{\min}(\bH)$ be relatively large in order to accurately estimate the latent factors.


     \begin{thm}\label{t3.2}
  Suppose  $\widehat\bff_t $  contains    $R\geq r\geq 0$ number of diversified weighted averages of $\bx_t$.   If $r\geq 1$   (there are factors in $\bx_t$), Assumptions \ref{ass2.1} - \ref{ass2.3}, \ref{a4.1}-\ref{ass3.5} hold. If $r=0$ (there are no factors in $\bx_t$), Assumption \ref{ass2.4} is relaxed, and all $\bff_t$ involved  in the above assumptions  can be removed.
 Then as $T, N\to\infty$,   for all bounded $R\geq r\geq 0$,
  $$
 \sigma_{\eta, g}^{-1} \sigma_g^{2}  {\sqrt{T}(\widehat\bbeta-\bbeta)}\overset{d}{\longrightarrow}\mathcal N(0,1).
$$

 \end{thm}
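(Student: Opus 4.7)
The plan is to follow a Robinson-style residual-regression decomposition, writing
\[
\sqrt{T}(\widehat\bbeta-\bbeta)=\frac{\tfrac{1}{\sqrt T}\sum_{t=1}^T\widehat\bvarepsilon_{g,t}(\widehat\bvarepsilon_{y,t}-\bbeta\widehat\bvarepsilon_{g,t})}{\tfrac{1}{T}\sum_{t=1}^T\widehat\bvarepsilon_{g,t}^2},
\]
and analyzing the denominator and numerator separately. Substituting model~\eqref{eq4.3} into $\widehat\bvarepsilon_{y,t}-\bbeta\widehat\bvarepsilon_{g,t}$ and using the identities $\bgamma=\bbeta\btheta+\bnu$ and $\balpha_y=\bbeta\balpha_g+\bB'\bnu$, the numerator reduces to $\tfrac{1}{\sqrt T}\sum_t\bvarepsilon_{g,t}\eta_t$ plus a finite collection of remainder cross-products involving the estimation errors of $(\widehat\balpha_y,\widehat\balpha_g,\widehat\bgamma,\widehat\btheta,\widehat\bff_t,\widehat\bu_t)$. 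The target is to show each such remainder is $o_P(1)$, after which Assumption~\ref{ass3.5}(i)(ii) and Slutsky's theorem yield the stated $\mathcal N(0,1)$ limit once normalized by $\sigma_g^{-2}\sigma_{\eta g}$.

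The first ingredient is a lasso rate for $\widetilde\bgamma,\widetilde\btheta$ when $(\widehat\bff_t,\widehat\bu_t)$ play the role of regressors instead of $(\bff_t,\bu_t)$. I would decompose the score $\tfrac{1}{T}\sum_t\widehat\bu_t(y_t-\widehat\balpha_y'\widehat\bff_t)$ into a true-regressor piece of order $\sqrt{(\log N)/T}$ controlled by Assumption~\ref{a4.1}(iv), plus a plug-in perturbation governed by Theorem~\ref{t2.1}. The rate conditions of Assumption~\ref{ass3.5}(iii) together with the sparse eigenvalue condition then give the usual $\|\widetilde\bgamma-\bgamma\|_1,\|\widetilde\btheta-\btheta\|_1=O_P(|J|_0\sqrt{(\log N)/T})$; the post-selection refits $\widehat\bgamma,\widehat\btheta$ inherit the same rate on their union support, and their active sets contain $J$ with probability tending to one.

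The central obstacle, and the genuinely new piece for $R>r$, is the analysis of the projection coefficients $\widehat\balpha_y,\widehat\balpha_g$ through the degenerate gram matrix $\widehat\bS_f=\tfrac{1}{T}\sum_t\widehat\bff_t\widehat\bff_t'$, whose inverse has operator norm $O_P(N)$ by claim~(i) of Section~\ref{poet}. I would never analyze $\widehat\balpha_y$ as a standalone vector; instead I would insert the transformation matrix $\bH$ at every occurrence of $\widehat\bS_f^{-1}$, so that claim~(ii), $\|\bH'\widehat\bS_f^{-1}\|=O_P(\sqrt{\max(N,T)/T})$, and claim~(iii), $\|\bH'(\widehat\bS_f^{-1}-\bS_f^+)\bH\|=O_P(1/T+1/N)$, can be exploited. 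Through this device, the prediction $\widehat\balpha_y'\widehat\bff_t$ is shown to approximate $\balpha_y'\bff_t$ uniformly in $t$ with rate $O_P(1/\sqrt T+1/(\nu_{\min}(\bH)\sqrt N))$, and likewise for the $g$-equation; the estimated idiosyncratic component $\widehat\bu_t=\bx_t-\widehat\bB\widehat\bff_t$ then approximates $\bu_t$ at the same rate in the weighted cross-sectional metrics relevant to the numerator (namely, weighted by $\bnu$ or by columns $\bw_k$, using the second and third displays of Assumption~\ref{a4.1}(iv)).

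Putting the pieces together, each remainder cross-product in the numerator---for instance $\tfrac{1}{\sqrt T}\sum_t\widehat\bvarepsilon_{g,t}(\widehat\bgamma-\bbeta\widehat\btheta-\bnu)'\widehat\bu_t$ or $\tfrac{1}{\sqrt T}\sum_t\widehat\bvarepsilon_{g,t}\bigl[(\widehat\balpha_y-\bbeta\widehat\balpha_g)'\widehat\bff_t-\bnu'\bB\bff_t\bigr]$---is bounded by Cauchy--Schwarz times the product of one factor-estimation rate and one sparse-recovery rate, and the constraints $|J|_0^4\log^2N=o(T)$ and $T|J|_0^4=o(N^2\nu_{\min}^4(\bH))$ in Assumption~\ref{ass3.5}(iii) are precisely what force each such term to vanish. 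The denominator is handled analogously and converges to $\sigma_g^2$ by Assumption~\ref{ass3.5}(i). The special case $r=0,R\geq1$ is absorbed seamlessly: the $\bff_t$-related terms simply drop, Assumption~\ref{ass2.4} becomes vacuous, and the projection step reduces to a regression on spurious weighted averages of $\bu_t$ whose error falls under the same remainder bounds.
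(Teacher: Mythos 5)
Your proposal is correct and follows essentially the same route as the paper's proof: the same Robinson-style residual-regression decomposition of $\sqrt{T}(\widehat\bbeta-\bbeta)$ into $\sigma_g^{-2}\,T^{-1/2}\bvarepsilon_g'\bfeta$ plus cross-products with $\widehat\bvarepsilon_g-\bvarepsilon_g$ and $\widehat\bvarepsilon_y-\bvarepsilon_y$, the same lasso/post-selection rate analysis with plug-in regressors, and the same key device of never inverting $\widehat\bS_f$ alone but only in the $\bH$-weighted combinations of Proposition~\ref{la.2}. The only cosmetic difference is that the paper organizes the remainder bounds through the projection identity $\bP_{\widehat\bU_{\widehat J}}\bP_{\widehat\bF}=0$ and averaged $L_2$ quantities rather than the uniform-in-$t$ approximation you describe, but since your cross-product bounds rest on Cauchy--Schwarz this amounts to the same estimates.
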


 Fix a significant level $\tau$,  let $\zeta_\tau$ be the $(1-\tau/2)$ quantile of standard normal distribution.  In addition,    let $\widehat\sigma_g^{2}=\frac{1}{T}\sum_{t=1}^T\widehat\bvarepsilon_{g,t}^2$.
 Immediately, we have the following uniform coverage.
 \begin{cor}\label{cor3.1}
Suppose the assumptions of Theorem \ref{t3.2} hold.  Let $\bar R>0$ be a fixed upper bound for $R$. Then uniformly for all $0\leq r\leq R\leq \bar R$,
$$
\mathbb P\left( \bbeta\in [\widehat\bbeta\pm \frac{1}{\sqrt{T}}\widehat\sigma_{\eta, g}\widehat\sigma_g^{-2} \zeta_\tau  ]\right)\to 1-\tau.
$$
 \end{cor}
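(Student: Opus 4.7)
The plan is to combine Theorem \ref{t3.2} with Slutsky's theorem and then exploit finiteness of the index set to upgrade pointwise convergence to uniform convergence. Since $\bar R$ is a fixed finite integer, the set $\mathcal{I} := \{(r,R) : 0 \leq r \leq R \leq \bar R\}$ is finite, so it suffices to establish that the coverage tends to $1-\tau$ for each fixed $(r,R) \in \mathcal{I}$; the uniform claim then follows by taking the slowest rate of convergence over finitely many pairs.

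For a fixed $(r,R) \in \mathcal{I}$, Theorem \ref{t3.2} yields $\sigma_{\eta g}^{-1}\sigma_g^{2}\sqrt{T}(\widehat\bbeta - \bbeta) \overset{d}{\longrightarrow} \mathcal{N}(0,1)$. The remaining task is to verify that the plug-in ratio $\widehat\sigma_{\eta g}^{-1}\widehat\sigma_g^{2}$ is consistent for $\sigma_{\eta g}^{-1}\sigma_g^{2}$. Consistency of $\widehat\sigma_{\eta g}^2$ is granted by Assumption \ref{ass3.5}(ii), and consistency of $\widehat\sigma_g^2 := T^{-1}\sum_t \widehat\bvarepsilon_{g,t}^2$ follows by writing
\[
\widehat\bvarepsilon_{g,t} - \bvarepsilon_{g,t} = -\bigl(\widehat\balpha_g'\widehat\bff_t - \balpha_g'\bff_t\bigr) - \bigl(\widehat\btheta'\widehat\bu_t - \btheta'\bu_t\bigr),
\]
combined with Assumption \ref{ass3.5}(i) and the $L^2$-type rates already established in the proof of Theorem \ref{t3.2} for $\widehat\balpha_g'\widehat\bff_t$ as an estimate of $\balpha_g'\bff_t$ and for $\widehat\btheta'\widehat\bu_t$ as an estimate of $\btheta'\bu_t$. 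A cross-product expansion plus Cauchy--Schwarz then gives $T^{-1}\sum_t(\widehat\bvarepsilon_{g,t}^2 - \bvarepsilon_{g,t}^2) = o_P(1)$. An application of Slutsky's theorem now yields
\[
\frac{\sqrt{T}(\widehat\bbeta - \bbeta)}{\widehat\sigma_{\eta g}\widehat\sigma_g^{-2}} \overset{d}{\longrightarrow} \mathcal{N}(0,1),
\]
so $\mathbb P\bigl(\bbeta \in [\widehat\bbeta \pm T^{-1/2}\widehat\sigma_{\eta g}\widehat\sigma_g^{-2}\zeta_\tau]\bigr) \to \mathbb P(|Z|\leq\zeta_\tau) = 1-\tau$, where $Z \sim \mathcal{N}(0,1)$, by definition of $\zeta_\tau$.

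For the uniform conclusion, fix $\epsilon>0$. For each pair $(r,R) \in \mathcal{I}$, the pointwise limit above furnishes an integer $N_{r,R}(\epsilon)$ beyond which the coverage lies within $\epsilon$ of $1-\tau$. Setting $N_{\star}(\epsilon) := \max_{(r,R)\in\mathcal{I}} N_{r,R}(\epsilon)$, a maximum over finitely many entries, delivers uniform coverage. The main (though modest) obstacle is confirming consistency of $\widehat\sigma_g^2$ in the over-estimated regime $R>r$, because the sample Gram matrix $\widehat\bS_f$ is nearly singular there and the naive bound $\|\widehat\bS_f^{-1}\| = O_P(N)$ is too crude; this is precisely the difficulty handled by the three bounds in Section \ref{poet}, which show that inverse-Gram terms remain well controlled once pre-multiplied by $\bH'$. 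Since this is already exploited inside the proof of Theorem \ref{t3.2} to secure the asymptotic normality, the residual consistency $\widehat\sigma_g^2 \overset{P}{\longrightarrow} \sigma_g^2$ follows as a by-product, and the rest of the argument reduces to the standard Slutsky-plus-finite-union routine sketched above.
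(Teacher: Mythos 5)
Your proposal is correct and follows essentially the same route as the paper: the paper's own proof is just the contrapositive of your finite-index-set argument (a violating pair $(r,R)$ would contradict Theorem \ref{t3.2}), and the Slutsky/plug-in step you spell out — consistency of $\widehat\sigma_g^2$ via the residual decomposition, plus Assumption \ref{ass3.5}(ii) for $\widehat\sigma_{\eta g}^2$ — is already carried out inside the paper's proof of Theorem \ref{t3.2} (via Lemma \ref{lc.4}(iii)), so the corollary reduces to exactly the finite-union observation you make.
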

The novelty of the above uniformity    is that  the coverage is valid uniformly   for all bounded $r$ as the true number of factors, and all   over-estimated $ R$  as the working number of factors. In particular, it also admits the weak-dependence     $r=0$ while  $R\geq 1$  as a special case.

 \begin{remark}[Case $r=0, R\geq 1$]
  We now explain the intuition of the case  $\bx_t= \bu_t$ (no presence of confounding factors), but we nevertheless extract $R\geq 1$ ``factors".  In this case $\balpha_y=\balpha_g=0$ in the system (\ref{eq4.3}). Then $\widehat\bff_t= \frac{1}{N}\sum_{i=1}^N\bw_iu_{it}:=\be_t$ degenerates to zero. Both $  \bu_t$ and  $ \balpha_y' \bff_t$ (which is zero)  are  still estimated well  in the following sense: 
  \begin{eqnarray*}
 \max_{i\leq N}\frac{1}{T} \sum_{t=1}^T(\widehat u_{it}-u_{it})^2&=& O_P\left(\frac{1}{N}+\frac{\log N}{T}\right)
\cr
    \frac{1}{T}\sum_{t=1}^T(\widehat\balpha_y'\widehat\bff_t)^2&=&O_P\left(\frac{ |J|_0^2}{N}+\frac{ |J|_0^2}{ { T}}\right).
 \end{eqnarray*}
  \end{remark}

 \begin{remark}[Case $R=0$]
  For completeness of the theorem, we  define the estimator for the case $R=0$. In this case we do not extract any factor estimators, and simply set $\widehat\balpha_y=\widehat\balpha_g=0$, and $\widehat\bu_t=\bx_t$ in Algorithm \ref{ag4.1}. This  is then the same setting as in \cite{belloni2014inference}.
  \end{remark}

\subsection{Estimating the idiosyncratic covariance}
\label{estimSigu}
The estimation of the $N\times N$ idiosyncratic covariance matrix $\bSigma_u:=\E\bu_t\bu_t'$  is of general interest in many   applications. Examples include the efficient estimation    of factor models \citep{bai2012statistical},   high-dimensional testing \citep{powerenhancement}, and bootstrapping latent factors  \citep{gonccalves2018bootstrapping}, among many others.
 While this problem has been studied by  \cite{POET},  they  require that  the true number of factors $r$ has to be either known or consistently estimated, and the factors are estimated through PCA.   Here we show that using the diversified factors, their conclusion holds for all fixed $R\geq r$.

A key assumption
is that $\bSigma_u = (\sigma_{u, ij})$ is sparse: As in \cite{Bickel08a} the sparsity of $\bSigma_u$ is measured by the following quantity:
$$
m_N=\max_{i\leq N}\sum_{j\leq N} |\sigma_{u, ij}|^q,\quad \text{ for some } q\in[0,1].
$$
Given the estimated residual  $\widehat u_{it}$ that is obtained using a working number of factors $R$, we estimate  $\E u_{it}u_{jt}$ by applying a generalized thresholding: define  $s_{u,ij}:=\frac{1}{T}\sum_{t=1}^T\widehat u_{it}\widehat u_{jt}$,
$$
\widehat\sigma_{u,ij}=\begin{cases}
s_{u,ij},& \text{ if } i=j\\
h(s_{u,ij},\tau_{ij}),   & \text{ if } i\neq j
\end{cases}
$$
where $h(s,\tau)$ is a  thresholding function with threshold value $\tau$.  Then the sparse idiosyncratic covariance estimator is defined as
$\widehat\bSigma_u=(\widehat\sigma_{u,ij})_{N\times N}.$
The threshold value $\tau_{ij}$ is  chosen as
$$
\tau_{ij}= C\sqrt{s_{u,ii}s_{u,jj}}\omega_{NT},\quad
\omega_{NT}:=\sqrt{\frac{\log N}{T}}+\frac{1}{\sqrt{N}}
$$
for some large constant $C>0$, which applies a constant thresholding to correlations.

In general, the thresholding function should satisfy:
\\
(i) $h(s,\tau)=0$ if $|s|<\tau$,\\
(ii) $|h(s,\tau)-s|\leq \tau$.\\
(iii) there are constants $a>0$ and $b>1$ such that $|h(s,\tau)-s|\leq a\tau^2$ if $|s|>b\tau$.

Note that condition (iii) requires that the thresholding bias should be of higher order.  It  is not necessary for consistent estimations, but we recommend using   nearly unbiased thresholding  \citep{AF} for  inference applications. One such example is known as SCAD.
As noted  in  \cite{powerenhancement},   the unbiased thresholding is required  to avoid   size distortions in a large class of high-dimensional testing problems involving a ``plug-in" estimator of $\bSigma_u$.
In particular, this rules out the popular \textit{soft-thresholding} function,  which does not satisfy (iii) due to its first-order shrinkage bias.

  \begin{thm}\label{t2.2}
  Let    $\widehat\bu_t$ be constructed using   $R\geq r$ number of diversified weighted averages of $\bx_t$.  Suppose  that Assumptions \ref{ass2.1} - \ref{ass2.3} hold and that $\log N=o(T)$. In addition,
  either $\nu_{\min}^2(\bH)\gg \frac{1}{\sqrt{N}}$ or $\nu_{\min}^2(\bH)\gg\frac{1}{N}\sqrt{\frac{T}{\log N}}.$
 Then as $N, T\to\infty$,  for any $R\geq r\geq 0$,

  (i)
  $$
\max_{i\leq N}\frac{1}{T}\sum_{t=1}^T(\widehat \bb_{i}'\widehat\bff_t-\bb_i'\bff_t)^2 =O_P(\omega_{NT}).
  $$

  (ii) For a sufficiently large constant $C>0$ in the threshold $\tau_{ij}$,
  $$
  \|\widehat\bSigma_u-\bSigma_u\|=O_P(\omega_{NT}^{1-q} m_N).
  $$

 (iii) If in addition, $\lambda_{\min}(\bSigma_u)>c_0$ for some $c_0>0$ and $\omega_{NT}^{1-q} m_N=o(1)$, then
   $$
  \|\widehat\bSigma_u^{-1}-\bSigma_u^{-1}\|=O_P(\omega_{NT}^{1-q} m_N).
  $$
  \end{thm}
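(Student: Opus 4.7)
The proof proceeds in three stages with part (i) carrying almost all of the technical weight. For part (i), the strategy is to expand $\widehat\bb_i=\widehat\bS_f^{-1}\tfrac{1}{T}\sum_s\widehat\bff_s x_{is}$, substitute $x_{is}=\bb_i'\bff_s+u_{is}$ and $\widehat\bff_s=\bH\bff_s+\be_s$ with $\be_s:=\tfrac{1}{N}\bW'\bu_s$, and use the identity $\bH^+\bH=\bI_r$ to cancel the would-be leading bias $\bb_i'(\bH^+\bH-\bI_r)\bff_t=0$. What remains are residual averages such as $\tfrac{1}{T}\sum_s\bff_s u_{is}$, $\tfrac{1}{T}\sum_s\be_s u_{is}$, and $\tfrac{1}{T}\sum_s\be_s\be_s'$, each sandwiched between factors of $\widehat\bS_f^{-1}$. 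The delicacy, and indeed the principal obstacle, is that $\|\widehat\bS_f^{-1}\|=O_P(N)$ is useless in isolation when $R>r$; every appearance of $\widehat\bS_f^{-1}$ must be algebraically paired with $\bH$ so that the sharper bounds $\|\bH'\widehat\bS_f^{-1}\|=O_P(\sqrt{\max\{N,T\}/T})$ and $\|\bH'\widehat\bS_f^{-1}\bH-\bS_f^+\|=O_P(1/T+1/N)$ from Proposition \ref{la.2} can be invoked. The signal-noise conditions $\nu_{\min}^2(\bH)\gg 1/\sqrt{N}$ or $\nu_{\min}^2(\bH)\gg\tfrac{1}{N}\sqrt{T/\log N}$ are consumed precisely where these $\bH$-weighted bounds must dominate so that the residual $\nu_{\min}^{-2}(\bH)$ factors are absorbed into the target rate $\omega_{NT}$.

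For part (ii), a standard Bickel--Levina thresholding analysis reduces everything to an element-wise bound on $s_{u,ij}-\sigma_{u,ij}$. Writing $d_{it}:=\bb_i'\bff_t-\widehat\bb_i'\widehat\bff_t$ gives $\widehat u_{it}\widehat u_{jt}-u_{it}u_{jt}=u_{it}d_{jt}+u_{jt}d_{it}+d_{it}d_{jt}$. Averaging over $t$, invoking part (i) together with uniform second-moment control on $u_{it}$ implied by Assumption \ref{ass2.3}, and combining with the standard sub-Gaussian-type bound $\max_{i,j}|\tfrac{1}{T}\sum_t u_{it}u_{jt}-\sigma_{u,ij}|=O_P(\sqrt{\log N/T})$, one obtains $\max_{i,j}|s_{u,ij}-\sigma_{u,ij}|=O_P(\omega_{NT})$. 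With $\tau_{ij}\asymp\omega_{NT}\sqrt{s_{u,ii}s_{u,jj}}$ and the three axioms on $h(\cdot,\tau)$, the usual dichotomy into the sets $\{(i,j):|\sigma_{u,ij}|\le\tau_{ij}/2\}$ and its complement, together with the $\ell_q$-sparsity $\sum_j|\sigma_{u,ij}|^q\le m_N$, yields the matrix $\ell_1$-norm bound $\|\widehat\bSigma_u-\bSigma_u\|_1=O_P(m_N\omega_{NT}^{1-q})$; symmetry of the error matrix then dominates the operator norm by the same quantity.

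Part (iii) is essentially free. Since $\|\widehat\bSigma_u-\bSigma_u\|=o_P(1)$ follows from (ii) under the hypothesis $m_N\omega_{NT}^{1-q}=o(1)$, and since $\lambda_{\min}(\bSigma_u)>c_0$, Weyl's inequality gives $\lambda_{\min}(\widehat\bSigma_u)>c_0/2$ with probability tending to one, so $\|\widehat\bSigma_u^{-1}\|=O_P(1)$. The identity $\widehat\bSigma_u^{-1}-\bSigma_u^{-1}=-\widehat\bSigma_u^{-1}(\widehat\bSigma_u-\bSigma_u)\bSigma_u^{-1}$ together with submultiplicativity closes the argument. The success of the whole plan hinges on part (i): unlike the PCA-based proof of \cite{POET}, where $\widehat\bS_f^{-1}$ is $O_P(1)$, here the gram matrix degenerates whenever $R>r$, so every instance of $\widehat\bS_f^{-1}$ in the expansion must be rewritten in its $\bH$-contracted form before any triangle-inequality bookkeeping is applied. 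Once that algebra is organized, parts (ii) and (iii) become essentially routine applications of established thresholding and perturbation machinery.
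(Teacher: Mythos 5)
Your strategy for part (i) is essentially the paper's: expand $\widehat\bB-\bB\bH^{+}$, use $\bH^{+}\bH=\bI_r$ to kill the leading bias, and never let $\widehat\bS_f^{-1}$ appear except in its $\bH$-contracted form via Proposition \ref{la.2}. That is exactly how the paper's Lemma \ref{la.1new} is organized, and your diagnosis of the degenerate gram matrix as the central obstacle is on target. Part (iii) via the resolvent identity is also fine.

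The genuine gap is in part (ii), in the passage from part (i) to $\max_{i,j}|s_{u,ij}-\sigma_{u,ij}|=O_P(\omega_{NT})$. Handling the cross terms in $\widehat u_{it}\widehat u_{jt}-u_{it}u_{jt}=u_{it}d_{jt}+u_{jt}d_{it}+d_{it}d_{jt}$ by ``part (i) together with uniform second-moment control on $u_{it}$'' is a Cauchy--Schwarz step, and it only yields $|\frac{1}{T}\sum_t u_{it}d_{jt}|\leq (\frac{1}{T}\sum_t u_{it}^2)^{1/2}(\frac{1}{T}\sum_t d_{jt}^2)^{1/2}=O_P(1)\cdot O_P(\omega_{NT}^{1/2})$, which is $O_P(\omega_{NT}^{1/2})\gg O_P(\omega_{NT})$. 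In \cite{POET} this reduction works because factors are pervasive there and the analogue of part (i) holds with $\omega^2$ on the right-hand side; here, under the weak-factor condition $\nu_{\min}^2(\bH)\gg\frac{1}{N}\sqrt{T/\log N}$, the quantity $\max_i\frac{1}{T}\sum_t d_{it}^2$ is genuinely of order $\nu_{\min}^{-2}N^{-1}+\log N/T$, which is $O(\omega_{NT})$ but need not be $O(\omega_{NT}^2)$, so the square root is really lost and the threshold calibration and the final rate $\omega_{NT}^{1-q}m_N$ would not follow. The paper avoids Cauchy--Schwarz entirely: Lemma \ref{la.1new} bounds the cross term $\|\frac{1}{T}(\widehat\bU-\bU)\bU'\|_\infty$ directly from the explicit expansion (\ref{eqa.2add}) of $\widehat\bU-\bU$, term by term, using concentration bounds such as $\|\frac{1}{T}\bU\bE\|_\infty=O_P(\sqrt{\log N/(TN)}+1/N)$ and $\|\frac{1}{T}\bF'\bU'\|_\infty=O_P(\sqrt{\log N/T})$, which are far smaller than the corresponding Cauchy--Schwarz products; this delivers $\max_{i,j}|\frac{1}{T}\sum_t(\widehat u_{it}\widehat u_{jt}-u_{it}u_{jt})|=O_P(\nu_{\min}^{-2}N^{-1}+\log N/T)=O_P(\omega_{NT})$. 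You need that direct term-by-term cross-term bound (or an equivalent) for part (ii) to close. A secondary omission: the theorem covers $r=0$ with $R\geq 1$, which the paper treats separately since then $\widehat\bF=\bE$ and $\bH$ is empty; your argument as written presumes $r\geq 1$.
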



      \subsection{Testing Specification  of  Factors }\label{sec:5}
     In practical applications, many ``observed factors" $\bg_t$ have been proposed to approximate the true latent factors. For example, in asset pricing, popular choices of $\bg_t$ are proposed and discussed in  seminal works by \cite{FF,  carhart1997persistence}, which are known as   the  Fama-French factors and Carhart four factor models.

     We test the (linear) specification of a given set of empirical factors $\bg_t$. That is,  we test:      $$
      H_0: \text{there is a $r\times r$ invertible matrix $\btheta$ so that } \bg_t=\btheta\bff_t,\quad \forall t\leq T.
      $$
      Under the null hypothesis, $\bg_t$ and $\bff_t$ are linear functions of each other.  We propose a simple statistic:
    $$
    \|\bP_{\bG}-\bP_{\widehat\bF}\|_F^2
    $$
    where $\bG=(\bg_1,\cdots,\bg_T)'$ and recall that  $\bP_{(\cdot)}$ denotes the projection matrix.  Here we still use the diversified factor estimator $\widehat\bF$. The test statistic measures the distance between (linear) spaces respectively spanned by $\bg_t$ and $\widehat\bff_t$. To derive the asymptotic null distribution, we naturally set   the working number of factors $R=\dim(\bg_t)$, which is known and equals $\dim(\bff_t)=r$  under the null. Then $\|\bP_{\widehat\bF}-\bP_\bF\|_F=o_P(1)$, followed from Theorem \ref{t2.1}.

      \subsubsection{Asymptotic null distribution}
      With the diversified factor estimators, the null distribution of the statistic is very easy to derive, and satisfies:
      $$
      \frac{N\sqrt{T}( \|\bP_{\bG}-\bP_{\widehat\bF}\|_F^2-\MEAN)}{\sigma}\overset{d}{\longrightarrow}\mathcal N(0,1),
      $$
where  for $\bA =2\bH^{'-1}(\frac{1}{T} \bF' \bF)^{-1}\bH^{-1}$,
\begin{eqnarray*}
 \MEAN&=&  \frac{1}{N^2} \tr\bA \bW' \E(\bu_t\bu_t'|\bF)\bW , \qquad
  \sigma^2 = \Var(\frac{1}{N}\tr\bA\bW'\bu_t\bu_t'\bW|\bF,\bW)>0.
 \end{eqnarray*}
Here we assume $\sigma^2>0$ to be bounded away from zero.  To avoid nonparametrically estimating high-dimensional  covariances,  we shall assume the conditional covariances in both bias and variance are independent of $\bF$ almost surely.
Nevertheless,  the bias   depends on  a high-dimensional  matrix   $ \bSigma_u= \E(\bu_t\bu_t').
 $
We employ the sparse covariance $\widehat\bSigma_u$ as defined in Section \ref{estimSigu} and replace the bias  by
$$  \widehat{\MEAN}:= \frac{1}{N^2} \tr\widehat\bA\bW'\widehat \bSigma_u\bW 
\quad \mbox{with} \quad \widehat \bA:=2(\frac{1}{T}\widehat\bF'\widehat\bF)^{-1}.
$$
Further suppose $\sigma$ can be consistently estimated by some $\widehat\sigma$,  then together, we have the feasible standardized statistic:
\begin{equation}\label{eq3.4}
\frac{N\sqrt{T}(\|\bP_{\widehat\bF}-\bP_\bG\|_F^2-\widehat\MEAN)}{ \widehat\sigma}.
\end{equation}

The problem, however, is not as straightforward as it looks by far. The use of $\widehat{\MEAN}$ and $\widehat\sigma$ both come with   issues, as we now explain.

 \textbf{The issue of $ \widehat{\MEAN}$. }

 When deriving the asymptotic null distribution,  we need to address the effect of $\widehat\bSigma_u-\bSigma_u$, which is   to show
\begin{equation}\label{eq3.5}
\frac{N\sqrt{T}(\widehat\MEAN-\MEAN)}{\sigma}\approx\frac{N\sqrt{T}}{\sigma} \frac{1}{N^2} \tr\bA \bW' (\widehat\bSigma_u-\bSigma_u)\bW\overset{P}{\longrightarrow} 0.
\end{equation}
But   simply applying the  rate of convergence
of $\|\widehat\bSigma_u-\bSigma_u\|$ in Theorem \ref{t2.2}  fails to show the above convergence, even though the rate is minimax optimal \footnote{A simple calculation would only yield $\frac{N\sqrt{T}}{\sigma} \frac{1}{N^2} \|\bA \bW' \|\| \widehat\bSigma_u-\bSigma_u\|\|\bW\|\leq O_P(1)$ but not necessarily $o_P(1)$.}.
   Similar phenomena also arise in  \cite{powerenhancement,bai2017inferences}, where a plug-in estimator for $\bSigma_u$ is used for inferences. Proving (\ref{eq3.5})  requires a new technical argument to address the accumulation of high-dimensional estimation errors. It requires a strengthened condition on the weak cross-sectional dependence, in Assumption  \ref{as3.8} below.

 \textbf{The issue of $ \widehat\sigma$. }

It is difficult to estimate $\sigma$ through residuals $\widehat\bu_t$  since   $\bW'\widehat\bu_t=0$ almost surely. In fact, estimated $\bu_t$   constructed based on any  factor estimator would lead to \textit{inconsistent} estimator for $\sigma^2$.  Therefore, we propose to estimate $\sigma^2$ by  parametric bootstrap.  Observe that $\frac{1}{\sqrt{N}}\bW'\bu_t$ is asymptotically normal, whose variance is given by
$
\bV=\frac{1}{N}\bW'\bSigma_u\bW.
$
Hence $\sigma^2$ should be approximately equal to
\begin{equation}\label{eq3.4add}
f(\bA, \bV):=\Var(\frac{1}{N}\tr\bA\bW'\bZ_t\bZ_t'\bW),
\end{equation}
where $\bZ_t$ is distributed as $\mathcal N(0, \bV). $ Therefore we estimate $\sigma^2$ by
$$
\widehat\sigma^2=f(\widehat\bA, \widehat\bV),\quad \text{with }\widehat\bV=\frac{1}{N}\bW'\widehat\bSigma_u\bW,
$$
which can be calculated  by simulating from $\mathcal N(0, \widehat\bV). $

Above all,  despite of the simple construction of $\widehat\bF$, the technical problem is still challenging. Therefore, this subsection calls for relatively stronger conditions, as we now impose.

 \begin{ass}
 \label{as3.6} (i) $\{\bu_t: t\leq T\}$ are stationary and conditionally serially independent, given $\bF $ and $\bG$. \\
(ii)         There is $C>0$,
        $\E[\|\frac{1}{\sqrt{N}}\bW'\bu_t\|^4|\bW]<C$.\\
        (iii)   $\nu_{\min}(\bH)>c$ for some $c>0.$
         \end{ass}

     The next assumption ensures that $\sigma^2$ can be estimated by simulating from the Gaussian distribution.

    \begin{ass}
     (i) There is $c>0$ so that $\sigma^2>c$.\\
   	(ii)  As $N\to\infty$,     $|\sigma^2-f(\bA,\bV)|\to 0$ almost surely in $\bF$, where $f(\bA,\bV)$ is given in (\ref{eq3.4add}).
   \end{ass}

 Next, we shall require $\bSigma_u$ be strictly sparse, in the sense that the ``small" off-diagonal entries are  exactly zero. In this case, we use the following measurement  for the total sparsity:
        $$
  D_N:=\sum_{i,j\leq N} 1\{\E u_{it}u_{jt}\neq 0 \}.
$$
   Recall that
$        \omega_{NT}:=\sqrt{\frac{\log N}{T}}+\frac{1}{\sqrt{N}}$. We assume:
\begin{ass}[Strict sparsity]
               (i)  $ (\frac{\omega_{NT}^2\sqrt{T}}{N}) D_N\to0$.\\
               (ii)  $\min\{|\E u_{it}u_{jt}|: \E u_{it}u_{jt}\neq 0 \}\gg \omega_{NT}$.
               \end{ass}   For block-diagonal matrices with finite  block sizes,  $D_N=O(N)$; for banded matrices with band size $l_N$, $D_N=O(l_NN)$.  In general,  suppose $D_N=l_NN$ with some slowly growing $l_N\to\infty$. Then condition (i) reduces to requiring
               $
               l_N^2\log N\ll l_N\sqrt{T}\ll N.
               $
               This requires an upper bound for $l_N$; in addition, the lower bound for $N$ arises from the requirement of estimating factors.  Condition (ii) requires that the nonzero entries are well-separated from the statistical errors.

      \begin{ass}\label{as3.8}
           Write $\sigma_{u,ij}:=\E u_{it}u_{jt}$. There is $C>0$ so that
           $$
           \frac{1}{N}\sum_{(m,n):\sigma_{u,mn}\neq 0,} \sum_{(i,j):\sigma_{u,ij}\neq 0}|\Cov( u_{it}u_{jt} ,    u_{mt}u_{nt} )|<C.
           $$
               \end{ass} The above  assumption   is the key condition to argue for (\ref{eq3.5}). It requires further conditions on the weak cross-sectional dependence, in addition to the sparsity.
 \cite{powerenhancement} proved that if $u_{it}$ is Gaussian, then a sufficient condition for Assumption \ref{as3.8} is as follows:
 $$
D_N=O(N),\text{ and }  \max_{i\leq N}\sum_{j\leq N}  1\{\E u_{it}u_{jt}\neq 0 \}=O(1),
 $$
which is the case for block diagonal matrices with finite members in each block and banded matrices with $l_N=O(1)$. 

       \begin{thm}\label{t3.3} Suppose $R=\dim(\bg_t)$, and   Assumptions \ref{ass2.1} - \ref{ass2.3}, \ref{as3.6}- \ref{as3.8} hold. As $N, T\to\infty$, under $H_0$,
       $$
  \frac{N\sqrt{T}(\|\bP_{\widehat\bF}-\bP_\bG\|_F^2-\widehat\MEAN)}{ \widehat\sigma}\overset{d}{\longrightarrow}\mathcal N(0,1).
       $$

       \end{thm}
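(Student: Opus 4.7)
The plan is to reduce the test statistic to a sum of conditionally i.i.d. zero-mean quadratic forms, apply a conditional CLT, and then show that the feasibility replacements ($\widehat{\MEAN}$ for $\MEAN$ and $\widehat\sigma$ for $\sigma$) introduce only negligible error. Since $R=\dim(\bg_t)$ and, under $H_0$, $\bg_t=\btheta\bff_t$ with $\btheta$ an $r\times r$ invertible matrix, we have $R=r$, the matrix $\bH=\frac{1}{N}\bW'\bB$ is square with $\nu_{\min}(\bH)>c$ (Assumption~\ref{as3.6}(iii)) so $\bH^{-1}$ exists, and $\bP_\bG=\bP_\bF=\bP_{\bF\bH'}$.

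First I would write $\widehat\bF=\bF\bH'+\bE$, where $\bE$ has $t$-th row $\be_t'=\frac{1}{N}\bu_t'\bW$, and use the identity $\|\bP_A-\bP_B\|_F^2=2\tr(\bM_B\bP_A)$ for equal-rank projectors to get
\begin{equation*}
\|\bP_{\widehat\bF}-\bP_\bF\|_F^2
= 2\tr[\bE'\bM_\bF\bE(\widehat\bF'\widehat\bF)^{-1}]
= \tfrac{2}{T}\tr[\bE'\bM_\bF\bE\,\widehat\bS_f^{-1}].
\end{equation*}
I would then peel off two negligible pieces: (i)~replace $\bM_\bF$ by $\bI$, using $\|\bE'\bP_\bF\bE\|=O_P(1/N)$ (since $\|\bE'\bF\|=O_P(\sqrt{T/N})$ and $\lambda_{\min}(\bF'\bF)\asymp T$), which contributes $O_P(1/(TN))$ to the statistic, i.e.\ $O_P(1/\sqrt{T})$ after rescaling by $N\sqrt{T}$; and (ii)~replace $\widehat\bS_f^{-1}$ by $\bS_f^{-1}=\bA/2$, using $\|\widehat\bS_f-\bS_f\|=O_P(1/N+1/\sqrt{NT})$ together with $\|\bE'\bE\|=O_P(T/N)$, which also leaves a remainder $o_P(1/(N\sqrt{T}))$. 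The leading term then equals $\frac{1}{TN^2}\sum_t \tr[\bA\bW'\bu_t\bu_t'\bW]$, so after centering by $\MEAN$,
\begin{equation*}
N\sqrt{T}\bigl(\|\bP_{\widehat\bF}-\bP_\bF\|_F^2-\MEAN\bigr)
= \frac{1}{\sqrt{T}}\sum_{t=1}^T Z_t + o_P(1),\qquad
Z_t:=\tfrac{1}{N}\tr\bigl[\bA\bW'\bigl(\bu_t\bu_t'-\E(\bu_t\bu_t'|\bF)\bigr)\bW\bigr].
\end{equation*}
Conditional on $(\bF,\bW)$, the $Z_t$'s are serially independent by Assumption~\ref{as3.6}(i), zero-mean, with variance $\sigma^2$ and bounded fourth moment from Assumption~\ref{as3.6}(ii); a standard Lindeberg CLT then gives $\sigma^{-1}T^{-1/2}\sum_t Z_t\toD\mathcal N(0,1)$.

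The hardest step will be replacing $\MEAN$ by $\widehat{\MEAN}$: I must show $N\sqrt{T}(\widehat{\MEAN}-\MEAN)=o_P(1)$. The $(\widehat\bA-\bA)$ contribution is handled using $\|\widehat\bS_f^{-1}-\bS_f^{-1}\|=O_P(1/N+1/\sqrt{NT})$ and $\|\bW'\bSigma_u\bW\|/N=O(1)$, giving $o_P(1)$ under the stated rate assumptions. The $(\widehat\bSigma_u-\bSigma_u)$ contribution is delicate: naively applying Theorem~\ref{t2.2} gives only $O_P(1)$. Instead, under the strict sparsity Assumption~\ref{as3.7}(ii), the thresholding recovers the support of $\bSigma_u$ with probability $\to 1$, so up to a bias of order $D_N\omega_{NT}$ (which by Assumption~\ref{as3.7}(i) gives $o(1/(N^{-1}\sqrt{T}))$ after rescaling), the error equals $\frac{\sqrt{T}}{N}\sum_{(i,j)\in\supp} (s_{u,ij}-\sigma_{u,ij})(\bW\bA\bW')_{ji}$. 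I would bound its conditional variance directly as $\frac{1}{T}\sum_{(i,j),(m,n)\in\supp}\Cov(u_{it}u_{jt},u_{mt}u_{nt})(\bW\bA\bW')_{ji}(\bW\bA\bW')_{nm}$, and Assumption~\ref{as3.8} (together with boundedness of the weights $(\bW\bA\bW')_{ji}/N$) yields the bound $O(N/T)$, so the term is $O_P(\sqrt{N/T})\cdot\sqrt{T}/N=O_P(1/\sqrt N)=o_P(1)$.

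Finally, for $\widehat\sigma\toP\sigma$, I would invoke Assumption~\ref{as3.7} to reduce to showing $f(\widehat\bA,\widehat\bV)\to f(\bA,\bV)$, which follows from the continuity of $f$ in its arguments together with $\|\widehat\bA-\bA\|\toP 0$ (already established) and $\|\widehat\bV-\bV\|=\|\frac{1}{N}\bW'(\widehat\bSigma_u-\bSigma_u)\bW\|\toP 0$, the latter again exploiting the strict sparsity to bypass the suboptimal operator-norm rate of $\widehat\bSigma_u$. Combining these pieces with Slutsky's theorem yields the stated asymptotic standard normal distribution.
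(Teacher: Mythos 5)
Your proposal is correct and follows essentially the same route as the paper: both reduce the statistic to the quadratic form $\tr(\bA\frac{1}{T}\bE'\bE)$ with $\bA=2\bH^{'-1}(\frac{1}{T}\bF'\bF)^{-1}\bH^{-1}$, apply a conditional CLT using serial independence and the fourth-moment bound, and control $N\sqrt{T}(\widehat\MEAN-\MEAN)$ by combining support recovery of the thresholding, the strict-sparsity rate condition, and the second-moment bound of Assumption \ref{as3.8}, exactly as in the paper's Lemma \ref{lc.5}. Your identity $\|\bP_{\widehat\bF}-\bP_\bF\|_F^2=\frac{2}{T}\tr\bigl[\bE'\bM_\bF\bE\,(\frac{1}{T}\widehat\bF'\widehat\bF)^{-1}\bigr]$ is a tidier way of isolating the leading term than the paper's term-by-term expansion of $\bP_{\widehat\bF}-\bP_\bG$, but it is the same argument in substance.
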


\subsection{Factor-adjusted  false discovery control for multiple testing.}

Controlling the false discovery rate  (FDR) in large-scale hypothesis testing based on strongly correlated  testing series   has been an important   problem.  Suppose the data are generated from:
$$
\bx_{t}= \balpha + \bB\bff_t+ \bu_{t},
$$
where $\balpha=(\alpha_1,...,\alpha_N)'$ is the mean vector. This model allows strong cross-sectional dependences among $\bx_{t}$.
We are interested in testing $N$ number of hypotheses:
$$
H_{0}^i: \alpha_i=0,\quad i=1,..., N.
$$
  The FDR control  aims to develop test statistics $Z_i$ and threshold values   so that the overall false discovery rate is controlled at certain value. A crucial requirement is that   these test statistics should be   weakly dependent.
However, for $\bar\bff=\frac{1}{T}\sum_t\bar \bff_t$ and $\bar\bu=\frac{1}{T}\sum_t\bar \bu_t$,  we have
$
 \bar\bx=\frac{1}{T}\sum_t\bar \bx_t=\balpha +\bB\bar\bff+\bar\bu,
$
so  the presence of $\bB\bff_t$ makes the   mean  vector
    be cross-sectionally strongly dependent, failing  usual FDR procedures based on the simple sample average. This is the well known   confounding factor problem. While several methods have been proposed to remove the effect of confounding factors \citep{wang2017confounder, fan2019farmtest}, again, it has been assumed that the number of factors should be consistently estimable.

The diversified projection can be applied directly as a simple  implementation for the FDR control, valid for all $R\geq r$. Let  the diversified projection be
$\widehat\bff_t = \frac{1}{N}\bW'\bx_t $, and let $\widehat\bb_i$
be the OLS estimator for the slope vector by regressing $ x_{it}$ on $\widehat\bff_t$ with intercept.
 Then we can define the factor-adjusted regularized multiple test  \citep{fan2019farmtest} statistics
$
Z_i=   \widehat\alpha_i /se(\widehat\alpha_i)
$
where
$$
\widehat\alpha_i=  \bar x_i- \widehat\bb_i'\widehat\bff,\quad \widehat\bff=\frac{1}{T}\sum_{t=1}^T\widehat\bff_t,
$$
and $se(\widehat\alpha_i)$ is the associated standard error.
Our theories imply  the following expansion, uniformly for $i=1,..., N$ and all $R\geq r$,
\begin{eqnarray*}
\widehat\alpha_i-\alpha_i&=&\frac{1}{T}\sum_{t=1}^T\bg_tu_{it}
+o_P(T^{-1/2}),  \end{eqnarray*}
where $\bg_t=  1- \bar \bff' \bS_f^{-1} (\bff_t-\bar \bff), $ and $ \bS_f=\frac{1}{T}\sum_t(\bff_t-\bar \bff)(\bff_t-\bar \bff)'. $ This gives rise to the desired expansion
 so that $Z_i$ are weakly dependent. Therefore, we can apply standard procedures to $Z_i$ for the false discovery control.

       \section{Choices of Diversified Weights}\label{choice}

We discuss  some specific examples to choose  $\bW=(\bw_1,\cdots,\bw_R)=(w_{k,i}: k\leq R, i\leq N)$, the  weight matrix.
      \subsection{Loading characteristics}

      Factor loadings are often driven by observed characteristics. For example, in genetic studies,  single-nucleotide polymorphism (SNP) data  are often collected with the gene expression data  on the same group of subjects.  The SNPs
      drive underlying structure in the gene expressions, clinical and demographics data, through affecting their   loadings on the biological factors.
In asset pricing studies, it has been well documented that factor loadings are driven by firm specific characteristics, which are  independent of the model noise, but have strong explanatory powers on the loadings.

Motivated by the presence of characteristics, ``characteristic based" factor models have been extensively studied in the literature, e.g.,  \cite{gagliardini2016time, li2016supervised,CMO}. The general form of this model assumes the loadings  have the following decomposition \citep{fan2016projected}:
      $$
      \bb_i= \bg(\bz_i) + \bgamma_i,\quad \mathbb E(\bgamma_i|\bz_i)=0,\quad i\leq N,
      $$
      where $\bz_i$ is a vector of   characteristics that are observed on each subject and $\bg(\cdot)$ is a  nonparametric mean function. It is assumed that $\{\bz_i:i\leq N\}$ is independent of $\bu_t$ and that $\bg(\bz_i)$ is not degenerate so that $\bz_i$ has explanatory power. In addition,
   $\bgamma_i$ is the remaining loading components, after conditioning on $\bz_i.$
      When $\bz_i$ is available,   we can employ them as a  natural choice of the weights for the diversified factors.
      Fix an $R$-component of sieve basis functions: $(\phi_1(\cdot),...,\phi_R(\cdot))$ such as the Fourier basis or B splines. Then define
      $$\bW:=  (w_{i,k})_{N\times R},\quad \text{ where } w_{i,k}=\phi_k(\bz_i).$$

 The diversified projection using the so-constructed $\bW$ is related to the ``projected PCA" of \cite{fan2016projected}, but the latter is more complicated and requires stronger conditions than the diversified projection, because it is still PCA based. 

    \subsection{Moving window estimations}\label{trimpca}

This method is useful  when $\bu_t$ is serially independent, and related ideas have    been used recently  by \cite{barigozzi2018consistent}. Consider    out-of-sample forecasts  using  moving windows.
   Suppose   $\bx_t$ is observed for $T+T_0$ periods in total, but to pertain the  stationarity assumption, we only use the most recent $T$ observations to learn the latent factors, where $T$ may be potentially small.  Divide the sample into two periods:
   \begin{eqnarray*}
&&\text{periods (I) of learning weights:} \quad \bx_t=\bB_1\bff_t+\bu_t,\quad t=1,...,T_0\cr
&&\text{periods (II) of interest: } \quad\bx_t=\bB\bff_t+\bu_t,\quad t=T_0+1,...,T_0+T.
   \end{eqnarray*}
  While $\bB_1$ and $\bB$ can be different (e.g., presence of structural  breaks), they are assumed to be closely related between two sampling periods. As such, we can learn about the diversified weights from periods (I) to estimate the latent factors for the periods of estimation interests (II). Specifically, apply PCA on periods (I) to extract $R$ number of factor loadings:
  $
  \widehat \bB_1=(\widehat b_{i,k})_{N\times R}.
  $
  Now for a pre-determined  constant $\epsilon>0$, define $\bW=(w_{i,k})_{N\times R}$  where
  $$
  w_{i,k}=\frac{\widehat b_{i,k}}{\max\{1, \epsilon\max_{i\leq N}|\widehat b_{i,k}|\}},\quad k\leq R,\quad i\leq N.
  $$
 As discussed by \cite{barigozzi2018consistent},  the trimming constant $\epsilon$  ensures that  these weights are well diversified and correlated with the loadings. In addition, if $\bu_t$ is serially independent, then $\bW$ is also  independent  of $\bu_t$ for $t=m+1,...,m+T. $ As such, the conditions on the diversified weights are satisfied. It is straightforward to extending this idea to multi-periods rolling window forecasts, where weights are sequentially updated for rolling windows.
  
The aforementioned method   uses the idea that sample splitting creates serial independences. 
In the presence of mixing-type serial dependences, \cite{barigozzi2018consistent}
proposed to split   the data into blocks     and estimate factor loadings using subsamples  
omitting the current block as well as its immediate neighbors. Their method can be also applied in the current context to create the weighting matrix.

      \subsection{Initial Transformation}

 A related idea is to use transformations of the initial observation $\bx_{t}$ for $t=0$.  Suppose $(\bff_0, \bu_0)$ is independent of $\{\bu_t: t\geq 1\}$, and let $\{\phi_k: k=1,..., R\}$ be a set of sieve transformations. Then we can apply
 $
 w_{i,k} = \phi_k(x_{i,0})$ .
  These weights are correlated with $\bB$ through $\bx_0=\bB\bff_0+\bu_0$ so that the rank condition is satisfied.  The initial transformation method only requires $\{\bu_t\}$ be independent of its initial value. The similar idea has been used recently by \cite{juodis2020linear}.

       \subsection{Hadamard projection}

       We can set deterministic weights as in the statistical experimental designs:      $$
 \bW =\begin{pmatrix}
 1& 1 & 1& 1&\hdots\\
 1& -1&1 & 1\\
 1& 1& -1& 1 \\
 1& -1& -1&-1&\hdots\\
 1& 1& 1&-1\\
 1& -1& 1&-1\\
 \vdots&  \vdots& \vdots& \vdots
 \end{pmatrix}.
       $$
     So for each $2\leq k\leq R$, the $k$ th column of $\bW$  equals $(  1_{k-1}', -1_{k-1}',  1_{k-1}', -1_{k-1}',\hdots )$, where $1_m$ denotes the $m$-dimensional vector of ones.  Closely related types of matrices are known as the   Walsh-Hadamard matrices, formed by rearranging the columns so that the number of sign changes in a column is in an increasing order, and the columns are orthogonal.   Therefore,   we  can also set $\bW$ as the $N\times R$ upper-left corner submatrix of a Hadamard matrix of dimension  $2^K$ with $ K=\lceil\log_2 N\rceil$, where $\lceil.\rceil$ denotes the ceiling function.

 \section{Monte Carlo Experiments}\label{sec:sim}

 In this section we illustrate the finite sample properties of the  forecasting and inference methods based on diversified factors, and  use   four types of  weight matrices:
\begin{enumerate}


 \item[(i)]  Hadamard weight: $\bw_1 = {\bf 1}$ and $\bw_k=(  {\bf 1}_{k-1}', -{\bf 1}_{k-1}',  {\bf 1}_{k-1}', -{\bf 1}_{k-1}',\hdots )$ for $2\leq k \leq R$, where ${\bf 1}_{k-1}$ is a vector of one's of length $k-1$.

 \item[(ii)] Loading characteristics:  loadings depend on some characteristics $\bz_i$, and we apply the polynomial transformations  so that the $i$ th row of $\bW$ is   $ (g_1(\bz_i), g_2(\bz_i),...,g_R(\bz_i))$ for $  i\leq N$.  In our numerical work, we take one characteristic   and set $g_j(\bz_i) = \bz_i^j$.

 \item[(iii)] Rolling windows:   when conducting simulations for  out-of-sample forecasts, we use the trimmed PCA as described in Section \ref{trimpca}.

 \item[(iv)] Initial transformations:   we use the initial transformation
 so that the $i$ th row of $\bW$ is   $ (x_{i,0},  x_{i,0}^2,...,x_{i,0}^R)$ for $  i\leq N.$
\end{enumerate}

 We generate the data from
 $$
 \bx_{t}=\bB\bff_t+\bu_t,\quad \bB=(b_{i,k})*N^{-(1-\alpha)/2},\quad \text{with } b_{i,k}= (\bz_i^k+0.5\gamma_{i,k}) .
 $$ We   set $\bz_i=\sin(h_i)$ where both   $h_i$ and $\gamma_{i,k}$ are  independent  scalar standard normal variables. The DGP of $b_{i,k}$ is motivated from the asset pricing literature, where factor ``betas" are known to be partially explained by individual-specific observables $\bz_i$, which represent a set of time-invariant  characteristics such as individual stocks' size, momentum, values. Here we use the polynomial transformation $\bz_i^k$ to represent the effect of characteristics.  In addition, the $\gamma_{i,k}$-component captures the unobservable beta components that are not explainable by the characteristics. With the identification condition $\mathbb E(\gamma_{i,k}|\bz_i)=0$, both components in $b_{i,k}$ can be consistently estimated.  See more motivations of this model in  \cite{fan2016projected} and \cite{kim2018arbitrage}. The multiplier $N^{-(1-\alpha)/2}$ measures the strength of the factors, whereas    the spiked eigenvalue of the sample covariance grows at rate $N^{\alpha}$.   Hence larger $\alpha$ indicates stronger factors. 

The factors  are   multivariate standard normal. 
 To generate the idioscyncratic term, we set  the $N\times T$ matrix $\bU=\bSigma_N^{1/2}\bar \bU\bSigma_T^{1/2} ;$ here $\bar\bU$ is an $N\times T$ matrix, whose entries  independent   standard normal.   The $N\times N$ matrix $\bSigma_N$ and the $T\times T$ matrix $\bSigma_T$ respectively govern the cross-sectional and serial correlations of $u_{it}$. We set
 $
 \bSigma_T=(\rho_T^{|t-s|})_{st}
 $
 , and use a sparse cross-sectional covariance:
 \begin{equation}\label{eq6.1add}
 \bSigma_N= \diag\{\underbrace{\bA,\cdots,\bA}_{\text{ $n$ of them}}, \bI\},\quad \bA= (\rho_N^{|i-j|})
 \end{equation}
 where $\bA$ is a small four-dimensional block matrix and $\bI$ is $(N-4n)\times (N-4n)$ identity matrix so that $\bSigma_N$ has a block-diagonal structure. We fix $n=3$  and $\rho_N=0.7$. The numerical performances are studied in the following subsection with various choice of $\rho_T$ to test about the sensitivity against serial correlations.

\subsection{Covariance estimation}

We first study the performance of estimating $\bSigma_u$.   To do so, we set $r=1$ and respectively calculate $\widehat\bSigma_u$ using $R=r,\cdots,r+3$. As estimating $\bSigma_u$ is particularly important in asset pricing models,  we use the    loading characteristic weights $w_{i,k}=\bz_i^k$, $k=1,..., R$, as the characteristic $\bz_i$ is often directly observable along with the return data.

For comparison purposes, we also estimate $\bSigma_u$ using two benchmark estimators: 

(i) The PC-estimator for factors with $R=r$ (the POET method by \cite{POET}). So the PC-estimator in this simulation assumes   the true number of factors $r=1$ to be known; 

  (ii) The known-factor method.  We use the true factors, and  estimate loadings and $u_{it}$ by OLS, followed by SCAD-thresholding.   
  
  We  set two serial dependence scenarios: $\rho_T =0.1$ (weak serial dependence) and $\rho_T=0.7$  (strong serial dependence), as well as two factor-strength scenarios: 
  $\alpha=1$   and   $\alpha=0.5$.

  Figure \ref{fig1}  plots $\|\widehat\bSigma_u-\bSigma_u\|$ and  $\|\widehat\bSigma_u^{-1}-\bSigma_u^{-1}\|$, averaged over 100 replications, as $N=T$ grows. 
  While all  estimators perform similarly,
the POET-estimator is not always better than the diversifying projection (DP).
For estimating $\bSigma_u$, both the DP with $R=r$ and the known factor method are overall better than the POET estimator, followed by DP with other choices of $R$. This comparison is reasonable, reflecting the robustness of DP to the serial conditions and strength of factors.  Perhaps what is surprising is the comparison for estimating the inverse covariance. In all four scenarios of the factor strength and serial correlations, the DP with $R=r$ performs the worst among the six estimators, and DP with over estimated $R$ is in general better than 
both the known factor method and the POET. Our interpretation of this is that we set    relatively strong cross-sectional correlations in the data generating process, making $\bSigma_u^{-1}$   more unstable. The use of  more diversified weights provides extra information to help stabilizing the  inverse covariance estimator.


\begin{figure}[htbp!]
	\begin{center}
		\includegraphics[width=12cm]{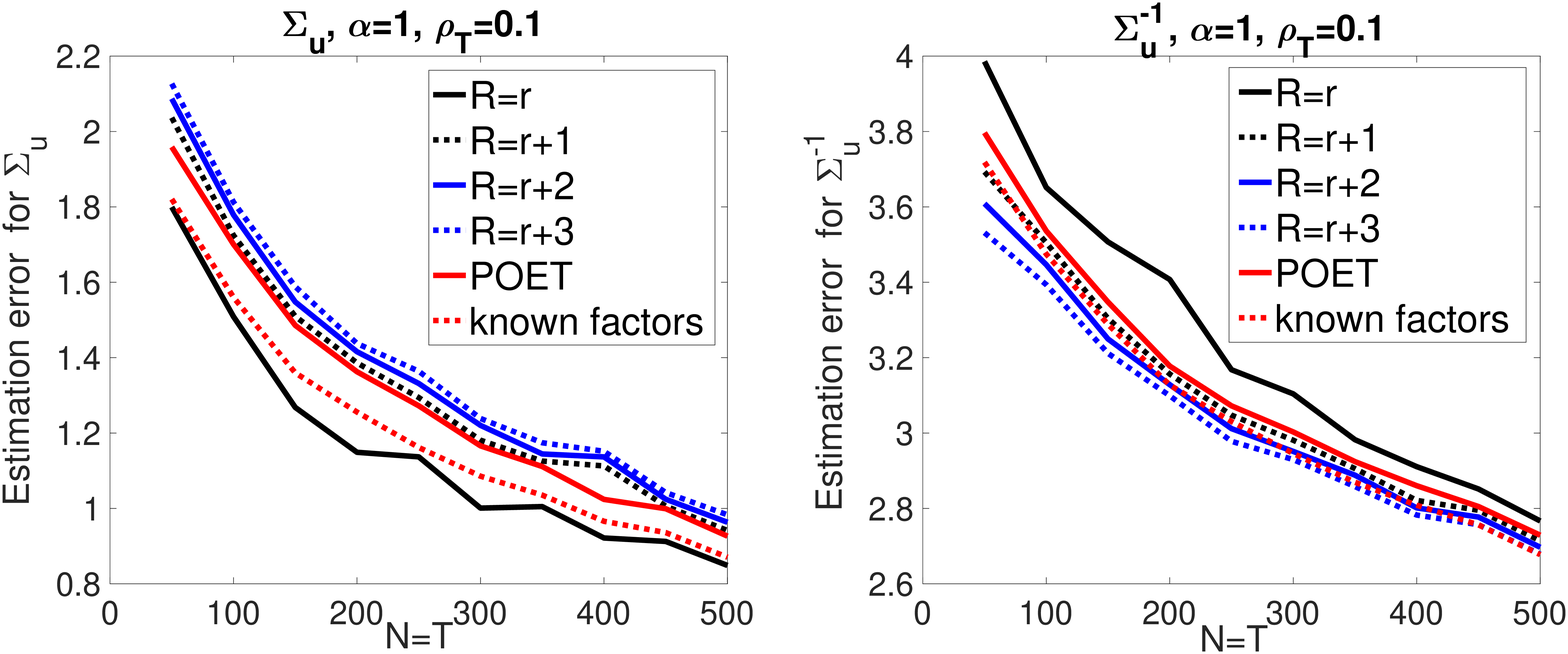} 
		\includegraphics[width=12cm]{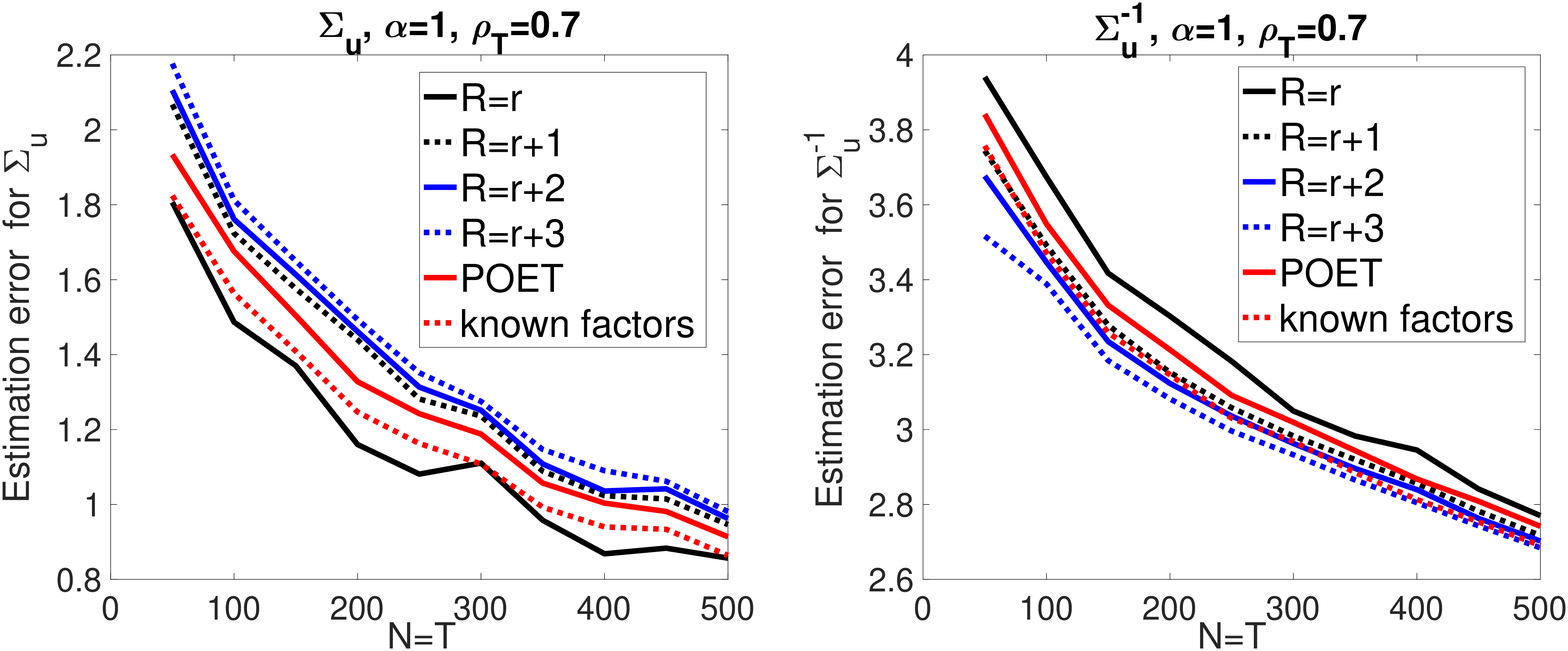} \\
		\includegraphics[width=12cm]{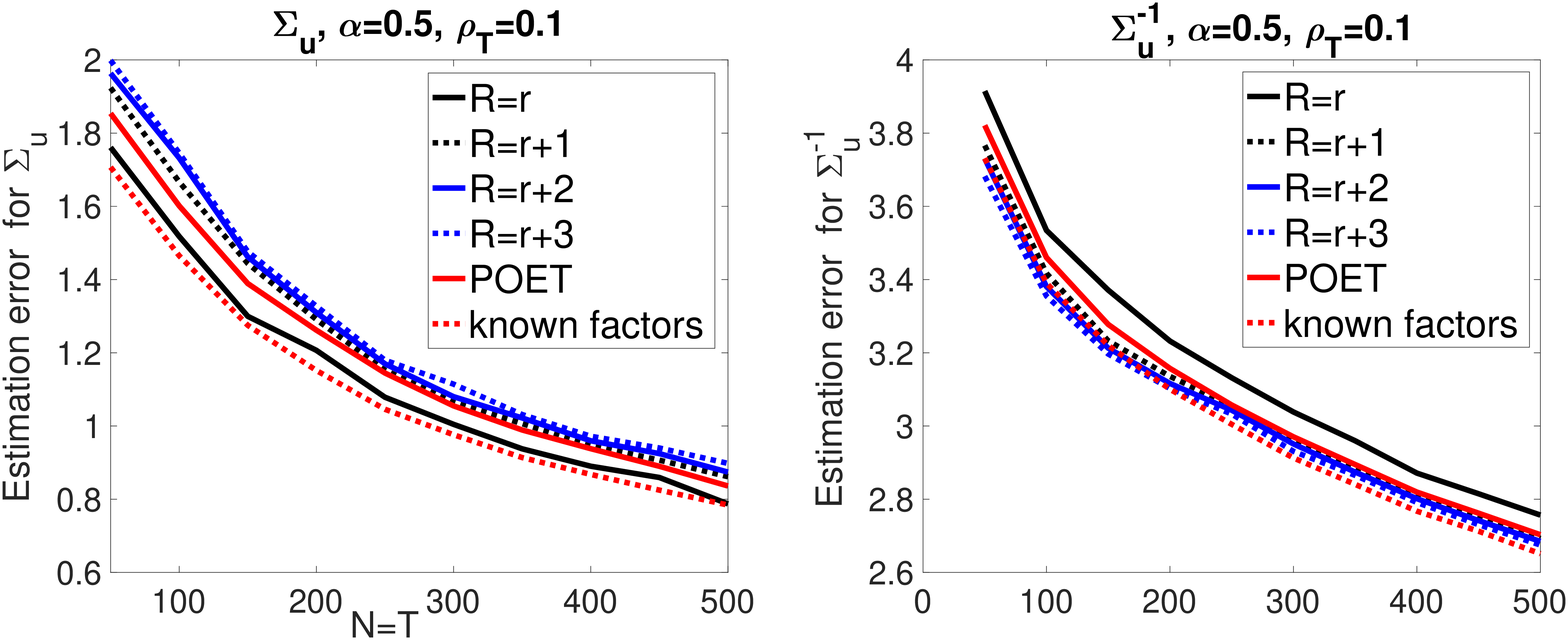} \\
		\includegraphics[width=12cm]{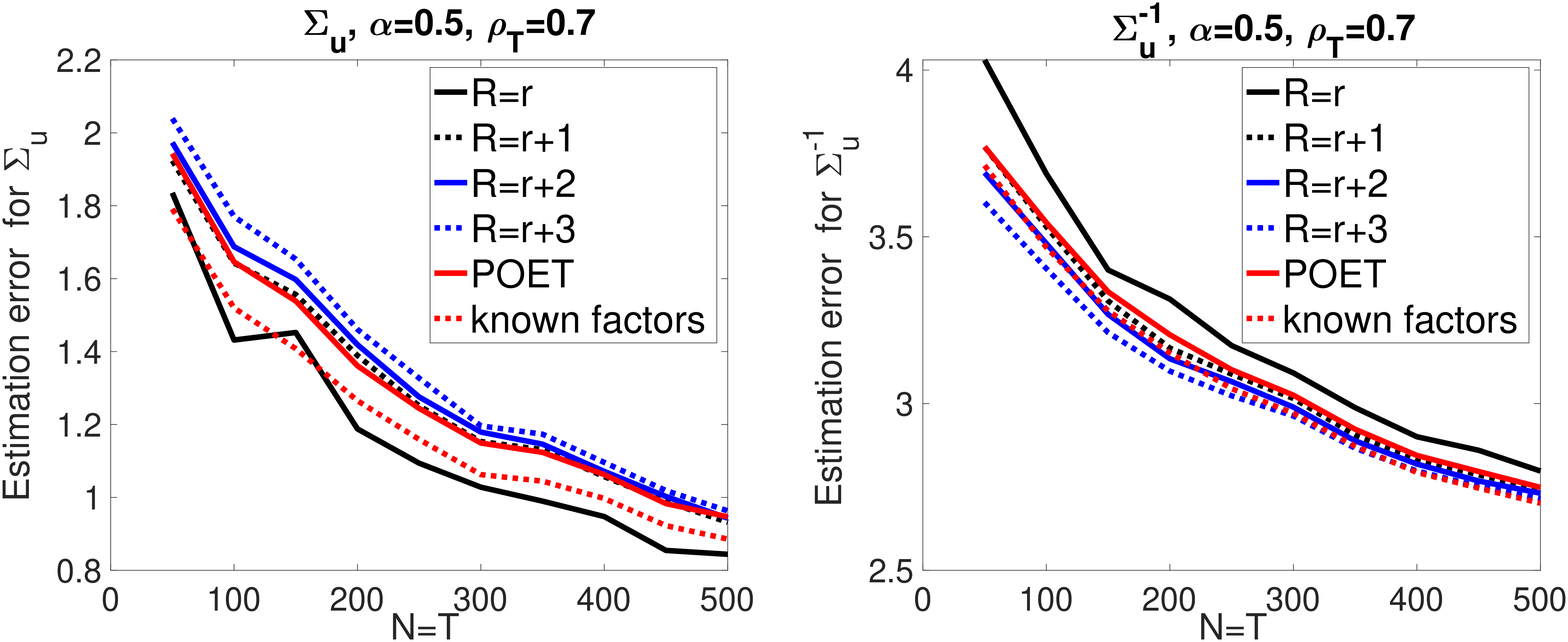} 

		\caption{\footnotesize  The estimation errors in operator-norm $\|\widehat\bSigma_u-\bSigma_u\|$ (left) and $\|\widehat\bSigma_u^{-1}-\bSigma_u^{-1}\|$ (right) as the dimension increases, averaged over 100 replications. We set $N=T$. Here $R=r,\cdots,r+3$ correspond to the diversified factor estimators using $R$ number of working factors. Characteristic weights are used. Here $\alpha$ measures  the factor strength and $\rho_T$ is the serial correlation. 
		\label{fig1} }
	\end{center}
	
\end{figure}

\subsection{Out-of-sample forecast}\label{forecastsimulation}

We assess the performance of the proposed factor estimators on out-of-sample forecasts. Consider the following forecast model
$$y_{t+1} =\beta_0+\beta y_t+ \balpha'\bff_t+\varepsilon_{t+1}$$
where we set $r=\dim(\bff_t)=2$, $(\beta_0,\beta)=(1.5, 0.5)$,  and $\balpha=(1,1)'$.  In addition, $ \varepsilon_t $  are independent   standard normal. The data generating process for   $ \bx_{t}= \bB\bff_{t}+\bu_{t}$ is the same as before, in the presence of both serial and cross-sectional correlations.
We conduct  one-step-ahead  out-of-sample forecast  $m$ times using a moving window of   size $T$. Here $T$ is also the sample size for estimations.  We simulate $m+T$ observations in total.  For each $t=0,\cdots,m-1$, we use the data $\{(\bx_{t+1}, y_{t+1}),\cdots,( \bx_{t+T}, y_{t+T})\}$ to conduct  one-step-ahead forecast of  $y_{t+T+1}$. Specifically,    we estimate the factors using $\{\bx_{t+1},\cdots,\bx_{t+T}\}$, and obtain $\{\widehat \bff_{t+1},\cdots,\widehat \bff_{t+T}\}$. The coefficients    in the forecasting regression is then estimated by the OLS, denoted by $(\widehat\beta_{0, t+T}, \widehat\beta_{t+T}, \widehat\balpha_{t+T})$.  We then forecast $y_{t+T+1}$  by $$\widehat y_{t+T+1|t+T}= \widehat \beta_{0, t+T} +\widehat\beta_{t+T} y_{t+T}+\widehat\balpha_{t+T}'\widehat\bff_{t+T}.$$  
 Such a procedure continues for $t=0,\cdots, m-1$.

\begin{table}[htp]
\small
\caption{Out-of-Sample  $ \text{MSE}(\text{M})/\text{MSE}(\text{PC})$  for three types of estimators.}
\begin{center}
\begin{tabular}{ccc|ccccccccc}
\hline
\hline
  &  &  & \multicolumn{3}{c}{Characteristic weights} & \multicolumn{3}{c}{Rolling window weights}  &  \multicolumn{2}{c}{GDF} & KF     \\
     &  &  & \multicolumn{9}{c}{$R $}    \\
$\rho_T $& $N$ & $T$ & $r$ & $r+1$ & $r+3$ & $r$ & $r+1$ & $r+3$ & $3$ & $4$ & $r$ \\
\hline
&&&&&&\\
     &  &  & \multicolumn{9}{c}{$\alpha=1 $}   \\
0  & 100 & 50 & 1.141 & 1.090 &  1.109 &  0.968 &  1.001 &  1.010 &    0.991&     1.016 & 1.007  \\
&    & 100 &  0.998 & 0.980 & 1.035 &  0.979 &1.039 & 1.046 &  1.008 &1.009 &    1.002 \\
&&&&&&\\
0.5&   & 50 &  0.996 &  1.008 &  0.965 &  0.993 & 1.018 & 1.055&   1.000 &0.996 &0.986  \\
 & & 100&  0.885 & 0.886 & 0.917 & 0.937& 0.922&  0.939 &  0.995&  0.997& 1.005 \\

&&&&&&\\
0.9 & & 50 & 0.602 & 0.621 & 0.637 &  0.608&0.620 & 0.680 &  0.763 &  0.772 &   1.023  \\
 &   & 100 &0.434 & 0.458 &  0.482 & 0.422 & 0.419 & 0.450 &  0.863 &      0.578&    0.985\\
&&&&&&\\
 &  &  & \multicolumn{9}{c}{$\alpha=0.2 $}   \\
0  &  & 50 & 0.876 &0.913  &0.987  & 1.072  &1.059  & 1.071  &  0.991  &  0.985  & 1.003 \\
   &  & 100 &  0.931 & 0.906 &    0.966 &    1.065&  1.114 & 1.156 &0.996&  1.012 &  0.992 \\
&&&&&&\\
0.5  &  & 50 &   0.891 &0.897 &    1.044 &1.059 &  1.082& 1.149& 1.002  &   0.981   & 0.958\\
   &   & 100 &  0.972 & 0.963&   0.970 &0.868 &    0.793&   0.817 & 0.968 &    0.981  &  1.007  \\

&&&&&&\\
0.9  &    & 50 & 0.478 & 0.513 &  0.647 &0.713& 0.731 & 0.688 &  0.953
 & 0.745&  0.966 \\
   &   & 100&  0.762 & 0.765 & 0.767 & 0.788 &0.806
    & 0.849 & 0.927 &  0.851 &  0.951\\

\hline

\hline
\end{tabular}
\end{center}
\label{tab2}
{\footnotesize  Reported are the out-of-sample relative MSEs. The benckmark PC-estimator uses the true number of factors. The dimension $N=100$ is fixed.  The diversified projection uses  $R$ estimated factors with two types of weights: characteristic weights and rolling window weights. In addition, the columns of GDF estimates factors from the   generalized dynamic factor model of \cite{forni2005generalized}, with $R$ number of dynamic factors. The Matlab codes for implementing \cite{forni2005generalized} and \cite{HL} are downloaded from Matteo Barigozzi's website \texttt{www.barigozzi.eu/codes.html.} The column of KF refers to the Kalman filtering developed by \cite{doz2011two}, which uses the true $r$ number of factors. Both GDF and KF specifically estimate dynamic factors.}
\end{table}%

 We compute the diversified factor estimators using the two types of weights, with $R=r, r+1,r+3$ as the working number of factors.  As for the  moving  windows weight, we assume there is a historical time series $\bx_t=\bB_1\bff_t+\bu_t$, for $t=-T,...,0$, and the loadings $\bB_1$ is correlated with $\bB$ in the sense that $ \bB_1=0.8\bB+0.5 \bZ$, where $\bZ$ is multivariate standard normal.
 We then apply the moving window method to create $\bW$ as outlined in Section \ref{trimpca}.
 Though the theory for the moving window  weights requires serial correlation $\rho_T=0$, we nevertheless set $\rho_T=0, 0.5$ and 0.9 to examine the performance under serially correlated series.

The benchmark method is the PC-estimator, which uses the true number of factors. 
In addition,  we also consider two well known methods that specifically estimate factor dynamics:

(i)  GDF: the  generalized dynamic factor model of \cite{forni2005generalized}. The selection criterion of \cite{HL} recommended using, on average, three dynamic factors, so we    use $R= 3, 4$ numbers of dynamic factors.

(ii) KF: the two-step Kalman filtering of \cite{doz2011two}. In the first step factors are preliminarily estimated and fit  a VAR model; in the second step,  Kalman smoother  is applied to calculate the projection onto the  observations.  For this approach, we  use $R=2$, the true number of factors. 

 For each method M, we calculate the   mean squared out-of-sample forecasting error:
$$
\text{MSE}(\text{M})=\frac{1}{m}\sum_{t=0}^{m-1} (y_{t+T+1}-\widehat y_{t+T+1|t+T})^2,
$$
and report the relative MSE to the PC method:
$
 \text{MSE}(\text{M})/\text{MSE}(\text{PC}).
$
It is worthwhile  to emphasize that this study does  not aim to beat the PC-method. In fact, the PC-estimator yields the optimal rank $r$-estimation of the low-rank structure, in the sense that the  estimated  low-rank component $\bB\bF'$ satisfies:
$
\widehat \bB_{pc}\widehat\bF'_{pc}=\arg\min_{\text{rank}(\bA)=r} \|\bX-\bA\|_F^2.
$
So when the number of factors $r$ is correctly specified and the time series dependence is not strong,  the PC-estimator enjoys some optimal property. Nevertheless we use PC as the benchmark as it is the most commonly used   in this literature. We aim to   see how well  the proposed DP method performs  relative to the benchmark.

The results are reported in Table \ref{tab2}   for $m=50$, and is computed based on one set of simulation replications. We see that   the DP with various $R$ and Generalized DF are in most scenarios similar to the PC-estimator, and DP  outperforms under the strong serial correlations.  In all cases, Kalman filtering is comparable with PC, including the case of strong serial correlations.

\subsection{Post-selection inference}
We now study the inference  for the  effect  of $\bg_t$ in the following factor-augmented model
 \begin{eqnarray*}
  y_{t} &=& \bbeta\bg_t+ \bnu'\bx_t +\eta_t,\cr
  \bg_t&=& \btheta'\bx_t +\bvarepsilon_{g,t}\cr
  \bx_t&=&\bB\bff_t+\bu_t,
  \end{eqnarray*}
where both $\bnu$ and $\btheta$ are set to high-dimensional sparse vectors.  The goal is to make inference about $\bbeta$, using the factor-augmented post-selection inference. We generate $\bu_t\sim \mathcal N(0,\bSigma_u)$, $(\eta_t, \bvarepsilon_{g,t})\sim \mathcal N(0,\bI)$. We set $(\bu_t, \bvarepsilon_{g,t}, \eta_t)$ be serially independent, but still allow the same cross-sectional dependence among $\bu_t$. This allows us to  focus on the effect of over-estimating factors.
The $r$-dimensional $\bff_t$ are  independent standard normal.  We set the true $\bbeta=1$,  $\btheta=\bnu=(1,-1.5, 0.5,0,...,0)$ and $T=N=200.$

We employ the diversified factor estimator described in Section \ref{sec:4} with various working number of factors $R$, and compare with the benchmark ``double-selection" method of \cite{belloni2014inference}. In particular, we consider two settings:

(i) $r=0$: there are no factors so $\bx_t$ itself is weakly dependent.

(ii) $r=2$: there are two factors driving  $\bx_t$. Set $\alpha=1$ so both factors are strong. \\
We calculate the standardized estimates:
$
z:=\widehat\sigma_{\eta, g}^{-1}\widehat \sigma_g^2\sqrt{T}(\widehat\beta-\beta),
$
where the standard error is the estimated feasible one.
  Our theory shows that the sampling distribution of $z$ should be approximately standard normal.

Figures  5.2  and \ref{fig3} plot the  histograms of the standardized estimates over 200 replications, superimposed with the standard normal density. The histogram is scaled to be a density function.
We present the results when  the initial transformation are used as weights   for the diversified factors. The results from  characteristics and Hadamard weights are very similar.     When $r=0$, while it is expected that the double selection performs very well, as is shown in Figure \ref{fig3},  using $R\geq 1$ factors  also produces  $z$-statistics whose distribution is also close to the standard normality. This shows that the factor-augmented method is robust to the absence of factor structures. On the other hand, when $r=2$,  the factor-augmented method continues to perform well. In contrast,   the double selection is severely biased, and the distribution of its $z$-statistic is far off from the standard normality.

      \begin{minipage}{0.45\linewidth}
          \begin{figure}[H]\label{fig2}
                 \hspace{-0.15\linewidth}
        \includegraphics[width=9cm]{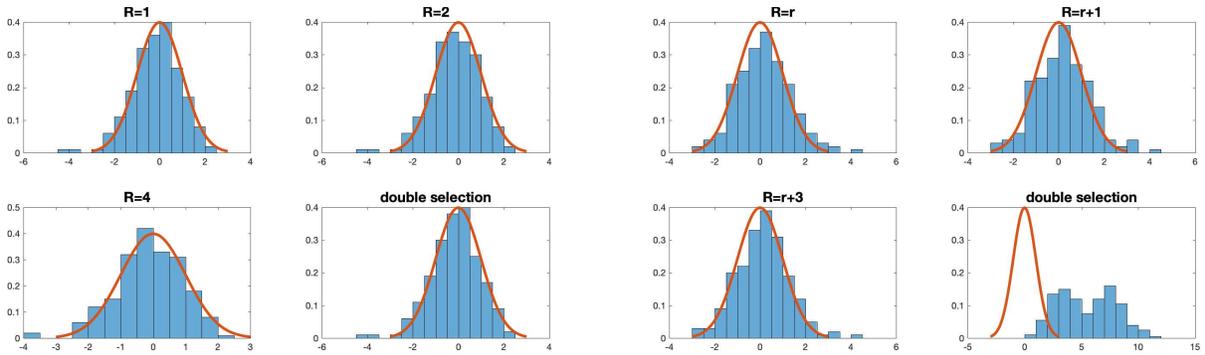}
              \caption{true $r=0$}
          \end{figure}
      \end{minipage}
      \begin{minipage}{0.45\linewidth}
          \begin{figure}[H]\label{fig3}
            \includegraphics[width=9cm]{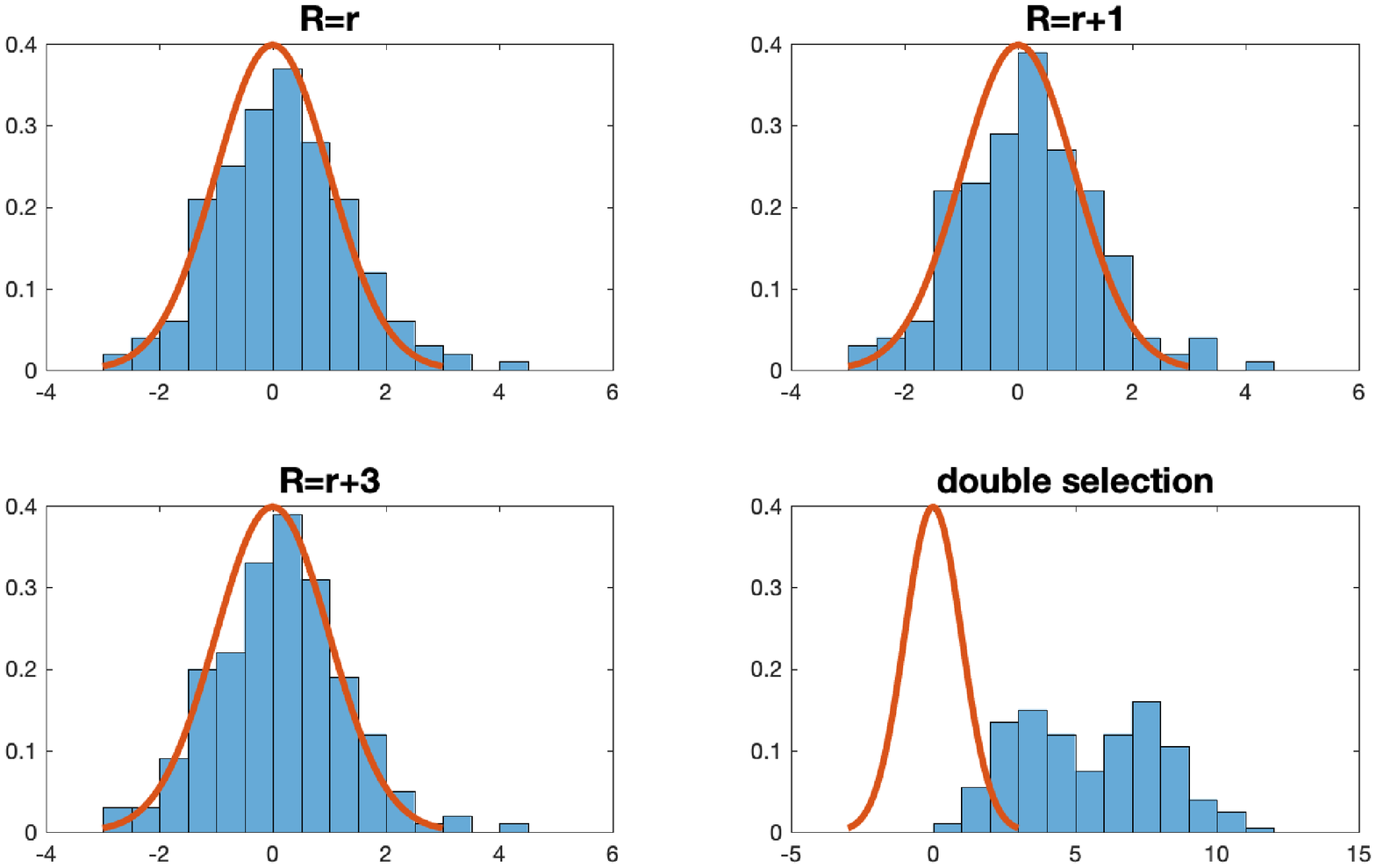}
              \caption{true $r=2$}
          \end{figure}
      \end{minipage}

 \vspace{1em}
 { \footnotesize
	 The first three panels employ the diversified factor estimator with $R$ number of working factors. The last panel uses the double selection, which directly selects among $\bx_t$. The weights used are the initial transformations ($t=0$) so   that the $i$ th row of $\bW$ is $ (x_{i,0}, x_{i,0}^2,...,x_{i,0}^R)$ for $  i\leq N$. }

\subsection{Testing the specification of   empirical factors}\label{sec:5.4}

In the last simulation study, we study the size and power of the test statistic for
$H_0: \bg_t=\btheta\bff_t$ for some $r\times r$ invertible matrix $\btheta$. Here $\bg_t$ is  a vector of known ``empirical factors" that applied researchers propose to   approximate the true factors.  We generate
$$
\bg_t= \btheta\bff_t+ \gamma \bh_t,  \quad t\leq T,
$$
where $\btheta$ is an $r$-dimensional identity matrix, and  $(\bff_t, \bh_t)\sim \mathcal N(0, \bI)$.  Here $\gamma$ governs the strength of the alternatives. We assume that $\bu_t$ is serially independent  normal generated  from $\mathcal N(0,\bSigma_N)$, with $\bSigma_N$ as in (\ref{eq6.1add}), pertaining the same cross-sectional dependence. We set $R=r=2$ and fix $N=200$.
In each of the simulations, we calculate  the test statistic as defined in Section \ref{sec:5}, and set the significance level to 0.05.
We use the SCAD-thresholding to estimate $\bSigma_u$ for both $\widehat\MEAN$ and $\widehat\sigma$.

 Table \ref{tab3} presents the rejection probability over 1000 replications, with $\gamma=0$ representing the size of the test. Above all, the results look satisfactory with controlled size and reasonable powers, while  weights using initial transformations have some size distortions.

\begin{table}[htp]
\small
\caption{  Probability of rejection at  level 0.05. $\gamma$ represents the strength of alternatives. }
\begin{center}
\begin{tabular}{cc|ccc}
\hline
\hline

 $\gamma$ & $T$ & Characteristic weights  & Hadamard  weights  & Initial transformation\\
 \hline
 &&&&\\
0 & 100 & 0.054 & 0.046 & 0.065 \\
 & 200 & 0.052 & 0.047  & 0.074\\
  &&&&\\
0.2 & 100 & 1.000 & 0.998 & 1.000\\
 & 200 & 0.975 & 1.000 &1.000\\

\hline

\hline
\end{tabular}
\end{center}
\label{tab3}
\end{table}%

\section{Conclusion}
We propose simple estimators of the latent factors  using cross-sectional  projections of the panel data, by  weighted averages. These  weights are chosen to diversify away the idiosyncratic  components,  resulting in ``diversified factors".  Because the projections are conducted cross-sectionally,  they are robust to serial conditions, easy to analyze due to data-independent weights, and work even for finite length of time series. We  formally prove that  this procedure is  robust to over-estimating the number of factors, and  illustrate it  in several  applications.   We also recommend several choices for the diversified weights.

\appendix

 \section{Technical Proofs}

 Throughout the proofs, we use $C$ to denote a generic positive constant. Recall that $\nu_{\min}(\bH)$ and $\nu_{\max}(\bH) $ respectively denote the minimum and maximum nonzero singular values of $\bH . $  In addition, $\bP_\bA=\bA(\bA'\bA)^{-1}\bA'$ and $\bM_\bA=\bI-\bP_\bA$ denote the projection matrices of a matrix $\bA$. If $\bA'\bA$ is singular, $(\bA'\bA)^{-1}$ is replaced with its Moore-Penrose generalized inverse $(\bA'\bA)^{+}$.
 Let $\bU$ be the $N\times T$ matrix of $u_{it}$.  Recall that   $R=\dim(\widehat\bff_t)$ and $r=\dim(\bff_t)$.
 
We use $\|\bA\|$ and $\|\bA\|_F$ to respectively denote  the operator norm and Frobinus norm. 
Finally, we define $\|\bA\|_\infty$ as follows:  if $\bA$ is an $N\times K$ matrix with $K= R$ or $r$, then $\|\bA\|_\infty=\max_{i\leq N}\|\bA_i\|$ where $\bA_i$ denotes the $i$ th row of $\bA$;  if $\bA$ is a $K \times N$ matrix with $K= R$ or $r$, then $\|\bA\|_\infty=\max_{i\leq N}\|\bA_i\|$ where $\bA_i$ denotes the $i$ th column of $\bA$; if $\bA$ is an $N\times N$ matrix, then $\|\bA\|_\infty=\max_{i,j\leq N}|A_{ij}|$ where $A_{ij}$ denotes the $(i,j)$ th element of $\bA.$

Throughout the proof, all $\E(.)$, $\E(.|.)$ and $\Var(.)$ are calculated conditionally on $\bW$. 

  \subsection{A key Proposition for asymptotic analysis when $R\geq r$}


 \begin{prop}
 	\label{la.2}  Suppose $R\geq r$ and $T,N\to\infty$. Also suppose $\bG$ is a $T\times d$ matrix so that  $\E(\bU|\bG)=0$,  $\frac{1}{T}\|\bG\|^2=O_P(1)$, for some fixed dimension $d$, and  Assumption \ref{ass2.1} - \ref{ass2.3} hold. 
	In addition, for each $\bK\in\{\bI_T,\bM_\bG\},$ suppose $\lambda_{\min}(\frac{1}{T}\bF'\bK\bF)>c>0$. Then \\
 	(i)   $\lambda_{\min}(\frac{1}{T}\widehat\bF'\bK\widehat\bF)\geq cN^{-1}$
 	with probability approaching one for some  $c >0$,\\
 	(ii) $\|\bH'(\frac{1}{T}\widehat\bF'\bK\widehat\bF)^{-1} \|=O_P(\nu_{\min}^{-1}+\sqrt{\frac{N}{T}})$, and  $\|\bH'(\frac{1}{T}\widehat\bF'\bK\widehat\bF)^{-1} \bH\|=O_P(1 )$.\\
 	(iii) $\|\bH'(\frac{1}{T}\widehat\bF'\bK\widehat\bF)^{-1}\bH- \bH'(\bH\frac{1}{T} \bF' \bK\bF\bH')^{+}\bH\|=O_P(  \frac{1}{N\nu^2_{\min}} + \frac{1}{T})$, and
 	$\frac{1}{T}\bG'(\bP_{\widehat\bF}-\bP_{\bF\bH'})\bG= O_P(  \frac{1}{N\nu^2_{\min}} + \frac{1}{T}).$
 \end{prop}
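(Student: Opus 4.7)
I would decompose $\widehat\bF = \bF\bH' + \bE$ with $\bE:=\tfrac{1}{N}\bU'\bW\in\mathbb{R}^{T\times R}$, and set $\bM:=\tfrac{1}{T}\bF'\bK\bF$. Expanding gives
\[
\widehat\bS_K := \tfrac{1}{T}\widehat\bF'\bK\widehat\bF = \bH\bM\bH' + \bH\tfrac{1}{T}\bF'\bK\bE + \tfrac{1}{T}\bE'\bK\bF\bH' + \tfrac{1}{T}\bE'\bK\bE.
\]
The first step is to establish two uniform (in $\bK\in\{\bI_T,\bM_\bG\}$) noise bounds: $\|\tfrac{1}{T}\bE'\bK\bE\| = O_P(1/N)$, from moment calculations on $\tfrac{1}{N^2}\bW'\bU\bK\bU'\bW$ using Assumption \ref{ass2.3}(i) and the boundedness of $\bw_k$; and $\|\tfrac{1}{T}\bF'\bK\bE\| = O_P(1/\sqrt{NT})$, from second moments exploiting $\E(\bU\mid\bF,\bG)=0$ and Assumption \ref{ass2.3}(ii).

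I then pass to the orthonormal basis $[\bP_1,\bP_{1,\perp}]$ of $\mathbb{R}^R$, where $\bH=\bP_1\bD\bP_2'$ is the thin SVD ($\bP_1$ is $R\times r$, $\bP_2$ is $r\times r$ orthogonal, and $\bD$ carries the singular values $\asymp\nu_{\min}$). The geometric identity $\bH'\bP_{1,\perp}=0$ kills the signal contribution to the $(1,2)$ and $(2,2)$ blocks of $\widetilde\bS:=[\bP_1,\bP_{1,\perp}]'\widehat\bS_K[\bP_1,\bP_{1,\perp}]$, leaving
\[
\widetilde\bS_{11} = \bD\bP_2'\bM\bP_2\bD + O_P\!\big(\nu_{\min}/\sqrt{NT}+1/N\big)\asymp\nu_{\min}^2,\quad \widetilde\bS_{22}\asymp 1/N,\quad \widetilde\bS_{12} = O_P\!\big(\nu_{\min}/\sqrt{NT}+1/N\big).
\]
For (i), $\lambda_{\min}(\widetilde\bS_{11})\gtrsim\nu_{\min}^2$ follows from Assumption \ref{ass2.4}(ii) ($\nu_{\min}^2\gg 1/N$), while $\lambda_{\min}(\widetilde\bS_{22})\gtrsim 1/N$ follows from concentration of $\tfrac{1}{TN^2}\bW'\bU\bK\bU'\bW$ around $\tfrac{1}{N}\bV$ with $\bV=\tfrac{1}{N}\bW'\bSigma_u\bW$ combined with Assumption \ref{ass2.1}(ii); a block-matrix argument then gives $\lambda_{\min}(\widehat\bS_K)\gtrsim 1/N$. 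For (ii), block inversion yields $\bH'\widehat\bS_K^{-1}\bH = \bP_2\bD(\widetilde\bS_{11}-\widetilde\bS_{12}\widetilde\bS_{22}^{-1}\widetilde\bS_{21})^{-1}\bD\bP_2'$, the Schur complement remaining $\asymp\nu_{\min}^2$ so that its inverse is $O_P(\nu_{\min}^{-2})$; the $\bD$-sandwich then gives $\bH'\widehat\bS_K^{-1}\bH=O_P(1)$, while the $(1,2)$ row of $\widehat\bS_K^{-1}$ (of size $\widetilde\bS_{11}^{-1}\widetilde\bS_{12}\widetilde\bS_{22}^{-1}=O_P(\sqrt{N/T}/\nu_{\min}+1/\nu_{\min}^2)$) yields $\|\bH'\widehat\bS_K^{-1}\| = O_P(1/\nu_{\min}+\sqrt{N/T})$.

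Part (iii) is the main obstacle, because $\|\widehat\bS_K^{-1}-(\bH\bM\bH')^+\|$ itself is not $o_P(1)$ (as flagged in Section \ref{poet}), so one must exploit the $\bH$-sandwich. Using $(\bA+\bE)^{-1}-\bA^{-1}=-\bA^{-1}\bE(\bA+\bE)^{-1}$ with $\bA=\bD\bP_2'\bM\bP_2\bD$ and $\bE=\text{noise}_{11}-\widetilde\bS_{12}\widetilde\bS_{22}^{-1}\widetilde\bS_{21}$, the delicate ingredient is the Schur correction: although $\widetilde\bS_{22}^{-1}=O_P(N)$ amplifies noise, $\widetilde\bS_{12}$ is small enough that $\|\widetilde\bS_{12}\widetilde\bS_{22}^{-1}\widetilde\bS_{21}\| = O_P(\nu_{\min}^2/T+1/N)$. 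Combined with $\|\bA^{-1}\|=O_P(\nu_{\min}^{-2})$, the subsequent $\bD$-sandwich ($\asymp\nu_{\min}^2$) produces the raw rate $O_P(1/(\nu_{\min}\sqrt{NT})+1/(N\nu_{\min}^2)+1/T)$, which collapses via AM-GM ($\tfrac{1}{\nu_{\min}\sqrt{NT}} = \sqrt{\tfrac{1}{N\nu_{\min}^2}\cdot\tfrac{1}{T}} \leq \tfrac{1}{2N\nu_{\min}^2}+\tfrac{1}{2T}$) to the claimed $O_P(1/(N\nu_{\min}^2)+1/T)$. For the projection statement, with $\bK=\bI_T$ write $\bP_{\widehat\bF}=\tfrac{1}{T}\widehat\bF\widehat\bS_K^{-1}\widehat\bF'$ and $\bP_{\bF\bH'}=\bP_\bF=\tfrac{1}{T}\bF\bM^{-1}\bF'$, expand via $\widehat\bF=\bF\bH'+\bE$, and sandwich with $\tfrac{1}{T}\bG'(\cdot)\bG$ using $\tfrac{1}{T}\|\bG\|^2=O_P(1)$: the signal--signal piece applies the first part of (iii); the mixed pieces combine part (ii) with $\|\tfrac{1}{T}\bG'\bE\|=O_P(1/\sqrt{NT})$; and the $\bE$--$\bE$ piece uses $\|\widehat\bS_K^{-1}\|=O_P(N)$ with $\|\tfrac{1}{T}\bG'\bE\|^2=O_P(1/(NT))$ to yield $O_P(1/T)$.
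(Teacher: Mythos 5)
Your proposal is correct, and for parts (i)--(iii) it takes a genuinely different route from the paper's. Both arguments start from the same decomposition $\widehat\bF=\bF\bH'+\bE$ with $\bE=\frac{1}{N}\bU'\bW$ and the same two noise bounds $\|\frac{1}{T}\bE'\bK\bE\|=O_P(1/N)$ and $\|\frac{1}{T}\bF'\bK\bE\|=O_P(1/\sqrt{NT})$ (the paper's Lemma \ref{la.1}), but from there you rotate to the orthonormal basis $[\bP_1,\bP_{1,\perp}]$ adapted to the SVD of $\bH$ and run exact $2\times 2$ block algebra: the identity $\bH'\bP_{1,\perp}=0$ confines the $O(1/N)$-scale noise to the $(2,2)$ block, and Schur complements then deliver (i), (ii) and the first half of (iii) by a single mechanism. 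The paper instead proves (i) by a variational case analysis on the minimizing eigenvector (splitting on whether $\eta_v^2$ dominates the cross term $X\sqrt{\eta_v^2/(TN)}$), and proves (ii)--(iii) by comparing $(\bar\bH\bar\bH'+\frac{1}{N}\bS)^{-1}$ with the scalar-regularized $(\bar\bH\bar\bH'+\frac{a}{N}\bI)^{-1}$ via operator monotonicity of the inverse, where the SVD computation is explicit and the two bounds $g(c)$, $g(C)$ sandwich the quantity of interest. Your approach buys transparency --- it makes visible why $\widetilde\bS_{22}^{-1}=O_P(N)$ is harmless (it is always premultiplied by the small off-diagonal block $\widetilde\bS_{12}$), and your Schur-correction accounting together with the AM--GM collapse reproduces exactly the paper's rate $O_P(\frac{1}{N\nu_{\min}^2}+\frac{1}{T})$; the paper's approach avoids explicit block inversion at the cost of the monotonicity trick. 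Two points you should make explicit in a write-up: the lower bound $\lambda_{\min}(\widetilde\bS_{22})\gtrsim 1/N$ requires $\lambda_{\min}(\bSigma_u)$ bounded away from zero combined with Assumption \ref{ass2.1}(ii) (the paper's proof uses the same fact, invoking it as an assumption), and for $\bK=\bM_\bG$ one needs $\|\frac{1}{T}\bE'\bP_\bG\bE\|=O_P(1/(NT))$ so that projecting out $\bG$ does not disturb that $1/N$ lower bound. Your treatment of the final projection statement $\frac{1}{T}\bG'(\bP_{\widehat\bF}-\bP_{\bF\bH'})\bG$ coincides with the paper's.
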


 \begin{proof} 
 	The proof applies for both $\bK=\bI_T$ and $\bK=\bM_\bG$. In addition, the proof depends on results in the later Lemma \ref{la.1}; the   latter is proved independently which does not depend on this proposition.
	Write 
	$
	\nu_{\min}:=\nu_{\min}(\bH), $ and $  \nu_{\max}:=\nu_{\max}(\bH).
	$

 	First, it is easy to see
 	$$
 	\widehat\bF= \bF\bH'+\bE.
 	$$
 	where $\bE= (\be_1,\cdots,\be_T)'=\frac{1}{N}\bU'\bW$, which is $T\times R.$
 	Write
 	$$\bDelta:= \frac{1}{T} \E\bE' \bE+ \frac{1}{T}\bH\bF'\bK\bE+\frac{1}{T}\bE'\bK\bF\bH' +\frac{1}{T} (\bE' \bE-\E\bE'\bE)+\bDelta_1$$
 	where $\bDelta_1=0$ if $\bK=\bI_T$ and $\bDelta_1=-\frac{1}{T} \bE'\bP_\bG\bE$ if $\bK=\bM_\bG$.
 	
 	(i) We have
 	$$
 	\frac{1}{T}\widehat\bF'\bK\widehat\bF=\frac{1}{T}\bH\bF'\bK\bF\bH'  +\bDelta.
 	$$ By assumption $\lambda_{\min}(\frac{1}{T }\E \bU\bU' ) \geq c_0,$
 	so
 	$
 	\lambda_{\min}(\frac{1}{T}\E\bE'\bE)   \geq \lambda_{\min}(\frac{1}{T }\E \bU\bU' )
 	\lambda_{\min}(\frac{1}{N^2} \bW' \bW) \geq c_0N^{-1}
 	$ for some $c_0>0.$
 	In addition,  Lemma  \ref{la.1}  shows $\frac{1}{T}(\bE'\bE-\E\bE'\bE)+\bDelta_1 =O_P(\frac{1}{N\sqrt{T }})$. Hence $\| \frac{1}{T}(\bE'\bE-\E\bE'\bE)+\bDelta_1 \|\leq \frac{1}{2}\lambda_{\min}(\frac{1}{T}\E\bE'\bE) $ with large probability.  We now continue the argument conditioning on this event. 

	 Now let $\bv$ be the unit vector so that $\bv'\frac{1}{T}\widehat\bF'\bK\widehat\bF\bv=\lambda_{\min}(\frac{1}{T}\widehat\bF'\bK\widehat\bF)$ and let 
	  $$\eta_v^2:= \frac{1}{T} \bv'\bH\bF' \bK\bF\bH' \bv.$$ 
	  Because $\bv'\frac{1}{T}\widehat\bF'\bK\widehat\bF\bv=\eta_v^2+\bv'\bDelta\bv,$
 we have 	$$
 	\lambda_{\min}(\frac{1}{T}\widehat\bF' \bK\widehat\bF)
\geq\eta_v^2
 	+2\bv'\frac{1}{T}\bH\bF' \bK\bE\bv
	+ \frac{c_0}{2N} . 	$$
	 	If $\bv'\bH=0$ then $\lambda_{\min}(\frac{1}{T}\widehat\bF' \bK\widehat\bF)\geq \frac{c_0}{2N} .  $ If $\bv'\bH\neq 0$ then $\eta_v^2\neq 0$ with large probability because $\frac{1}{T}  \bF' \bK\bF$ is positive definite.  Now let
 	$$
 	X:= (\frac{\eta_v^2}{TN})^{-1/2}2\bv'\frac{1}{T}\bH\bF' \bK\bE\bv  ,\quad 2\bv'\frac{1}{T}\bH\bF' \bK\bE\bv= X\sqrt{\frac{\eta_v^2}{TN}}.
 	$$ Then 
 	$$
 	\lambda_{\min}(\frac{1}{T}\widehat\bF' \bK\widehat\bF)
 	\geq \eta_v^2
 	+X\sqrt{\frac{\eta_v^2}{TN}}+ \frac{c_0}{2N}.
 	$$

	Suppose for now  $X=O_P(1)$, a claim to be proved later.  Then consider two cases.
	
 	 In case 1,  $\eta_v^2\leq 4|X|\sqrt{\frac{\eta_v^2}{TN}}$. Then $|\eta_v|\leq 4|X| \frac{1 }{\sqrt{TN}}$ and 
 	$$
 	\lambda_{\min}(\frac{1}{T}\widehat\bF' \bK\widehat\bF)
 	\geq   \frac{c_0}{2N}
 	-|X||\eta_v| \frac{1}{\sqrt{TN}}\geq
 	\frac{c_0}{2N}-4|X|  ^2 {\frac{1 }{TN}} \geq  \frac{c_0}{4N}
 	$$
 	where the last inequality holds for $X=O_P(1)$ and as $T\to\infty$, with probability approaching one.

	 In case 2, $\eta_v^2> 4|X|\sqrt{\frac{ \eta_v^2}{TN}}$, then
 	$$
 	\lambda_{\min}(\frac{1}{T}\widehat\bF' \bK\widehat\bF)
 	\geq \eta_v^2
 	-|X|\sqrt{\frac{ \eta_v^2}{TN}}+ \frac{c_0}{2N}
 	\geq \frac{3}{4}\eta_v^2+ \frac{c_0}{2N}\geq \frac{c_0}{2N}.
 	$$
 	In both cases, $\lambda_{\min}(\frac{1}{T}\widehat\bF' \bK\widehat\bF)>c_0/N$ for some $c_0>0$ with overwhelming probability.
	
	It remains to argue $X=O_P(1)$.  By the assumption $\lambda_{\min}(\frac{1}{T}\bF'\bK\bF)>c>0$, we have 
	$$\eta_v^2\geq \lambda_{\min}(\frac{1}{T}\bF'\bK\bF)\bv'\bH\bH'\bv>c\|\bv'\bH\|^2.$$ 
	In addition,  Lemma \ref{la.1} shows $ \|\frac{1}{T}\bF'\bE\|^2 =O_P (\frac{1}{{TN}})$ and  $\|\frac{1}{T}\bG'\bE\|^2= O_P (\frac{1}{{TN}})$. With the condition  $\frac{1}{T}\|\bG\|^2=O_P(1)$,  we reach 
	$
 \|\frac{1}{T}\bF'\bM_\bG\bE\|^2\leq  O_P (\frac{1}{{TN}})+ \|\bF'\bG(\bG'\bG)^{-1}\|^2\|\frac{1}{T}\bG'\bE\|^2=O_P (\frac{1}{{TN}}).
	$
 Therefore $\|\frac{1}{T}\bF'\bK\bE\|^2= O_P (\frac{1}{{TN}})$ and consequently,
	 	$$
	|X|^2\leq 4TN \eta_v^{-2} \|\bv'\bH\|^2\|\frac{1}{T}\bF' \bK\bE\|^2
	\leq  O_P(1) \eta_v^{-2} \|\bv'\bH \|^2\leq  O_P(1)c^{-1} \|\bv'\bH\|^{-2}\|\bv'\bH \|^2=O_P(1).
	$$

 	(ii)  Write $\bar \bH:= \bH(\frac{1}{T}\bF'\bK\bF)^{1/2}$ and $\bS:=\frac{N}{T}\E\bE'\bE=\frac{1}{N}\bW'\bSigma_u\bW$. Then
 	\begin{equation}\label{eqa.1adddd}\frac{1}{T}\widehat\bF'\bK\widehat\bF=\bar \bH \bar\bH'
 	+\frac{1}{N}\bS+ \frac{1}{T}\bH\bF'\bK\bE+\frac{1}{T}\bE'\bK\bF\bH' +\bDelta_2
 	\end{equation}
 	where  we proved in (i) that $\|\bDelta_2\|=\|\frac{1}{T}(\bE'\bE-\E\bE'\bE)+\bDelta_1 \|=O_P(\frac{1}{N\sqrt{T }}).$ 
 	Also  all eigenvalues of $\bS$ are bounded away from both zero and infinity.
In addition, $\bar\bH$ is a $R\times r$ matrix with $R\geq r$, whose Moore-Penrose generalized inverse  is $\bar \bH^+=(\frac{1}{T}\bF'\bK\bF)^{-1/2} \bH^+ $.  Also,  $\bar \bH$ is of rank $r$. Let $$\bar \bH'=\bU_{\bar H}(\bD_{\bar H},0)\bE_{\bar H}'$$ be the singular value decomposition (SVD) of $\bar \bH'$, where 0 is present when $R>r$.   Since  $\lambda_{\min}(\frac{1}{T}\bF'\bK\bF)>c>0$, we have $\lambda_{\min}(\bD_{\bar H})\geq c\nu_{\min}$ where $\nu_{\min}:=\nu_{\min}(\bH)$.


 	The proof is divided into several steps.

 	Step 1. Show $\|\bar\bH'(\bar \bH\bar\bH'+\frac{a}{N}\bI)^{-j}\bar\bH\|
 	=O_P(\nu_{\min}^{-(2j-2)})$    for any fixed $a>0$ and $j=1,2.$
 	 
 	Because $\lambda_{\min}(\bD_{\bar H})\geq c\nu_{\min}$,  for $j=1,2$,  
	 	$$
 	\|\bar\bH'(\bar \bH\bar\bH'+\frac{a}{N}\bI)^{-j}\bar\bH\|
 	=\|\bU_{\bar H}(\bD_{\bar H}^2(\bD_{\bar H}^2+\frac{a}{N}\bI)^{-j},0)\bU_{\bar H}'\|
 	=\|\bD_{\bar H}^2(\bD_{\bar H}^2+\frac{a}{N}\bI)^{-j}\|\leq \|\bD_{\bar H}^{-2j+2}\|.
 	$$
 	
 	Step 2. Show  $\|\bar\bH'(\bar \bH\bar\bH'+\frac{1}{N}\bS)^{-1}\bar\bH\|=O_P(1)$.
 	
 	Let $0<a<  \lambda_{\min}(\bS)$ be a constant. Then
 	$ (\bar \bH\bar\bH'+\frac{a}{N}\bI)^{-1}-(\bar \bH\bar\bH'+\frac{1}{N}\bS)^{-1}$ is positive definite.  (This is because, if both $\bA_1$ and $\bA_2-\bA_1$ are positive definite, then so is $\bA_1^{-1}-\bA_2^{-1}$.) Let $\bv$ be a unit vector so that
 	$\bv'\bar\bH'(\bar \bH\bar\bH'+\frac{1}{N}\bS)^{-1}\bar\bH\bv=\|\bar\bH'(\bar \bH\bar\bH'+\frac{1}{N}\bS)^{-1}\bar\bH\|$. Then
 	$$
 	\|\bar\bH'(\bar \bH\bar\bH'+\frac{1}{N}\bS)^{-1}\bar\bH\|\leq
 	\bv'\bar\bH'(\bar \bH\bar\bH'+\frac{a}{N}\bI)^{-1}\bar\bH\bv
 	\leq \|\bar\bH'(\bar \bH\bar\bH'+\frac{a}{N}\bI)^{-1}\bar\bH\|.
 	$$
 	The right hand side is $O_P(1)$ due to step 1.
 	
 	Step 3. Show  $\|\bar\bH'(\bar \bH\bar\bH'+\frac{1}{N}\bS)^{-1} \|=O_P(\nu_{\min}^{-1}).$

 	Fix any $a>0$.
 	Let $\bM=\bar\bH'(\bar \bH\bar\bH'+\frac{a}{N}\bI)^{-1}.$ By step 1, $\|\bM\|
 	= \|\bar\bH'(\bar \bH\bar\bH'+\frac{a}{N}\bI)^{-2}\bar\bH\|^{1/2}=O_P(\nu_{\min}^{-1})$. So 
 	\begin{eqnarray*}
 		\|\bar\bH'(\bar \bH\bar\bH'+\frac{1}{N}\bS)^{-1} \|
 		&\leq&\|\bM\|+ \|\bar\bH'(\bar \bH\bar\bH'+\frac{1}{N}\bS)^{-1}-\bM \|\cr
 		&=^{(1)}&\|\bM\|+\|\bar\bH'(\bar \bH\bar\bH'+\frac{a}{N}\bI)^{-1} (\frac{1}{N}\bS-\frac{a}{N}\bI)   (\bar \bH\bar\bH'+\frac{1}{N}\bS)^{-1}  \|\cr
 		&\leq&\|\bM\|+\frac{C}{N}\|\bM\|  \|  (\bar \bH\bar\bH'+\frac{1}{N}\bS)^{-1}  \|\cr
 		&\leq^{(2)}& \|\bM\|(1+O_P(1))=O_P(\nu_{\min}^{-1}).
 	\end{eqnarray*}
 	   (1)   used $\bA_1^{-1}-\bA_2^{-1}=\bA_1^{-1}(\bA_2-\bA_1)\bA_2^{-1}$; (2) is from:  $\|  (\bar \bH\bar\bH'+\frac{1}{N}\bS)^{-1}  \|\leq\lambda_{\min}^{-1}(\frac{1}{N}\bS)=O_P(N)$.

 	Step 4.  Show $\|\bH'(\frac{1}{T}\widehat\bF'\bK\widehat\bF)^{-1} \|
 	=O_P(\nu_{\min}^{-1}+\sqrt{\frac{N}{T}})$.
 	
 	Let $\bA:=\bar \bH\bar\bH'+\frac{1}{N}\bS$.
By steps 2,3   $\|\bar\bH\bA^{-1}\|=O_P(\nu_{\min}^{-1})$ and $ \|\bar\bH\bA^{-1}\bar \bH\|=O_P(1). $ Now
 	\begin{eqnarray*}
 		&& \|\bar\bH'(\frac{1}{T}\widehat\bF'\bK\widehat\bF)^{-1}-\bar\bH'\bA^{-1}\|
 		= \|\bar\bH' \bA^{-1}  (\frac{1}{T}\widehat\bF'\bK\widehat\bF-\bA) (\frac{1}{T}\widehat\bF'\bK\widehat\bF)^{-1}\|\cr
 		&\leq^{(3)}& O_P(\frac{\nu_{\max}(\bH)}{\nu_{\min}(\bH)\sqrt{TN}})\| (\frac{1}{T}\widehat\bF'\bK\widehat\bF)^{-1}\|
 		=^{(4)}O_P(\frac{N}{\sqrt{NT}})=O_P(\sqrt{\frac{N}{T}}).
 	\end{eqnarray*}
 	In (3) we used $\frac{1}{T}\widehat\bF'\bK\widehat\bF-\bA=O_P(\frac{1}{N\sqrt{T}}+\|\frac{1}{T}\bH\bF'\bK\bE\|)=O_P(\frac{1}{N\sqrt{T}}+\frac{\nu_{\max}}{\sqrt{TN}})=O_P(\frac{\nu_{\max}}{\sqrt{TN}})$; in (4) we used $(\frac{1}{T}\widehat\bF'\bK\widehat\bF)^{-1}=O_P(N)$ by part (i) and $\nu_{\max}\leq C\nu_{\min}$.
 	Hence $$ \|\bar\bH'(\frac{1}{T}\widehat\bF'\bK\widehat\bF)^{-1}\|
	\leq O_P(\sqrt{\frac{N}{T}})+\|\bar\bH\bA^{-1}\|=
	O_P(\nu_{\min}^{-1}+\sqrt{\frac{N}{T}}).$$
	Thus $\|\bH'(\frac{1}{T}\widehat\bF'\bK\widehat\bF)^{-1} \|
 	\leq \|(\frac{1}{T}\bF'\bK\bF)^{-1/2}\| \|\bar\bH'(\frac{1}{T}\widehat\bF'\bK\widehat\bF)^{-1}\|,
 	$  which leads to the result for $\|\bH'(\frac{1}{T}\widehat\bF'\bK\widehat\bF)^{-1} \|
 	=O_P(\nu_{\min}^{-1}+\sqrt{\frac{N}{T}})$.
 	
 	Step 5.  show $ \bH'(\frac{1}{T}\widehat\bF'\bK\widehat\bF)^{-1}  \bH=\bH'(\frac{1}{T}\bH\bF'\bK\bF\bH'+\frac{1}{N}\bS)^{-1} \bH + O_P(\frac{1}{\nu_{\min}\sqrt{NT}}+\frac{1}{T}) .$

 	Because $\|\bar\bH\bA^{-1}\|=O_P(\nu_{\min}^{-1})$  and $ \|\bar\bH\bA^{-1}\bar \bH\|=O_P(1) $ by step 3, (\ref{eqa.1adddd}) implies
 	\begin{eqnarray*}
 		&&\|\bar \bH'(\frac{1}{T}\widehat\bF'\bK\widehat\bF)^{-1} \bar \bH- \bar\bH'\bA^{-1}\bar \bH \|
 		=\|\bar \bH'(\frac{1}{T}\widehat\bF'\bK\widehat\bF)^{-1} (\frac{1}{T}\widehat\bF'\bK\widehat\bF-\bA)   \bA^{-1}\bar \bH \|\cr
 		&\leq& \|\bar\bH '\bA^{-1}     \bar  \bH   (\frac{1}{T}\bF'\bK\bF)^{-1/2} \frac{1}{T} \bF'\bK\bE     (\frac{1}{T}\widehat\bF'\bK\widehat\bF)^{-1}\bar\bH  \|
 		+\|\bar\bH '\bA^{-1}     \frac{1}{T}\bE'\bK\bF(\frac{1}{T}\bF'\bK\bF)^{-1/2} \bar\bH'     (\frac{1}{T}\widehat\bF'\bK\widehat\bF)^{-1}\bar\bH \|\cr
		&&
 		+\|\bar\bH '\bA^{-1}     \bDelta_1    (\frac{1}{T}\widehat\bF'\bK\widehat\bF)^{-1}\bar\bH \|\cr
 		&\leq& O_P(\nu_{\min}^{-1} \frac{1}{N\sqrt{T }}+ \frac{1}{\sqrt{NT}}) \|  (\frac{1}{T}\widehat\bF'\bK\widehat\bF)^{-1}\bar\bH \|
		=^{(5)}
 		O_P(\frac{1}{\sqrt{NT}}) O_P(\nu_{\min}^{-1}+\sqrt{\frac{N}{T}}) = O_P(\frac{1}{\nu_{\min}\sqrt{NT}}+\frac{1}{T}).
 	\end{eqnarray*}
 	(5) follows from step 4 and $\nu_{\min}\gg N^{-1/2}$. Then due to $\|(\frac{1}{T}\bF'\bK\bF)^{-1/2}\|=O_P(1)$,
	$$\bH'(\frac{1}{T}\widehat\bF'\bK\widehat\bF)^{-1}  \bH=\bH'(\frac{1}{T}\bH\bF'\bK\bF\bH'+\frac{1}{N}\bS)^{-1} \bH + O_P(\frac{1}{\nu_{\min}\sqrt{NT}}+\frac{1}{T}) .$$  
 	In addition, step 3 implies $\|\bH'(\frac{1}{T}\bH\bF'\bK\bF\bH'+\frac{1}{N}\bS)^{-1} \bH\|\leq O_P(\nu_{\min}^{-1}\nu_{\max})=O_P(1)$, so
 	  $$ \|\bH'(\frac{1}{T}\widehat\bF'\bK\widehat\bF)^{-1} \bH\|=O_P(1+\frac{1}{\nu_{\min}\sqrt{NT}}+\frac{1}{T})=O_P(1).$$

 	(iii) The proof still consists of several steps.

 	Step 1.   $ \bH'(\frac{1}{T}\widehat\bF'\bK\widehat\bF)^{-1}  \bH=\bH'(\frac{1}{T}\bH\bF'\bK\bF\bH'+\frac{1}{N}\bS)^{-1} \bH + O_P(\frac{1}{\nu_{\min}\sqrt{NT}}+\frac{1}{T}).$
 	
 	It follows from  step 5 of part (ii).

 	Step 2. show $\bar\bH'( \bar\bH\bar\bH'+ \frac{1}{N}\bS)^{-1}\bar\bH
 	=\bar\bH'( \bar\bH\bar\bH')^{+}\bar\bH
 	+O_P(\frac{1}{N\nu^2_{\min}})$ where $\bar\bH=\bH(\frac{1}{T}\bF'\bK\bF)^{1/2}$.
 	Write  $\bT=\bar\bH'( \bar\bH\bar\bH'+ \frac{1}{N}\bS)^{-1}\bar\bH
 	-\bar\bH'( \bar\bH\bar\bH')^{+}\bar\bH  $. The goal is to show $\|\bT\|=O_P(\frac{1}{N\nu^2_{\min}})$. Let $\bv$ be the unit vector so that $|\bv'\bT\bv|=\|\bT\|.$
 	Define a function, for $d>0$,
 	$$
 	g(d):= \bv'\bar\bH'( \bar\bH\bar\bH'+ \frac{d}{N}\bI)^{-1}\bar\bH \bv.
 	$$ Note that there are constants $c, C>0$ so that $ \frac{c}{N}<\lambda_{\min}(\frac{1}{N}\bS)\leq \lambda_{\max}(\frac{1}{N}\bS)<  \frac{C}{N}.$
 	Then we have $g(C)< \bv'\bar\bH'( \bar\bH\bar\bH'+ \frac{1}{N}\bS)^{-1}\bar\bH \bv<g(c)$.
 	Hence
 	$$
 	|\bv'\bT\bv|\leq |g(c)-\bv'\bar\bH'( \bar\bH\bar\bH')^{+}\bar\bH \bv| +|g(C)-\bv'\bar\bH'( \bar\bH\bar\bH')^{+}\bar\bH \bv| .
 	$$
 	Recall $\bar \bH'=\bU_{\bar H}(\bD_{\bar H},0)\bE_{\bar H}'$ is the SVD of $\bar \bH'$ and $N^{-1}\lambda^{-1}_{\min}(\bD_{\bar H}^2)=o_P(1)$. Then for any  $d\in\{c, C\}$,  as $N\to\infty$,
 	$
 	g(d)=\bv'\bU_{\bar H}\bD_{\bar H}^2(\bD_{\bar H}^2+\frac{d}{N}\bI)^{-1}\bU_{\bar H}'\bv
 	\overset{P}{\longrightarrow} \bv'\bv=\bv'\bar\bH'( \bar\bH\bar\bH')^{+}\bar\bH \bv,
 	$
 	where we used
 	$\bar\bH'( \bar\bH\bar\bH')^{+}\bar\bH=\bI$, easy to see from its SVD.
 	The rate of convergence is
 	$$
 	\|\bD_{\bar H}^2(\bD_{\bar H}^2+\frac{d}{N}\bI)^{-1} -\bI\|
 	\leq  \|\bD_{\bar H}^2(\bD_{\bar H}^2+\frac{d}{N}\bI)^{-1} \frac{d}{N} \bD_{\bar H}^{-2}\|=O_P(\frac{1}{N\nu^2_{\min}}).
 	$$
 	Hence $|\bv'\bT\bv|=O_P(\frac{1}{N\nu^2_{\min}})$.
 	
 	Step 3. show $\|\bH'(\frac{1}{T}\widehat\bF'\bK\widehat\bF)^{-1}\bH- \bH'(\bH\frac{1}{T} \bF' \bK\bF\bH')^{+}\bH\|=O_P( \frac{1}{N\nu^2_{\min}} + \frac{1}{T})$.
 	By steps 1 and 2,
 	\begin{eqnarray*}
 		\bH'(\frac{1}{T}\widehat\bF'\bK\widehat\bF)^{-1}\bH&=&\bH'( \bar \bH\bar \bH
 		'+ \frac{1}{N}\bS)^{-1}\bH +O_P(\frac{1}{\nu_{\min}\sqrt{NT}}+\frac{1}{T})\cr
 		&=&(\frac{1}{T}\bF'\bK\bF)^{-1/2}\bar\bH'( \bar \bH\bar \bH
 		'+ \frac{1}{N}\bS)^{-1}\bar\bH(\frac{1}{T}\bF'\bK\bF)^{-1/2} + O_P(\frac{1}{\nu_{\min}\sqrt{NT}}+\frac{1}{T})\cr
 		&=^{(6)}&(\frac{1}{T}\bF'\bK\bF)^{-1/2}\bar\bH'( \bar\bH\bar\bH')^{+}\bar\bH (\frac{1}{T}\bF'\bK\bF)^{-1/2} +O_P( \frac{1}{N\nu^2_{\min}} + \frac{1}{\nu_{\min}\sqrt{NT}}+\frac{1}{T})\cr
 		&= & \bH'( \bar\bH\bar\bH')^{+} \bH  +O_P(\frac{1}{N\nu^2_{\min}} + \frac{1}{T}).
 	\end{eqnarray*}
 	where (6) is due to $\lambda_{\min}(\frac{1}{T}\bF'\bK\bF)>c$ and step  2.
 	
 	Step 4. show $\frac{1}{T}\bG'\bP_{\widehat\bF}\bG =\frac{1}{T}\bG'\bP_{\bF\bH'}\bG+ O_P(\frac{1}{N\nu_{\min}^2}+\frac{1}{T})$.
	
	  By  part (ii) $\|\bH'(\frac{1}{T}\widehat\bF'\bK\widehat\bF)^{-1} \|=O_P(\nu_{\min}^{-1}+\sqrt{\frac{N}{T}})$,    and that $\frac{1}{T}\bG'\bE=O_P(\frac{1}{\sqrt{NT}})$, 
 	\begin{eqnarray*}
 		\frac{1}{T}\bG'\bP_{\widehat\bF}\bG
 		&=&\frac{1}{T}\bG'\bF\bH'(\widehat\bF'\widehat\bF)^{-1}\bH\bF'\bG+
 		 \frac{1}{T}\bG'\bE  (\widehat\bF'\widehat\bF)^{-1}\bE '\bG
 		+\frac{1}{T}\bG'\bE  (\widehat\bF'\widehat\bF)^{-1} \bH\bF '\bG
 		\cr
		&&+\frac{1}{T}\bG'\bF \bH'  (\widehat\bF'\widehat\bF)^{-1}\bE '\bG\cr
 		&=&
		\frac{1}{T}\bG'\bF\bH'(\widehat\bF'\widehat\bF)^{-1}\bH\bF'\bG+O_P(\frac{1}{T}+\frac{1}{\nu_{\min}\sqrt{NT}})\cr
	&=&	
		\frac{1}{T}\bG'\bF\bH'(\bH\bF'\bF\bH')^+\bH\bF'\bG+O_P(\frac{1}{N\nu_{\min}^2}+\frac{1}{T}),
 	\end{eqnarray*}
 	where the last equality follows from step 3.
 \end{proof}

The proof of Lemma \ref{la.1} below does \text{not} rely on Proposition \ref{la.2}, as it does not involve $\bH$ or $\widehat\bF$. Also, let  $\bE= (\be_1,\cdots,\be_T)'=\frac{1}{N}\bU'\bW$. In addition, we shall use the following inequality $\tr(\bW'\bSigma\bW)\leq R\|\bW\|^2\|\bSigma\|$ for any semipositive definite matrix $\bSigma$, whose simple proof is as follows: let $\bv_i$ be the $i$ th  eigenvector of $\bW'\bSigma\bW$. Then
$$
\tr(\bW'\bSigma\bW)=\sum_{i=1}^R\bv_i'\bW'\bSigma\bW\bv_i\leq \|\bSigma\|\sum_{i=1}^R\|\bW\bv_i\|^2\leq \|\bSigma\|\|\bW\|^2R.
$$
  \begin{lem}
  \label{la.1} For any    $R\geq 1$, ($R$ can be either smaller, equal to or larger than $r$),\\
  (i) $\|\frac{1}{T}\E \bE'\bE\|\leq \frac{C}{N}$ and $\|\bE\|=O_P(\sqrt{\frac{T}{N}})$. \\
  (ii) $\E\|\frac{1}{T}\bF'\bE\|^2\leq O (\frac{1}{{TN}})$, $\E\|\frac{1}{T}\bG'\bE\|^2\leq O (\frac{1}{{TN}})$,  here $\bG$ is defined as in Section \ref{sec:3.1}
  \\
  (iii) $\|\frac{1}{T}(\bE'\bE-\E\bE'\bE) \|\leq O_P(\frac{1}{N\sqrt{T}})$, $ \|\frac{1}{T}\bE'\bP_\bG\bE\|=O_P(\frac{1}{NT}). $\\
  (iv) $\|\frac{1}{N} \bU'\bW\| \leq O_P(\sqrt{\frac{T}{N}}).$ 
  \end{lem}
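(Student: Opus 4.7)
The plan is to handle the four claims in the order (i), (iv), (ii), (iii), since (iv) is just a rewording of (i) and the second bound in (iii) uses (ii). Throughout, the only facts from Assumption \ref{ass2.1} that I need are $\max_{k,i}|w_{ki}|\leq C$ and the elementary consequence $\|\bW\|^2\leq\|\bW\|_F^2\leq RNC^2=O(N)$. For $\|\bSigma_u\|=\|\E\bu_t\bu_t'\|$, I will use the tower inequality to upper-bound it by $\E\|\E(\bu_t\bu_t'|\bF)\|$, which is $O(1)$ by Assumption \ref{ass2.3}(ii).

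For (i), I would write $\bE'\bE=\tfrac{1}{N^2}\bW'\bU\bU'\bW$, so stationarity gives $\tfrac{1}{T}\E\bE'\bE=\tfrac{1}{N^2}\bW'\bSigma_u\bW$, and the two facts above yield $\|\tfrac{1}{T}\E\bE'\bE\|\leq \tfrac{1}{N^2}\|\bW\|^2\|\bSigma_u\|\leq C/N$. The operator-norm bound for $\bE$ follows from $\|\bE\|^2\leq\tr(\bE'\bE)$ by taking expectations: $\E\|\bE\|^2\leq RT\|\tfrac{1}{T}\E\bE'\bE\|=O(T/N)$, and Markov's inequality delivers $\|\bE\|=O_P(\sqrt{T/N})$. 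Part (iv) is the same statement since $\bE=\tfrac{1}{N}\bU'\bW$.

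For (ii), conditioning on $\bF$ and using $\E(\bu_t|\bff_t)=0$ from Assumption \ref{ass2.2}(i), the expected squared $(j,k)$ entry of $\tfrac{1}{T}\bF'\bE$ equals
\[
\frac{1}{T^2N^2}\E\Bigl[\sum_{t,s}f_{jt}f_{js}\,\bw_k'\E(\bu_t\bu_s'|\bF)\bw_k\Bigr]\leq \frac{\|\bw_k\|^2}{T^2N^2}\sum_{t,s}\E\bigl[\|\bff_t\|\|\bff_s\|\|\E(\bu_t\bu_s'|\bF)\|\bigr].
\]
The double sum is $O(T)$ by Assumption \ref{ass2.3}(ii), and $\|\bw_k\|^2=O(N)$, giving $O(1/(TN))$ per entry; summing over the finite $r\times R$ grid and using $\|\cdot\|\leq\|\cdot\|_F$ proves the bound. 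The argument for $\bG$ is identical once one substitutes $\E(\bU|\bG)=0$ and $\tfrac{1}{T}\|\bG\|^2=O_P(1)$ from the hypotheses of Proposition \ref{la.2}, replacing $\bff_t$ by $\bg_t$ throughout.

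For (iii), I would expand the centred $(k,l)$ entry as $\tfrac{1}{TN^2}\sum_t\bw_k'(\bu_t\bu_t'-\E\bu_t\bu_t')\bw_l$ and compute its variance as
\[
\frac{1}{T^2N^4}\sum_{t,s,i,q,m,n}w_{ki}w_{lq}w_{km}w_{ln}\,\Cov(u_{it}u_{qt},u_{ms}u_{ns}\mid\bF).
\]
This is the main technical step: a naive application of Assumption \ref{ass2.3}(i) — which controls the absolute covariance sum only after fixing the pair corresponding to $(i,m)$ — yields $\sum|\Cov|\leq CN^3T$ and so the weaker rate $O_P(1/\sqrt{NT})$. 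To recover the sharper rate $O_P(1/(N\sqrt{T}))$ stated in the lemma, I would first carry out the two inner $(q,n)$ sums against the weights to extract a factor $O(N)$ from diversification (matching what one would obtain in the cross-sectionally independent benchmark, where Isserlis reduces the sum to $O(N^2T)$), and then apply the assumption to the remaining $(i,m,t,s)$ sum. After the element-wise variance $O(1/(N^2T))$ is in hand, the finite-dimensional operator bound follows. The second bound in (iii) is much easier: writing $\bP_\bG=\bG(\bG'\bG)^{-1}\bG'$ and using $\lambda_{\min}(\bG'\bG/T)\geq c$, I bound
\[
\bigl\|\tfrac{1}{T}\bE'\bP_\bG\bE\bigr\|\leq \|(\bG'\bG)^{-1}\|\cdot\tfrac{1}{T}\|\bG'\bE\|^2 = O_P(1/T)\cdot O_P(T/N) = O_P(1/(NT)),
\]
where $\|\bG'\bE\|^2=T^2\|\tfrac{1}{T}\bG'\bE\|^2$ is controlled by part (ii).
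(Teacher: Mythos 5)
Your treatment of (i), (ii), (iv) and of the second claim in (iii) coincides with the paper's own proof: the same use of $\|\bW\|_F^2=O(N)$ and $\tr(\bW'\bSigma\bW)\le R\|\bW\|^2\|\bSigma\|$ for (i) and (iv), the same conditional second-moment computation against Assumption \ref{ass2.3}(ii) for (ii), and the same $\|(\bG'\bG)^{-1}\|\cdot\frac{1}{T}\|\bG'\bE\|^2$ factorization for $\|\frac{1}{T}\bE'\bP_\bG\bE\|$. No issues there.

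The first claim of (iii) is where your argument has a genuine gap, although you have correctly put your finger on a real subtlety. You are right that summing the literal Assumption \ref{ass2.3}(i) over its two fixed indices gives $\sum_{i,j,q,v,t,s}|\Cov(u_{it}u_{qt},u_{js}u_{vs})|\le CN^3T$, hence an entrywise variance of order $1/(TN)$ and only the rate $O_P(1/\sqrt{NT})$. But your proposed repair --- ``carry out the two inner $(q,n)$ sums against the weights to extract a factor $O(N)$ from diversification'' --- cannot work: the variance is bounded by a sum of \emph{absolute} covariances weighted by bounded constants, so there is no cancellation mechanism to exploit, and a sum of $N^2$ nonnegative terms cannot be reduced to $O(N)$ without an additional decay hypothesis; the Isserlis computation in the independent benchmark reflects that most of those covariances \emph{vanish}, not that the weights average them away. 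What the paper actually does at this step is invoke the condition in the normalized form $\frac{1}{TN^2}\sum_{t,s\le T}\sum_{i,j,m,n\le N}|\Cov(u_{it}u_{jt},u_{ms}u_{ns})|<C$, i.e.\ it requires the total fourth-order covariance sum to be $O(TN^2)$ rather than the $O(TN^3)$ guaranteed by the displayed Assumption \ref{ass2.3}(i); with that, the entrywise variance is $O(1/(TN^2))$ and the stated rate follows in one line. So to complete the proof you should impose this strengthened summability explicitly (it holds under cross-sectional independence and, more generally, under sufficiently sparse or summable cross-sectional dependence) rather than appeal to diversification; settling for $O_P(1/\sqrt{NT})$ is not an option downstream, since Proposition \ref{la.2}(i) needs this perturbation to be $o_P(1/N)$, which $1/\sqrt{NT}$ is not unless $N=o(T)$.
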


  \begin{proof}
  
  (i)  By the assumption  $\|\frac{1}{T}\E\bU\bU'\| = \| \E\bu_t\bu_t'\|\leq  \E \| \E(\bu_t\bu_t'|\bF)\|<C$. Thus
  $$
  \|\frac{1}{T}\E \bE'\bE\|
  =\frac{1}{N^2}\|\bW'\frac{1}{T}\E\bU\bU'\bW\|
 \leq  \frac{1}{N^2}\|\bW\|^2 \leq \frac{C}{N}.
  $$
  Also, $\E\|\bE\|^2\leq \tr\E \bE'\bE \leq R\|\E\bE'\bE\| \leq \frac{CT}{N}$.

  (ii) Let $f_{k,t}$ be the $k$ th entry of $\bff_t$. 
  By the assumption  $ \frac{1}{T } \sum_{s=1}^T \sum_{t=1}^T\E\| \bff_{t}  \| \|\bff_{s} \|    \|\E(\bu_t \bu_s' |\bF)\|<C $, 
  \begin{eqnarray*}
 \E  \|\frac{1}{T}\bF'\bE\|^2&=& 
  \frac{1}{T^2N^2}\E \| \sum_{t=1}^T\bW'\bu_t\bff_t'  \|^2
  \leq   \sum_{k=1}^r\frac{1}{T^2N^2} \sum_{s=1}^T \sum_{t=1}^T\E f_{k,t} f_{k,s}  \E(  \bu_s' \bW  \bW'\bu_t  |  \bF) \cr
  &\leq& \sum_{k=1}^r\frac{1}{T^2N^2} \sum_{s=1}^T \sum_{t=1}^T\E f_{k,t} f_{k,s}    \tr \bW'  \E(\bu_t \bu_s' |\bF)\bW   \cr
    &\leq& \sum_{k=1}^r\frac{1}{T^2N^2} \sum_{s=1}^T \sum_{t=1}^T\E| f_{k,t} f_{k,s} |  \| \bW\|_F ^2 \|\E(\bu_t \bu_s' |\bF)\| \cr
     &\leq&  \frac{C}{T^2N } \sum_{s=1}^T \sum_{t=1}^T\E\| \bff_{t}  \| \|\bff_{s} \|    \|\E(\bu_t \bu_s' |\bF)\| \cr
&\leq& \frac{C}{TN}.
  \end{eqnarray*}
  Similarly, $\E\|\frac{1}{T}\bG'\bE\|^2\leq O (\frac{1}{{TN}})$.

  (iii) By the assumption that $\frac{1}{T N^2} \sum_{t,s\leq T}  \sum_{i,j,m,n\leq N}    |\Cov (  u_{it}u_{jt},    u_{ms}u_{ns} )|<C$,
    \begin{eqnarray*}
&&\E\|\frac{1}{T}(\bE'\bE-\E\bE'\bE) \|^2
\leq \sum_{k,q\leq R}\E (\frac{1}{TN^2}\sum_{t=1}^T \sum_{i,j\leq N} w_{k,i}w_{q,j} (u_{it}u_{jt}-\E u_{it}u_{jt}))^2\cr
&\leq& \frac{C}{T N^2}\frac{1}{T N^2} \sum_{t,s\leq T}  \sum_{i,j,m,n\leq N}    |\Cov (  u_{it}u_{jt},    u_{ms}u_{ns} )|\leq \frac{C}{T N^2}.
  \end{eqnarray*}
   Next, by part (ii)
    \begin{eqnarray*}
  \|\frac{1}{T}\bE'\bP_\bG\bE\|&\leq &  \|\frac{1}{T}\bE' \bG\|^2\|(\frac{1}{T}\bG'\bG)^{-1}\|
  \leq O_P(\frac{1}{TN}).
    \end{eqnarray*}

   (iv) $\E\|\frac{1}{N} \bU'\bW\|^2\leq \frac{1}{N^2}\tr\E\bW'\bU\bU'\bW\leq \frac{CT}{N^2}\|\bW\|_F^2\leq\frac{CT}{N}$, where we used the assumption that $\|\E\bu_t\bu_t'\|<C.$

  \end{proof}

\subsection{Proof of Theorem \ref{t2.1}}
\begin{proof}
	We shall first show the convergence of $\bP_{\widehat\bF\bM}-\bP_\bF$, and then the convergence of $\bP_{\widehat\bF}\bP_\bF-\bP_\bF$.
	
	First,  from the SVD $\bH'=\bU_{H}(\bD_{ H},0)\bE_{ H}'$, it is straightforward to verify that $\bM'=\bU_{H}(\bD_{ H}^{-1},0)\bE_{ H}'$. Then from Proposition \ref{la.2}, $\lambda_{\min}(\frac{1}{T}\bM'\widehat\bF'\widehat\bF\bM)\geq c_0 N^{-1}\lambda_{\min}(\bD_H^{-2})$ with large probability. Hence
$\bP_{\widehat\bF\bM}$ is well defined. 

Next, it is easy to see $\bH'  ( \bH\bH')^{+}\bH=\bI$ when $R\geq r.$ Then $\widehat\bF= \bF\bH'+\bE$ implies $\widehat\bF\bM-\bF =\bE( \bH\bH')^{+}\bH$ with $\bM=( \bH\bH')^{+}\bH.$
	Since $\|( \bH\bH')^{+}\bH\|=O_P(\nu_{\min}^{-1})$, we have 
	$$ \frac{1}{\sqrt{T}}\|\widehat\bF\bM-\bF\|= O_P(\frac{1}{\sqrt{N}}\nu_{\min}^{-1}),\quad  \frac{1}{T}\|\bF'(\widehat\bF\bM-\bF)\|= O_P(\frac{1}{\sqrt{NT}}\nu_{\min}^{-1})
	$$
	where the second statement uses Lemma \ref{la.1}. Then
  $\|\frac{1}{T}\bM'\widehat\bF'\widehat\bF\bM-\frac{1}{T}\bF'\bF\|=O_P(\frac{1}{\sqrt{NT}}\nu_{\min}^{-1}+\frac{1}{N}\nu_{\min}^{-2})$.
	Thus $(\frac{1}{T}\bM'\widehat\bF'\widehat\bF\bM)^{-1}=O_P(1)$ and 
	\begin{equation}\label{eqa.2new}
	\|(\frac{1}{T}\bM'\widehat\bF'\widehat\bF\bM)^{-1}-(\frac{1}{T}\bF'\bF)^{-1}\|=O_P(\frac{1}{\sqrt{NT}}\nu_{\min}^{-1}+\frac{1}{N}\nu_{\min}^{-2}).
	\end{equation}
	 The triangular inequality then implies
	$
\|	 \bP_{\widehat\bF\bM}-\bP_\bF\|
\leq O_P(\frac{1}{\sqrt{N}}\nu_{\min}^{-1}).
	$
	
	Finally,  $\bP_{\widehat \bF} \bP_{\widehat\bF\bM}=\bP_{\widehat\bF\bM}$ gives
$$
\|\bP_{\widehat\bF}\bP_\bF-\bP_\bF\|
\leq \|\bP_{\widehat\bF}(\bP_\bF-\bP_{\widehat\bF\bM})\|
+ \|\bP_{\widehat\bF\bM}-\bP_\bF\|\leq O_P(\frac{1}{\sqrt{N}}\nu_{\min}^{-1}).
$$

\end{proof}

 \subsection{Proof  of Theorem \ref{t3.1} }

\begin{proof}
	Here we assume $R\geq r$.
 	We let $\bz_t=(\bff_t'\bH',\bg_t')'$ and $\bdelta=(\balpha'\bH^+,\bbeta')'.$ Then $\bdelta'\bz_t=y_{t+h|t}$.
	First, we have the following expansion
$$
\widehat\bdelta'\widehat\bz_T-\bdelta'\bz_T= (\widehat\bdelta-\bdelta)' \widehat\bz_T +\balpha'\bH^+(\widehat\bff_T-\bH\bff_T).
$$	
Now $\widehat\bdelta=(\widehat\bZ'\widehat\bZ)^{-1}\widehat\bZ'\bY$, where $\bY$ is the $(T-h)\times 1$ vector of $y_{t+h}$, and $\widehat\bZ$ is the $(T-h)\times\dim(\bdelta)$ matrix of $\widehat\bz_t$, $t=1,\cdots,T-h$. Also recall that $\be_t=\widehat\bff_t-\bH\bff_t=\frac{1}{N}\bW'\bu_t$.  Then
\begin{eqnarray*}
	\widehat\bz_T'(\widehat\bdelta-\bdelta)&=&\widehat\bz_T'  (\frac{1}{T}\widehat\bZ'\widehat\bZ)^{-1}\sum_{d=1}^4a_d,\text{ where }\cr
	a_1&=&(\frac{1}{T}\sum_t\varepsilon_t\be_t', 0)' ,\quad
		a_2= \frac{1}{T}\sum_t\bz_t\varepsilon_t  \cr
	a_3&=&(-\balpha'\bH^+\frac{1}{T}\sum_t \be_t\be_t',0)'   ,\quad 	a_4=-\frac{1}{T}\sum_t \bz_t\be_t'\bH^{+'}\balpha .
\end{eqnarray*}
On the other hand, let $\bG$ be the $(T-h)\times \dim(\bg_t)$ matrix of $\{\bg_t: g\leq T-h\}$. We have, by the matrix block inverse formula, for the operator $\bM_\bA:= \bI-\bP_{\bA},$
$$
( \frac{1}{T}\widehat\bZ'\widehat\bZ)^{-1}=
\begin{pmatrix}
\bA_1& \bA_2\\
\bA_2'& \bA_3
\end{pmatrix},\quad \text{ where }
\begin{pmatrix}
\bA_1\\
\bA_2\\
\bA_3
\end{pmatrix}=\begin{pmatrix}
(\frac{1}{T}\widehat\bF'\bM_\bG\widehat\bF)^{-1}\\
-  	\bA_1 \widehat\bF'\bG(\bG'\bG)^{-1} \\
(\frac{1}{T} \bG'\bM_{\widehat\bF}\bG)^{-1}
\end{pmatrix}.
$$
Then $\widehat\bz_T'  (\frac{1}{T}\widehat\bZ'\widehat\bZ)^{-1}= (\widehat\bff_T'\bA_1+\bg_T'\bA_2',\widehat\bff_T'\bA_2+\bg_T'\bA_3)$. This implies
\begin{eqnarray*}
	\widehat\bz_T'(\widehat\bdelta-\bdelta)&=&  (\widehat\bff_T'\bA_1+\bg_T'\bA_2')\frac{1}{T}\sum_t[\be_t\varepsilon_t -\be_t\be_t'\bH^{+'}\balpha  ]\cr
	&&+(\widehat\bff_T'\bA_1\bH+\bg_T'\bA_2'\bH)\frac{1}{T}\sum_t[\bff_t\varepsilon_t -\bff_t\be_t'\bH^{+'}\balpha]
  \cr
	&&+ (\widehat\bff_T'\bA_2+\bg_T'\bA_3)\frac{1}{T}\sum_t[\bg_t\varepsilon_t -\bg_t\be_t'\bH^{+'}\balpha ] .
\end{eqnarray*}

It is easy to show  $\|\frac{1}{T}\sum_t\bff_t\varepsilon_t\|+ \|\frac{1}{T}\sum_t\bg_t\varepsilon_t\|=O_P(\frac{1}{\sqrt{T}})   $ and
$\|\frac{1}{T}\sum_t\be_t\varepsilon_t\|=O_P(\frac{1}{\sqrt{TN}})$.   Also Lemma \ref{la.1} gives $\frac{1}{T}\sum_t\be_t\be_t'=\frac{1}{T}\bE'\bE=O_P(\frac{1}{N})$, $\frac{1}{T}\sum_t\bff_t\be_t=\frac{1}{T}\bF'\bE=O_P(\frac{1}{\sqrt{TN}})$, and $\frac{1}{T}\sum_t\bg_t\be_t=\frac{1}{T}\bF'\bE=O_P(\frac{1}{\sqrt{TN}})$. Together with Lemma \ref{lb.1},
\begin{eqnarray*}
	\widehat\bz_T'(\widehat\bdelta-\bdelta)
&=&\|\widehat\bff_T'\bA_1 +\bg_T'\bA_2\|O_P(\frac{1}{\sqrt{TN}}+\frac{1}{N\nu_{\min}})
 \cr
 &&
 +\|\widehat\bff_T'\bA_1\bH+\bg_T'\bA_2'\bH\|  O_P(\frac{1}{\sqrt{T}})
 + \|\widehat\bff_T'\bA_2+\bg_T'\bA_3\|  O_P(\frac{1}{\sqrt{T}})
 \cr
&=&O_P(\frac{1}{\sqrt{T}}+\frac{1}{\sqrt{N}\nu_{\min}}).
\end{eqnarray*}

Finally, as $\|\bH^+\|=O_P(\nu_{\min}^{-1})$,
$\balpha'\bH^+(\widehat\bff_T-\bH\bff_T)=O_P(\nu_{\min}^{-1})\|\be_T\|=O_P(\nu_{\min}^{-1}N^{-1/2}).$


 \end{proof}

\begin{lem}\label{lb.1} For all $R\geq r$, (i)
$\|\bA_1\widehat\bff_T\|+\|\bA_2\|=O_P(\sqrt{N}),  $ and \\ $\|\bH'\bA_1\widehat\bff_T\|+\|\bH'\bA_2\|+\|
\bA_2'\widehat\bff_T\|+\|\bA_3\|=O_P(1) $.
\end{lem}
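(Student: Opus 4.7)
The plan is to prove Lemma \ref{lb.1} by systematically decomposing each matrix using $\widehat\bF=\bF\bH'+\bE$ and $\widehat\bff_T=\bH\bff_T+\be_T$, then invoking the three weighted-inverse bounds of Proposition \ref{la.2} together with the moment estimates of Lemma \ref{la.1}. The central observation is that most quantities involve either a factor of $\bH$ (which slows the growth of $\bA_1$-type terms via $\|\bH'\bA_1\|=O_P(\nu_{\min}^{-1}+\sqrt{N/T})$ and $\|\bH'\bA_1\bH\|=O_P(1)$) or a factor of $\bE$ (which contributes a small $O_P((NT)^{-1/2})$ through $\|\bE'\bG/T\|$). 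The $O_P(\sqrt{N})$ bounds for $\bA_1\widehat\bff_T$ and $\bA_2$ come from allowing one ``unprotected'' factor of $\|\bA_1\|=O_P(N)$ compensated by a $O_P(N^{-1/2})$ factor from either $\be_T$ or $\bE'\bG/T$.

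First I would apply Proposition \ref{la.2} with $\bK=\bM_\bG$: its hypothesis $\lambda_{\min}(T^{-1}\bF'\bM_\bG\bF)>c$ is exactly part of Assumption \ref{ass3.1}(iii), so the proposition gives $\|\bA_1\|=O_P(N)$, $\|\bH'\bA_1\|=O_P(\nu_{\min}^{-1}+\sqrt{N/T})$, and $\|\bH'\bA_1\bH\|=O_P(1)$. For $\bA_1\widehat\bff_T=\bA_1\bH\bff_T+\bA_1\be_T$, the first piece is bounded by $\|\bA_1\bH\|\|\bff_T\|=O_P(\nu_{\min}^{-1}+\sqrt{N/T})$ and the second by $\|\bA_1\|\|\be_T\|=O_P(N\cdot N^{-1/2})=O_P(\sqrt N)$; under Assumption \ref{ass2.4}(ii) ($\nu_{\min}^{-2}\ll N$) both are $O_P(\sqrt N)$. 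Pre-multiplying the same decomposition by $\bH'$ upgrades the first bound to $O_P(1)$ and, via $\|\bH'\bA_1\be_T\|\leq \|\bH'\bA_1\|\|\be_T\|=O_P((\nu_{\min}\sqrt N)^{-1}+T^{-1/2})=O_P(1)$, delivers $\|\bH'\bA_1\widehat\bff_T\|=O_P(1)$.

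Next I would handle $\bA_2=-\bA_1\widehat\bF'\bG(\bG'\bG)^{-1}$ by writing $\bA_1\widehat\bF'=\bA_1\bH\bF'+\bA_1\bE'$ and then multiplying by $T^{-1}\bG$ on the right, with the inverse $(T^{-1}\bG'\bG)^{-1}=O_P(1)$ absorbed. Lemma \ref{la.1} gives $\|T^{-1}\bF'\bG\|=O_P(1)$ and $\|T^{-1}\bE'\bG\|=O_P((NT)^{-1/2})$, so $\|\bA_2\|\leq \|\bA_1\bH\|\cdot O_P(1)+\|\bA_1\|\cdot O_P((NT)^{-1/2})=O_P(\nu_{\min}^{-1}+\sqrt{N/T})=O_P(\sqrt N)$. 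Pre-multiplying by $\bH'$ again replaces $\|\bA_1\|$ and $\|\bA_1\bH\|$ by $\|\bH'\bA_1\bH\|=O_P(1)$ and $\|\bH'\bA_1\|=O_P(\nu_{\min}^{-1}+\sqrt{N/T})$ respectively, yielding $\|\bH'\bA_2\|=O_P(1)+O_P((\nu_{\min}\sqrt{NT})^{-1}+T^{-1})=O_P(1)$. The bound $\|\bA_2'\widehat\bff_T\|=O_P(1)$ follows by writing $\bA_2'\widehat\bff_T=-(\bG'\bG)^{-1}\bG'(\bF\bH'+\bE)\bA_1\widehat\bff_T$; the $\bF$-term is $O_P(1)\|\bH'\bA_1\widehat\bff_T\|=O_P(1)$ using $\|(\bG'\bG)^{-1}\bG'\bF\|=O_P(1)$, while the $\bE$-term is bounded by $O_P((NT)^{-1/2})\|\bA_1\widehat\bff_T\|=O_P(T^{-1/2})$.

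For the remaining piece $\bA_3=(T^{-1}\bG'\bM_{\widehat\bF}\bG)^{-1}$, I would use Proposition \ref{la.2}(iii), which gives $T^{-1}\bG'\bP_{\widehat\bF}\bG=T^{-1}\bG'\bP_{\bF\bH'}\bG+O_P((N\nu_{\min}^2)^{-1}+T^{-1})=T^{-1}\bG'\bP_{\bF\bH'}\bG+o_P(1)$. Since $\text{col}(\bF\bH')=\text{col}(\bF)$ (because $\bH$ has rank $r$), we have $\bP_{\bF\bH'}=\bP_{\bF}$, and Assumption \ref{ass3.1}(iii) then gives $\lambda_{\min}(T^{-1}\bG'\bM_{\widehat\bF}\bG)>c/2$ with probability approaching one, hence $\|\bA_3\|=O_P(1)$. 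The main technical obstacle is the bookkeeping: each of the four quantities must be split so that the ``slow'' factor $\|\bA_1\|=O_P(N)$ only multiplies objects that carry a compensating $O_P(N^{-1/2})$ factor (an $\be_T$, an $\bE'\bG/T$, or nothing at all when the target rate is $\sqrt N$), while the remaining terms go through the $\bH'$-weighted bounds of Proposition \ref{la.2}; once this accounting is set up, every step is routine.
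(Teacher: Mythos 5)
Your proof is correct and follows essentially the same route as the paper: decompose $\widehat\bF=\bF\bH'+\bE$ and $\widehat\bff_T=\bH\bff_T+\be_T$, invoke Proposition \ref{la.2} with $\bK=\bM_\bG$ for the bounds on $\bA_1$, $\bH'\bA_1$, $\bH'\bA_1\bH$ and for $\frac{1}{T}\bG'(\bP_{\widehat\bF}-\bP_{\bF\bH'})\bG=o_P(1)$, and use $\|\frac{1}{T}\bE'\bG\|=O_P((NT)^{-1/2})$ from Lemma \ref{la.1}. The only cosmetic difference is that for $\bA_2'\widehat\bff_T$ you expand $\bA_2'$ via $\widehat\bF=\bF\bH'+\bE$ whereas the paper splits $\widehat\bff_T=\bH\bff_T+\be_T$ and uses $\|\bA_2\|\|\be_T\|=O_P(1)$; both give the stated $O_P(1)$.
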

 \begin{proof} 
  First, by Proposition \ref{la.2},   $\|\bA_1\|=O_P(N)$ and $\|\bA_1\bH\|=O_P(\nu_{\min}^{-1}+\sqrt{\frac{N}{T}})$, and $\frac{1}{T}\bE'\bG=O_P(\frac{1}{\sqrt{NT}})$
 \begin{eqnarray*}
 \bA_1\widehat\bff_T&=&(\frac{1}{T}\widehat\bF'\bM_\bG\widehat\bF)^{-1}\be_T
 +(\frac{1}{T}\widehat\bF'\bM_\bG\widehat\bF)^{-1}\bH\bff_T
 =O_P(\sqrt{N})\cr
\bH'\bA_1\widehat\bff_T &=&\bH'(\frac{1}{T}\widehat\bF'\bM_\bG\widehat\bF)^{-1}\be_T
+\bH'(\frac{1}{T}\widehat\bF'\bM_\bG\widehat\bF)^{-1}\bH\bff_T=O_P(1) \cr
-\bA_2&=&\bA_1 \widehat\bF'\bG(\bG'\bG)^{-1}
=\bA_1\bE'\bG(\bG'\bG)^{-1}
+\bA_1 \bH \bF'\bG(\bG'\bG)^{-1} =O_P(\sqrt{\frac{N}{T}}+\nu_{\min}^{-1})\cr
-\bH'\bA_2&=&
\bH'\bA_1\bE'\bG(\bG'\bG)^{-1}
+\bH'\bA_1 \bH \bF'\bG(\bG'\bG)^{-1} = O_P(1) \cr
\bA_2'\widehat\bff_T&=&\bA_2'\bH\bff_T+\bA_2'\be_T=O_P(1).
 \end{eqnarray*}
Finally,   it follows from Proposition \ref{la.2} that
$
\frac{1}{T}\bG'(\bP_{\widehat\bF}-\bP_{\bF\bH'})\bG= O_P( \frac{1}{T}+ \frac{1}{N\nu^2_{\min}} )
$. Hence   $\|\bA_3\|=O_P(1)$ since $\lambda_{\min}(\frac{1}{T}\bG'\bM_{\bF\bH'}\bG)>c$.


\end{proof}

 \subsection{Proof  of Theorem \ref{t3.2} }

Let
$\widehat\bvarepsilon_g$,  $\widehat\bvarepsilon_y$,$ \bvarepsilon_g$,  $ \bvarepsilon_y$, $\bY$, $\bG$  and $\bfeta$ be $T\times 1$ vectors of $\widehat\bvarepsilon_{g,t}$,  $\widehat\bvarepsilon_{y,t}$, $ \bvarepsilon_{g,t}$,  $ \bvarepsilon_{y,t}$, $y_t$, $\bg_t$ and  $\eta_t$.
 Let $\widehat J$ denote the index set of components in $\widehat \bu_t$ that are selected by \textit{either} $\widehat\bgamma$ \textit{or} $\widehat\btheta$. Let $\widehat \bU_{\widehat J}$ denote the $N\times |J|_0$ matrix of  rows of $\widehat\bU$ selected by $J$. Then
 $$
 \widehat \bvarepsilon_y=\bM_{\widehat\bU_{\widehat J}} \bM_{\widehat\bF} \bY,\quad
  \widehat \bvarepsilon_g=\bM_{\widehat\bU_{\widehat J}} \bM_{\widehat\bF} \bG.
 $$
 \subsubsection{The case $r\geq 1$.}\label{sec:thereisfactor}

 \begin{proof}
From Lemma \ref{lc.4}  
\begin{eqnarray}
\sqrt{T}(\widehat\bbeta- \bbeta)&=&
\sqrt{T}[(\widehat\bvarepsilon_g'\widehat\bvarepsilon_g)^{-1}
 \widehat\bvarepsilon_g'( \widehat\bvarepsilon_y- \bvarepsilon_y)
+(\widehat\bvarepsilon_g'\widehat\bvarepsilon_g)^{-1} \widehat\bvarepsilon_g'\bfeta
+(\widehat\bvarepsilon_g'\widehat\bvarepsilon_g)^{-1}
 \widehat\bvarepsilon_g'(  \bvarepsilon_g- \widehat\bvarepsilon_g)\bbeta]
\cr
&=&O_P(1)\frac{1}{\sqrt{T}} \widehat\bvarepsilon_g'( \widehat\bvarepsilon_y- \bvarepsilon_y)
+ O_P(1)\frac{1}{\sqrt{T}} \widehat\bvarepsilon_g'(  \bvarepsilon_g- \widehat\bvarepsilon_g)
+O_P(1)\frac{1}{\sqrt{T}} \bfeta'(  \widehat\bvarepsilon_g-\bvarepsilon_g)
\cr
&&+(\frac{1}{T} \bvarepsilon_g' \bvarepsilon_g)^{-1}  \frac{1}{\sqrt{T}}\bvarepsilon_g'\bfeta\cr
&=&\sigma_g^{-2}  \frac{1}{\sqrt{T}}\bvarepsilon_g'\bfeta+o_P(1)\overset{d}{\longrightarrow} \mathcal N(0,\sigma_g^{-4} \sigma_{\eta g}^2 ).
 \end{eqnarray}
 In the above, we used the condition that  
 $ 
 |J|_0^4+|J|^2_0\log^2 N =o(T)
 $ 
,  $ 
T|J|_0^4=o(N^2\min\{1,  \nu^{4}_{\min}|J|_0^4 \} )
 $ 
 and  $ 
 \sqrt{\log N}  |J|_0^2=o(N\nu^2_{\min}) $, whose sufficient conditions are  
$ 
T|J|_0^4=o(N^2\min\{1,  \nu^{4}_{\min}|J|_0^4 \} )
 $ 
 and $ 
 |J|_0^4 \log^2 N =o(T)
 $.

 In addition,
 $
\widehat\sigma_{\eta, g}^{-1}\widehat\sigma_g^{2}  {\sqrt{T}(\widehat\bbeta-\bbeta)}\overset{d}{\longrightarrow}\mathcal N(0,1),
 $ follows from  $\widehat\sigma_g^2:=\frac{1}{T}\widehat\bvarepsilon_g'\widehat\bvarepsilon_g\overset{P}{\longrightarrow} \sigma_g^2$.

 \end{proof}

  \begin{prop}\label{proc.1} Suppose 
 $ T=O(  \nu_{\min}^4N^2\log N )$, 
   $|J|_0^2T=O( \nu_{\min}^2N^2\log N ) $, 
$  |J|_0^2=O(N \nu_{\min}^2\log N  )$ and $|J|_0^2\log N=O(T)$, $|J|_0^2=o(N)$ For all $R\geq r$, \\
 (i) $\frac{1}{T}\|  \widehat \bU'\btheta- \widehat  \bU'\widetilde\btheta\|^2=O_P(|J|_0\frac{\log N}{T})$ and $\|\widetilde\btheta-\btheta\|_1=O_P(|J|_0\sqrt{\frac{\log N}{T}})$. \\
 (ii) $|\widehat J|_0=O_P(|J|_0)$.
 \end{prop}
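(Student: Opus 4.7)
The plan is to adapt the classical LASSO consistency analysis (in the spirit of \cite{Bickeletal} and \cite{belloni2014inference}) to the factor-augmented setting, where both the regressors $\widehat\bu_t$ and the offset $\widehat\balpha_g'\widehat\bff_t$ are estimated rather than observed. Starting from the first-order optimality of $\widetilde\btheta$ in Algorithm \ref{ag4.1} and substituting $\bg_t=\balpha_g'\bff_t+\btheta'\bu_t+\bvarepsilon_{g,t}$, I obtain the basic inequality
\[
\frac{1}{T}\|\widehat\bU'\bv\|^2\;\leq\;\frac{2}{T}\sum_{t=1}^T\widehat\bu_t'\bv\bigl[\bvarepsilon_{g,t}+(\balpha_g'\bff_t-\widehat\balpha_g'\widehat\bff_t)+(\bu_t-\widehat\bu_t)'\btheta\bigr]+\tau(\|\btheta\|_1-\|\widetilde\btheta\|_1),
\]
where $\bv:=\widetilde\btheta-\btheta$ and $\tau=C\sqrt{\log N/T}$.

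The crux is a uniform coordinatewise noise bound
\[
\max_{i\leq N}\Bigl|\frac{1}{T}\sum_t\widehat u_{it}\bigl[\bvarepsilon_{g,t}+(\balpha_g'\bff_t-\widehat\balpha_g'\widehat\bff_t)+(\bu_t-\widehat\bu_t)'\btheta\bigr]\Bigr|\leq\tau/4
\]
with probability tending to one. To establish this I would decompose $\widehat u_{it}=u_{it}+(\widehat u_{it}-u_{it})$, with $\widehat u_{it}-u_{it}=\bb_i'\bff_t-\widehat\bb_i'\widehat\bff_t$. The dominant piece $\frac{1}{T}\sum_t u_{it}\bvarepsilon_{g,t}$ is directly controlled by Assumption \ref{a4.1}(iv); the residual pieces involve products of factor-estimation errors with $\bu_t$ and $\bvarepsilon_{g,t}$, and the third summand in brackets additionally picks up $\|\btheta\|_1\lesssim|J|_0\|\btheta\|_\infty$ after Cauchy--Schwarz. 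The factor-estimation rates needed, $\|\widehat\bff_t-\bH\bff_t\|$ and $\max_i\|\widehat\bb_i-\bH^{+\prime}\bb_i\|$, come from Theorem \ref{t2.1} and Proposition \ref{la.2} together with Assumption \ref{a4.1}(iv). The rate conditions $|J|_0^2\log N=O(T)$, $|J|_0^2 T=O(\nu_{\min}^2N^2\log N)$, $|J|_0^2=O(N\nu_{\min}^2\log N)$, and $T=O(\nu_{\min}^4N^2\log N)$ are precisely what is required to absorb each residual piece into the target order $\sqrt{\log N/T}$.

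Once the noise is dominated by $\tau/4$, the usual subgradient manipulations yield the cone condition $\|\bv_{J^c}\|_1\leq 3\|\bv_J\|_1$ together with $\frac{1}{T}\|\widehat\bU'\bv\|^2+\tau\|\bv\|_1/2\leq 2\tau\|\bv_J\|_1$. Applying the sparse eigenvalue condition to $\frac{1}{T}\widehat\bU\widehat\bU'$, whose sparse eigenvalues inherit from those of $\frac{1}{T}\bU\bU'$ up to a $o_P(1)$ perturbation (using $|J|_0^2=o(N)$ and the aforementioned rates), gives $\frac{1}{T}\|\widehat\bU'\bv\|^2\geq c_1\|\bv_J\|^2$ on the cone. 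This yields $\|\bv_J\|=O_P(\tau\sqrt{|J|_0})$, hence $\|\bv\|_1=O_P(|J|_0\sqrt{\log N/T})$ and $\frac{1}{T}\|\widehat\bU'\bv\|^2=O_P(|J|_0\log N/T)$, which is part (i).

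For part (ii), the LASSO KKT conditions give $|\frac{1}{T}\widehat\bU_j'(\bg-\widehat\balpha_g'\widehat\bF-\widehat\bU'\widetilde\btheta)|=\tau$ for every $j\in\widehat J\setminus J$. Summing the squared subgradient over $j\in\widehat J$, inserting the residual decomposition, and bounding by $\phi_{\max}(l_T|J|_0)$ produces a quadratic inequality of the form $|\widehat J|_0\tau^2\lesssim\phi_{\max}\bigl(\frac{1}{T}\|\widehat\bU'\bv\|^2+\text{noise}^2\bigr)$; plugging in the prediction-error rate from (i) yields $|\widehat J|_0=O_P(|J|_0)$. The main obstacle throughout is the uniform noise-control step: the factor-augmented regressors introduce coupled estimation errors whose uniform bounds must be traded off against the sparsity $|J|_0$, and this trade-off is precisely what shapes the rate conditions in the proposition.
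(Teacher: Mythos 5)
Your proposal follows essentially the same route as the paper: the basic inequality from $L(\widetilde\btheta)\leq L(\btheta)$, domination of the score $\max_{i\leq N}|\frac{1}{T}\sum_t\widehat u_{it}(\bvarepsilon_{g,t}+d_t)|$ by a fraction of $\tau$ via the factor-estimation error bounds (the paper's Lemmas \ref{lc.0} and \ref{la.1new}, built on Proposition \ref{la.2} and Assumption \ref{a4.1}(iv)), the cone condition, and transfer of the sparse eigenvalue condition from $\frac{1}{T}\bU\bU'$ to $\frac{1}{T}\widehat\bU\widehat\bU'$ via the $\|\cdot\|_\infty$ perturbation bound. Your KKT-based sketch for part (ii) is exactly the standard argument the paper defers to by citing \cite{belloni2014inference} and \cite{hansen2018fac}, so the two proofs agree in substance throughout.
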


 \begin{proof}
 (i) Let $
 L(\btheta):=  \frac{1}{T}\sum_{t=1}^T(\bg_t-\widehat\balpha_g'\widehat\bff_t-\btheta'\widehat\bu_t)^2+  {\tau}\|\btheta\|_1,
 $
 $$
d_t=  \balpha_g' \bff_t -\widehat\balpha_g'\widehat\bff_t +(\bu_t-\widehat\bu_t)'\btheta,\quad \bDelta=\btheta-\widetilde\btheta.
 $$
 Then $  \bg_t= \balpha_g'\bff_t +\btheta'\bu_t+\varepsilon_{g,t},$ and $ L(\widetilde \btheta)\leq  L(\btheta)$ imply
 $$
   \frac{1}{T}\sum_{t=1}^T[(\widehat\bu_t'\bDelta)^2+2(\varepsilon_{g,t}+d_t)\widehat\bu_t'\bDelta ] +  {\tau}\|\widetilde\btheta\|_1
  \leq   {\tau}\|\btheta\|_1.
 $$  It follows from Lemma \ref{lc.2} that $ \|\frac{1}{T}\widehat\bU \bvarepsilon_{g}\|_\infty\leq O_P( \sqrt{\frac{\log N}{T}}) $. Also Lemma \ref{lc.0} implies  that
 \begin{eqnarray*}
 \| \frac{1}{T}\sum_{t=1}^Td_t\widehat\bu_t\|_\infty
 &\leq& \|\frac{1}{T}\widehat\bU\bE\bH^{+'}\balpha\|_\infty+ \|\frac{1}{T}\widehat\bU\bE (\bH^{+'}\balpha_g-\widehat\balpha_g) \|_\infty
 +\|\frac{1}{T}\widehat\bU \bF\bH'(\bH^{+'}\balpha_g-\widehat\balpha_g)\|_\infty\cr
 &&+\|\frac{1}{T}\btheta'(\widehat\bU-\bU) \widehat \bU'\|_\infty\cr
 &\leq&
  O_P(|J|_0\sqrt{\frac{\log N}{TN}} +|J|_0\frac{\log N}{T} +\frac{1}{N\nu^2_{\min}} +\nu^{-1}_{\min}\sqrt{\frac{\log N}{TN}}   
  + \frac{ |J|_0}{N\nu_{\min}}+    \frac{|J|_0}{\nu_{\min}\sqrt{NT}} ).    \end{eqnarray*}   
  Thus the ``score" satisfies $\|\frac{1}{T}\sum_{t=1}^T 2(\bvarepsilon_{g,t}+d_t)\widehat\bu_t'\|_\infty\leq \tau/2 $ for sufficiently large $C>0$ in $\tau=C\sigma\sqrt{\frac{\log N}{T}}$ with probability arbitrarily close to one, given $ T=O(  \nu_{\min}^4N^2\log N )$, 
   $|J|_0^2T=O( \nu_{\min}^2N^2\log N ) $, 
  $ |J|_0^2=O(N \nu_{\min}^2\log N  )$ and $|J|_0^2\log N=O(T)$.
Then by the standard argument in the lasso literature,
$$
\frac{1}{T}\sum_{t=1}^T(\widehat\bu_t'\bDelta)^2+\frac{\tau}{2}\|\bDelta_{J^c}\|_1\leq\frac{3\tau}{2}\|\bDelta_J\|_1.
$$
Meanwhile, by the restricted eigenvalue condition and Lemma \ref{lc.0},
$$\frac{1}{T}\sum_{t=1}^T(\widehat\bu_t'\bDelta)^2\geq \frac{1}{T}\sum_{t=1}^T( \bu_t'\bDelta)^2-\|\bDelta\|_1^2\|\frac{1}{T}\widehat\bU\widehat\bU'-\bU\bU'\|_\infty
\geq \|\bDelta\|_2^2 (\phi_{\min}-o_P(1))
$$
where the last inequality follows from $|J|_0 O_P(\nu_{\min}^{-2}\frac{1}{N}+\frac{\log N}{T} )=o_P(1)$ (Lemma \ref{la.1new}).
From here, the desired convergence results follow  from the standard argument in the lasso literature, we omit details for brevity, and refer to, e.g., \cite{hansen2018fac}.

(ii) The proof of $|\widehat J|_0=O_P(|J|_0)$ also follows from the  standard argument in the lasso literature, we omit details but refer to the proof of Proposition D.1 of \cite{hansen2018fac} and \cite{belloni2014inference}.

 \end{proof}

\begin{lem}\label{la.1new}
	(i) $ \|\frac{1}{T}      \bE'        \bU'\|_\infty=O_P(\sqrt{\frac{\log N}{TN}}+\frac{1}{N}) $
	\\
	(ii)   $\|\frac{1}{T}            \bE' \bP_{\widehat\bF}               \bE       \|= O_P(\frac{1}{N}   )$, $\|\frac{1}{T}        \bE'   \bP_{\widehat\bF}      \bU'\|_\infty  =O_P(\sqrt{\frac{\log N}{TN}}+\frac{1}{N}) $ ,  \\
	(iii)
	$ \|\frac{1}{T}(\widehat\bU-\bU)(\widehat\bU-\bU)'\|_\infty
	+ 2 \|\frac{1}{T}(\widehat\bU-\bU)\bU'\|_\infty= O_P(\nu_{\min}^{-2}\frac{1}{N}+\frac{\log N}{T} ).$\\
	(iv) $ \|\frac{1}{T}\widehat\bU\widehat\bU'- \frac{1}{T} \bU \bU'\|_\infty=  O_P(\nu_{\min}^{-2}\frac{1}{N}+\frac{\log N}{T} )$.
\end{lem}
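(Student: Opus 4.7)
The plan is to first derive the residual identity $\widehat\bU - \bU = \bB\bF'\bM_{\widehat\bF} - \bU\bP_{\widehat\bF}$ from $\widehat\bB\widehat\bF' = \bX\bP_{\widehat\bF}$, and then exploit the algebraic constraint that $\widehat\bF = \bF\bH' + \bE$ places on $\bM_{\widehat\bF}$. The ingredients I will combine are the operator-norm bounds in Lemma \ref{la.1}, the weighted-inverse bounds $\|\bH'(\frac{1}{T}\widehat\bF'\widehat\bF)^{-1}\bH\| = O_P(1)$ and $\|\bH'(\frac{1}{T}\widehat\bF'\widehat\bF)^{-1}\| = O_P(\nu_{\min}^{-1} + \sqrt{N/T})$ from Proposition \ref{la.2}(ii), and the entry-wise concentration supplied by Assumption \ref{a4.1}(iv).

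For (i) I would expand $\frac{1}{T}\bE'\bU' = \frac{1}{TN}\bW'\bU\bU'$; its $(k,i)$-entry equals $\frac{1}{TN}\sum_{j,t}w_{k,j}u_{jt}u_{it}$, which I split into its centered part (bounded by $O_P(\sqrt{\log N/(TN)})$ via the third inequality of Assumption \ref{a4.1}(iv)) and the mean $\frac{1}{N}\sum_j w_{k,j}\,\E(u_{jt}u_{it}) = O(1/N)$ using weak cross-sectional dependence of $\bSigma_u$. Since $R$ is bounded, $\|\cdot\|_\infty$ reduces to $\max_{k,i}|\cdot|$. For the first half of (ii), submultiplicativity gives $\|\bE'\bP_{\widehat\bF}\bE/T\| \le \|\bE\|^2/T = O_P(1/N)$ directly from Lemma \ref{la.1}(i). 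For the second half I would write $\frac{1}{T}\bE'\bP_{\widehat\bF}\bU' = (\frac{1}{T}\bE'\widehat\bF)(\frac{1}{T}\widehat\bF'\widehat\bF)^{-1}(\frac{1}{T}\widehat\bF'\bU')$ and substitute $\widehat\bF = \bF\bH' + \bE$ on both sides to obtain a four-term expansion; each term is bounded entry-wise by applying Proposition \ref{la.2}(ii) to the middle factor, Assumption \ref{a4.1}(iv) to $\max_i\|\frac{1}{T}\bF'\bU'_{:,i}\| = O_P(\sqrt{\log N/T})$, and part (i) to $\frac{1}{T}\bE'\bU'$.

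For (iii) the key algebraic identity is
\begin{equation*}
\bM_{\widehat\bF}\bF = -\bM_{\widehat\bF}\bE\bM, \qquad \bM := (\bH\bH')^+\bH,
\end{equation*}
obtained by applying $\bP_{\widehat\bF}\widehat\bF = \widehat\bF$ to $\widehat\bF = \bF\bH' + \bE$ and right-multiplying by $\bM$, using $\bH'\bM = \bI_r$ for $R \ge r$. Since $\bM_{\widehat\bF}\bP_{\widehat\bF} = 0$, the cross terms vanish and
\begin{equation*}
\tfrac{1}{T}(\widehat\bU - \bU)(\widehat\bU - \bU)' = \tfrac{1}{T}\bB\bF'\bM_{\widehat\bF}\bF\bB' + \tfrac{1}{T}\bU\bP_{\widehat\bF}\bU'.
\end{equation*}
The first summand is controlled via $\|\bF'\bM_{\widehat\bF}\bF/T\| \le \|\bM\|^2\|\bE'\bE/T\| = O_P(1/(N\nu_{\min}^2))$ combined with the boundedness of $\|\bb_i\|$ from Assumption \ref{ass2.2}(ii). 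The second summand is bounded entry-wise by the same four-term expansion as in (ii), yielding $O_P(\log N/T + 1/N)$. The cross piece $(\widehat\bU - \bU)\bU'/T = \bB\bF'\bM_{\widehat\bF}\bU'/T - \bU\bP_{\widehat\bF}\bU'/T$ is handled analogously: using $\bM_{\widehat\bF}\bF = -\bM_{\widehat\bF}\bE\bM$ the first piece reduces to parts (i) and (ii) modulated by $\|\bM\| = O(\nu_{\min}^{-1})$. Part (iv) follows at once from (iii) via the decomposition $\widehat\bU\widehat\bU' - \bU\bU' = (\widehat\bU - \bU)(\widehat\bU - \bU)' + (\widehat\bU - \bU)\bU' + \bU(\widehat\bU - \bU)'$.

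The principal obstacle will be the entry-wise (rather than operator-norm) control of $\bU\bP_{\widehat\bF}\bU'/T$: a naive bound through $\|(\frac{1}{T}\widehat\bF'\widehat\bF)^{-1}\| = O_P(N)$ is far too loose. The resolution is to force the $\bH$ factors out of the inverse so that the tight $O_P(1)$ weighted bound of Proposition \ref{la.2}(ii) can absorb the apparent $O_P(N)$ blow-up, combined with $\max_i\|\frac{1}{T}\sum_t u_{it}\bff_t\| = O_P(\sqrt{\log N/T})$. Mixed cross terms such as $\nu_{\min}^{-1}\sqrt{\log N/(TN)}$ or $\nu_{\min}^{-1}/N$ are then folded into $O(1/(N\nu_{\min}^2) + \log N/T)$ through the elementary $ab \le (a^2+b^2)/2$.
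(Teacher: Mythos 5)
Your proposal is correct and follows essentially the same route as the paper's proof: since $\bM'=\bH'(\bH\bH')^{+}=\bH^{+}$, your decomposition $\widehat\bU-\bU=\bB\bF'\bM_{\widehat\bF}-\bU\bP_{\widehat\bF}=-\bB\bH^{+}\bE'\bM_{\widehat\bF}-\bU\bP_{\widehat\bF}$ coincides with the expansion \eqref{eqa.2add}, and the term-by-term bounds invoke the same three ingredients (Lemma \ref{la.1}, Proposition \ref{la.2}(ii), and Assumption \ref{a4.1}(iv)). The only difference is presentational: you use $\bM_{\widehat\bF}\bP_{\widehat\bF}=0$ and $\bM_{\widehat\bF}\bF=-\bM_{\widehat\bF}\bE\bM$ to cancel cross terms up front, whereas the paper simply bounds every term of the expanded product.
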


\begin{proof} Let $\widehat\bF=(\widehat\bff_1,\cdots,\widehat\bff_T)'$. In addition,  $\widehat\bB-\bB\bH^+=  -\bB\bH^+ \bE' \widehat\bF(\widehat\bF'\widehat\bF)^{-1}
	+ \bU\bE(\widehat\bF'\widehat\bF)^{-1}
	+ \bU \bF\bH'(\widehat\bF'\widehat\bF)^{-1}.$
	Therefore,
	\begin{eqnarray}\label{eqa.2add}
	\bU-\widehat\bU&=&\widehat\bB\widehat\bF'-\bB\bF'
	=(\widehat\bB-\bB\bH^+)\widehat\bF'
	+\bB\bH^+\bE'\cr
	&=&-\bB\bH^+ \bE' \widehat\bF(\widehat\bF'\widehat\bF)^{-1}\widehat\bF'
	+ \bU\bE(\widehat\bF'\widehat\bF)^{-1}\widehat\bF'
	+ \bU \bF\bH'(\widehat\bF'\widehat\bF)^{-1}\widehat\bF'+\bB\bH^+\bE'.
	\end{eqnarray} 
	(i) We have \begin{eqnarray*}
		\| \frac{1}{T} \bU\bE\|_\infty& \leq&  \sum_{k\leq r}\max_{i\leq N}|\frac{1}{TN}\sum_t \sum_j(u_{it}u_{jt}-\E u_{it}u_{jt} )w_{k,j}|
		+O(\frac{1}{N}) =O_P(\sqrt{\frac{\log N}{TN}}+\frac{1}{N})\cr
	\end{eqnarray*}
	
	(ii) By Proposition  \ref{la.2} ,  Lemma \ref{la.1} ,  $\nu_{\min}\gg N^{-1/2}$, and $\|\frac{1}{T}\bF'\bU'       \|_\infty=O_P(\sqrt{\frac{\log N}{T}})$
	\begin{eqnarray*}
		\|\frac{1}{T}            \bE' \bP_{\widehat\bF}               \bE       \|&\leq&
		\|\frac{1}{T}            \bE' \bE(\widehat\bF'\widehat\bF)^{-1}\bE  '            \bE       \|
		+  \|\frac{2}{T}            \bE' \bE(\widehat\bF'\widehat\bF)^{-1}\bH \bF  '            \bE       \|
		+ \|\frac{1}{T}            \bE'  \bF\bH'(\widehat\bF'\widehat\bF)^{-1}\bH\bF  '            \bE       \| \cr
		&\leq& O_P(\frac{1}{N}   ) \cr
		\|\frac{1}{T}        \bE'   \bP_{\widehat\bF}      \bU'\|_\infty &\leq &
		\|\frac{1}{T}        \bE'  \bE( \widehat\bF' \widehat\bF)^{-1} \bE'     \bU'\|_\infty
		+  \|\frac{1}{T}        \bE'  \bE( \widehat\bF' \widehat\bF)^{-1} \bH\bF'      \bU'\|_\infty
		\cr
		&&+ \|\frac{1}{T}        \bE'  \bF\bH'( \widehat\bF' \widehat\bF)^{-1} \bE'     \bU'\|_\infty
		+  \|\frac{1}{T}        \bE'  \bF\bH'( \widehat\bF' \widehat\bF)^{-1} \bH\bF'      \bU'\|_\infty\cr
		&\leq&O_P(\sqrt{\frac{\log N}{TN}}+\frac{1}{N}    ).
	\end{eqnarray*}

	(iii) We have $\|\bH^+\|=O(\nu_{\min}^{-1})$. Also, $\|\widehat\bF(\widehat\bF'\widehat\bF)^{-1}\widehat\bF'\|\leq 1$. In addition, by Lemma
	\ref{la.2},
	$\|(\widehat\bF'\widehat\bF)^{-1}\widehat\bF'\|^2
	=\|  (\widehat\bF'\widehat\bF)^{-1}\|\leq  O_P(\frac{N}{T})
	$ and that
	$\|\bH'(\widehat\bF'\widehat\bF)^{-1}\widehat\bF'\|^2=\|\bH'(\widehat\bF'\widehat\bF)^{-1}\bH\| =O_P(\frac{1}{T}).
	$
	Next, by Lemma \ref{la.1},  $\|\bE\|=O_P(\sqrt{\frac{T}{N}})$, and $\max_i\|\bb_i\|<C.$ 
	Substitute the expansion (\ref{eqa.2add}),  and by Proposition  \ref{la.2},  \begin{eqnarray*}
		&&
		\|\frac{1}{T}(\widehat\bU-\bU)(\widehat\bU-\bU)'\|_\infty
		+ 2 \|\frac{1}{T}(\widehat\bU-\bU)\bU'\|_\infty \cr
		&\leq&   \|\frac{2}{T}      \bB\bH^+\bE'        \bU'\|_\infty+
		\|\frac{1}{T}         \bB\bH^+\bE'              \bE\bH^{+'}\bB'   \|_\infty  +      \|\frac{3}{T}       \bU\bE  (\widehat\bF'\widehat\bF)^{-1}   \bE' \bU'      \|_\infty   \cr
		&&+\|\frac{4}{T}           \bB\bH^+ \bE' \bE(\widehat\bF'\widehat\bF)^{-1}   \bE' \bU'         \|_\infty     +\|\frac{4}{T}           \bB\bH^+ \bE' \bE (\widehat\bF'\widehat\bF)^{-1} \bH \bF'\bU'         \|_\infty  \cr
		&&+ \|(\frac{6}{T}       \bU\bE +\frac{3}{T}         \bU \bF\bH') (\widehat\bF'\widehat\bF)^{-1} \bH \bF'\bU'       \|_\infty
		+\|\frac{4}{T}           \bB\bH^+ \bE'  \bF \bH'(\widehat\bF'\widehat\bF)^{-1} (\bH \bF'\bU'   + \bE' \bU'  )     \|_\infty  \cr
		&& + \|\frac{2}{T}       \bB\bH^+ \bE'   \bP_{\widehat\bF}      \bU'\|_\infty    +\|\frac{3}{T}           \bB\bH^+ \bE' \bP_{\widehat\bF}               \bE \bH^{+'} \bB'       \|_\infty  \cr
		&\leq&   \|\frac{C}{T}      \bE'        \bU'\|_\infty O_P(\nu_{\min}^{-1})+
		\|\frac{C}{T}         \bE'              \bE   \| O_P(\nu_{\min}^{-2})  +   N   \|\frac{C }{T}       \bU\bE        \|_\infty   ^2
		+N\|\frac{C}{T}         \bE' \bE \|  \|\frac{1}{T}   \bE' \bU'         \|_\infty O_P(\nu_{\min}^{-1}) \cr
		&&   + O_P(\nu_{\min}^{-1})\|\frac{C}{T}            \bE' \bE \| \| (\widehat\bF'\widehat\bF)^{-1} \bH\| \| \bF'\bU'         \|_\infty 
		+ \|\frac{6}{T}       \bU\bE  \|_{\infty}\| (\widehat\bF'\widehat\bF)^{-1} \bH \|\|\bF'\bU'       \|_\infty  \cr
		&& + \| \frac{3}{T}         \bU \bF\|_\infty\|\bH'  (\widehat\bF'\widehat\bF)^{-1} \bH \| \|\bF'\bU'       \|_\infty
		+O_P(\nu_{\min}^{-1})\|\frac{4}{T}           \bE'  \bF\| \| \bH'(\widehat\bF'\widehat\bF)^{-1}  \bH\| \|\bF'\bU'       \|_\infty
		\cr
		&&      +O_P(\nu_{\min}^{-1})\|\frac{4}{T}          \bE'  \bF \| \|\bH'(\widehat\bF'\widehat\bF)^{-1} \| \|  \bE' \bU'       \|_\infty 
		+  O_P(\nu_{\min}^{-1}) \|\frac{C}{T}      \bE'   \bP_{\widehat\bF}      \bU'\|_\infty    + O_P(\nu_{\min}^{-2})\|\frac{C}{T}            \bE' \bP_{\widehat\bF}               \bE       \|         \cr
		&=& O_P(\nu_{\min}^{-2}\frac{1}{N}+\frac{\log N}{T} ).
	\end{eqnarray*}
	

	Also,
	$
	\|\frac{1}{T}\widehat\bU\widehat\bU'- \frac{1}{T} \bU \bU'\|_\infty \leq
	\|\frac{1}{T}(\widehat\bU-\bU)(\widehat\bU-\bU)'\|_\infty
	+ 2 \|\frac{1}{T}(\widehat\bU-\bU)\bU'\|_\infty \leq    O_P(\nu_{\min}^{-2}\frac{1}{N}+\frac{\log N}{T} ).
	$
	
\end{proof}

 \begin{lem}\label{lc.0}
 For all $R\geq r$, \\
(i)  $ \|\frac{1}{T}\btheta'(\widehat\bU-\bU) \widehat \bU'\|_\infty\leq O_P(\frac{\log N}{T}+\frac{1}{N\nu_{\min}^2})|J|_0.
$ \\
(ii) $\|\frac{1}{T}\bE'   \bP_{\widehat\bF}  \bF\|= O_P(\frac{1}{N\nu_{\min}}   +\frac{1}{\sqrt{NT}})
 $,  $   \|\frac{1}{T}    \bU\bP_{\widehat \bF} \bF\|_\infty= O_P(\sqrt{\frac{\log N}{T}}+\frac{1}{N\nu_{\min}}) $. \\
(iii)
    $\|\frac{1}{T}      \bE'       \widehat \bU'\|_\infty\leq O_P(\sqrt{\frac{\log N}{TN}}+\frac{1}{N\nu_{\min}})$,   $\|\frac{1}{T}      \bF'       \widehat \bU'\|_\infty\leq O_P(\sqrt{\frac{\log N}{T}}+\frac{1}{N\nu_{\min}^2}  )$,  \\
    (iv) $\|\frac{1}{T}\btheta'\bU\bE\|=  |J|_0O_P(\frac{1}{N}+\frac{1}{\sqrt{NT}})$,
$\|\frac{1}{T}\btheta'\bU\bF\|=O_P(\sqrt{\frac{|J|_0}{T}})$, \\
(v) $\widehat\balpha_g-\bH^{+'}\balpha_g=|J|_0O_P(1+\sqrt{\frac{N}{T}})+O_P(\nu_{\min}^{-1})$, $\bH'(\widehat\balpha_g-\bH^{+'}\balpha_g)=O_P( \nu_{\min}^{-1}\frac{|J|_0}{N} +\sqrt{\frac{|J|_0}{T}}+ \nu_{\min} ^{-2}\frac{1}{N})$.

 \end{lem}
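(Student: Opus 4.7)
My plan is to treat the five items in turn, using as the main engine the decomposition
\[
\widehat\bU-\bU =-\bB\bH^+ \bE' \bP_{\widehat\bF}+ \bU\bE(\widehat\bF'\widehat\bF)^{-1}\widehat\bF' + \bU \bF\bH'(\widehat\bF'\widehat\bF)^{-1}\widehat\bF'+\bB\bH^+\bE'
\]
from (A.2add), combined with the ``weighted inverse'' bounds of Proposition~\ref{la.2}, the basic norm bounds of Lemma~\ref{la.1}, and the entry-wise/sparsity bounds from Lemma~\ref{la.1new} and Assumption~\ref{a4.1}(iv). Throughout I will repeatedly exploit the sparsity of $\btheta$ via $\|\btheta'\bA\|_\infty \le \|\btheta\|_1\|\bA\|_\infty\le C|J|_0\|\bA\|_\infty$, and the fact that $\bH'\bH^{+\prime}=\bI_r$ so that $\bF=\bF\bH'\bH^{+\prime}$.

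For (i) I will write $\btheta'(\widehat\bU-\bU)\widehat\bU'=\btheta'(\widehat\bU-\bU)\bU'+\btheta'(\widehat\bU-\bU)(\widehat\bU-\bU)'$, take $\ell_\infty$ entry-wise in the second index, and apply $\|\btheta'\bA\|_\infty\le C|J|_0\|\bA\|_\infty$ together with Lemma~\ref{la.1new}(iii). For (iv) the first bound follows from the same sparsity trick applied to Lemma~\ref{la.1new}(i), while for $\|\frac{1}{T}\btheta'\bU\bF\|$ I will bound the second-moment of this vector directly: by the weak-dependence and moment hypotheses of Assumption~\ref{a4.1}, $\E\|\frac{1}{T}\bU'\btheta\bF\|_F^2\le C\|\btheta\|^2/T\lesssim|J|_0/T$, giving the stated rate.

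For (ii), I will substitute $\bP_{\widehat\bF}=\widehat\bF(\widehat\bF'\widehat\bF)^{-1}\widehat\bF'$ together with $\widehat\bF=\bF\bH'+\bE$ to produce the four cross terms $\bE'\bE(\widehat\bF'\widehat\bF)^{-1}\bE'\bF$, $\bE'\bE(\widehat\bF'\widehat\bF)^{-1}\bH\bF'\bF$, $\bE'\bF\bH'(\widehat\bF'\widehat\bF)^{-1}\bE'\bF$, $\bE'\bF\bH'(\widehat\bF'\widehat\bF)^{-1}\bH\bF'\bF$, each divided by appropriate factors of $T$. Using $\|\frac{1}{T}\bE'\bE\|=O_P(1/N)$, $\|\frac{1}{T}\bF'\bE\|=O_P(1/\sqrt{NT})$, $\|(\frac{1}{T}\widehat\bF'\widehat\bF)^{-1}\|=O_P(N)$, and the crucial Proposition~\ref{la.2}(ii) bounds $\|\bH'(\frac{1}{T}\widehat\bF'\widehat\bF)^{-1}\|=O_P(\nu_{\min}^{-1}+\sqrt{N/T})$ and $\|\bH'(\frac{1}{T}\widehat\bF'\widehat\bF)^{-1}\bH\|=O_P(1)$, I will collect the dominant $\frac{1}{N\nu_{\min}}+\frac{1}{\sqrt{NT}}$ term. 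For $\|\frac{1}{T}\bU\bP_{\widehat\bF}\bF\|_\infty$ I will use the same expansion, but now take entry-wise $\ell_\infty$ in the $\bU$-side using $\|\frac{1}{T}\bU\bE\|_\infty$ (Lemma~\ref{la.1new}(i)) and Assumption~\ref{a4.1}(iv) giving $\|\frac{1}{T}\bU\bF\|_\infty=O_P(\sqrt{\log N/T})$.

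For (iii) I will write $\widehat\bU=\bU+(\widehat\bU-\bU)$ and use the expansion (A.2add). The term $\frac{1}{T}\bE'\bU'$ is handled by Lemma~\ref{la.1new}(i); the remaining contributions are controlled by combining the operator bounds above with Proposition~\ref{la.2}(ii) to manage the $\bH^+$ factors. A parallel argument handles $\|\frac{1}{T}\bF'\widehat\bU'\|_\infty$, again using $\|\frac{1}{T}\bF'\bU'\|_\infty=O_P(\sqrt{\log N/T})$ for the leading piece and $\|\frac{1}{T}\bF'\bP_{\widehat\bF}\bU'\|_\infty+\|\frac{1}{T}\bF'\bE\bH^{+\prime}\bB'\|_\infty$ for the rest, with the $\nu_{\min}^{-2}/N$ piece arising from the double $\bH^+$ in the $\bB\bH^+\bE'\bP_{\widehat\bF}$ term. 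Finally for (v), inserting $\bg_t=\balpha_g'\bff_t+\btheta'\bu_t+\varepsilon_{g,t}$ and using $\bF=\widehat\bF\bH^{+\prime}\bH^{+}\text{-adjustment}$—more precisely, $\widehat\bF'\bF\balpha_g=\widehat\bF'\widehat\bF\bH^{+\prime}\balpha_g-\widehat\bF'\bE\bH^{+\prime}\balpha_g$—yields
\[
\widehat\balpha_g-\bH^{+\prime}\balpha_g=-(\widehat\bF'\widehat\bF)^{-1}\widehat\bF'\bE\bH^{+\prime}\balpha_g+(\widehat\bF'\widehat\bF)^{-1}\widehat\bF'\bU'\btheta+(\widehat\bF'\widehat\bF)^{-1}\widehat\bF'\bvarepsilon_g.
\]
The unweighted rate uses the crude $\|(\frac{1}{T}\widehat\bF'\widehat\bF)^{-1}\|=O_P(N)$ together with the sparsity of $\btheta$ on $\frac{1}{T}\widehat\bF'\bU'\btheta$ via parts (iii)--(iv); the sharper $\bH'$-weighted rate is the main obstacle and the reason Proposition~\ref{la.2}(ii) was proved: multiplying through by $\bH'$ turns $\|\bH'(\widehat\bF'\widehat\bF)^{-1}\widehat\bF'\|$ into a much smaller quantity, and the $\nu_{\min}^{-1}|J|_0/N$ and $\nu_{\min}^{-2}/N$ terms arise from the two $\bH^+$ factors in $\bH'(\widehat\bF'\widehat\bF)^{-1}\widehat\bF'\bE\bH^{+\prime}\balpha_g$, while $\sqrt{|J|_0/T}$ comes from the variance bound in (iv) applied to $\bH'(\widehat\bF'\widehat\bF)^{-1}\widehat\bF'\bU'\btheta$.
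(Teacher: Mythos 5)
Your overall route is the same as the paper's: the expansion of $\widehat\bU-\bU$ from (\ref{eqa.2add}) (equivalently $\bU-\widehat\bU=-\bB\bH^+\bE'\bP_{\widehat\bF}+\bU\bP_{\widehat\bF}+\bB\bH^+\bE'$), the identity $\bF=\widehat\bF\bH^{+\prime}-\bE\bH^{+\prime}$, the weighted-inverse bounds of Proposition~\ref{la.2}(ii), the entrywise bounds of Lemma~\ref{la.1new}, and the $\ell_1$--$\ell_\infty$ H\"older step $\|\btheta'\bA\|_\infty\le\|\btheta\|_1\|\bA\|_\infty$. Parts (i), (ii), (iii) and the identity in (v) match the paper's argument; your four-term expansion of $\bP_{\widehat\bF}$ in (ii) is a slightly more laborious but equivalent version of the paper's three-term bound obtained from $\bP_{\widehat\bF}\bF=\bF+\bE\bH^{+\prime}-\bP_{\widehat\bF}\bE\bH^{+\prime}$.

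There is, however, one concrete gap in (iv). You propose to get $\|\frac{1}{T}\btheta'\bU\bE\|$ by the sparsity trick applied to Lemma~\ref{la.1new}(i), i.e.\ $\|\frac{1}{T}\btheta'\bU\bE\|\le\|\btheta\|_1\|\frac{1}{T}\bU\bE\|_\infty=|J|_0\,O_P\bigl(\sqrt{\tfrac{\log N}{TN}}+\tfrac{1}{N}\bigr)$. This is larger by a factor $\sqrt{\log N}$ than the stated rate $|J|_0\,O_P(\tfrac{1}{\sqrt{NT}}+\tfrac{1}{N})$: the $\sqrt{\log N}$ in Lemma~\ref{la.1new}(i) comes from a maximum over $N$ coordinates, which is unnecessary here because $\btheta$ is a \emph{fixed} sparse vector and $\bW$ has only $R$ columns. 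The paper instead splits $\frac{1}{T}\btheta'\bU\bE=\frac{1}{NT}\btheta'(\bU\bU'-\E\bU\bU')\bW+\frac{1}{NT}\btheta'\E\bU\bU'\bW$ and computes the variance of the first piece directly from the weak cross-sectional dependence condition (Assumption~\ref{ass2.3}(i)), getting $|J|_0\,O_P(1/\sqrt{NT})$ with no log factor; the second piece is $O(|J|_0/N)$. The extra $\sqrt{\log N}$ would propagate into Lemma~\ref{lc.2} and degrade the rates there, so you should replace the H\"older step by this direct second-moment computation. A second, minor point: in (v) the $\nu_{\min}^{-1}|J|_0/N$ term actually arises from $\bH'(\widehat\bF'\widehat\bF)^{-1}\bE'\bU'\btheta$ (via the bound on $\|\frac{1}{T}\btheta'\bU\bE\|$ just discussed), not from the $\bE'\bE\bH^{+\prime}\balpha_g$ term, which contributes only the $\nu_{\min}^{-2}/N$ piece; this is an attribution slip rather than a substantive error.
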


 \begin{proof}

        (i) By  Lemma \ref{la.1new}
        $
        \|\frac{1}{T}\btheta'(\widehat\bU-\bU) \widehat \bU'\|_\infty\leq \|\btheta\|_1 \|\frac{1}{T} (\widehat\bU-\bU) \widehat \bU'\|_\infty\leq O_P(\frac{\log N}{T}+\frac{1}{N\nu_{\min}^2})|J|_0.
        $
\\
        (ii) Note $\bH'\bH^{+'}=\bI$,  Lemma \ref{la.1new} shows  $\|\frac{1}{T}            \bE' \bP_{\widehat\bF}               \bE       \|= O_P(\frac{1}{N}   )$, $\|\frac{1}{T}        \bE'   \bP_{\widehat\bF}      \bU'\|_\infty  =O_P(\sqrt{\frac{\log N}{TN}}+\frac{1}{N}) $ ,
        \begin{eqnarray*}
        \|\frac{1}{T}\bE'   \bP_{\widehat\bF}  \bF\|&\leq &
          \|\frac{1}{T}\bE'   \bP_{\widehat\bF}  \bE\bH^{+'}\|
          +     \|\frac{1}{T}\bE'    \bE\bH^{+'}\|+ \|\frac{1}{T}\bE'    \bF \|= O_P(\frac{1}{N\nu_{\min}}   +\frac{1}{\sqrt{NT}}) \cr
           \|\frac{1}{T}    \bU\bP_{\widehat \bF} \bF\|_\infty&\leq&
        \|\frac{1}{T}    \bU\bP_{\widehat \bF} \bE\bH^{+'}\|_\infty
        +        \|\frac{1}{T}    \bU  \bE\bH^{+'}\|_\infty
               +    \|\frac{1}{T}    \bU  \bF \|_\infty\cr
               &\leq& O_P(\sqrt{\frac{\log N}{T}}+\frac{1}{N\nu_{\min}}) .
 \end{eqnarray*}

(iii) By Lemma \ref{la.1new} $ \|\frac{1}{T}      \bE'        \bU'\|_\infty=O_P(\sqrt{\frac{\log N}{TN}}+\frac{1}{N}) $ 
 and (ii)
\begin{eqnarray*}
        \|\frac{1}{T}     \widehat \bU\bE\|_\infty
        &\leq&   \|\frac{1}{T}     \bU\bE\|_\infty+ \|\frac{1}{T}    ( \widehat \bU-\bU)\bE\|_\infty\cr
        &\leq& \|\frac{1}{T}     \bU\bE\|_\infty+
          \|\frac{1}{T}      \bB\bH^+ \bE'   \bP_{\widehat\bF}     \bE\|_\infty +      \|\frac{1}{T}      \bU\bP_{\widehat\bF}   \bE\|_\infty  +          \|\frac{1}{T}    \bB\bH^+\bE'     \bE\|_\infty\cr
                   &\leq& O_P(\sqrt{\frac{\log N}{TN}}+\frac{1}{N\nu_{\min}})\cr
                     \|\frac{1}{T}     \widehat \bU\bF\|_\infty&\leq&
                        \|\frac{1}{T}       \bU\bF\|_\infty
                        +   \|\frac{1}{T}   (  \widehat \bU-\bU)\bF\|_\infty\cr
                        &\leq &    \|\frac{1}{T}       \bU\bF\|_\infty
                        + \|\frac{1}{T}      \bB\bH^+ \bE'   \bP_{\widehat\bF}  \bF\|_\infty +            \|\frac{1}{T}    \bU\bP_{\widehat \bF} \bF\|_\infty
                  + \|\frac{1}{T}    \bB\bH^+\bE'  \bF\|_\infty\cr
                  &\leq& O_P(\sqrt{\frac{\log N}{T}}+\frac{1}{N\nu^2_{\min}}  ).
\end{eqnarray*}

        (iv)
        $\frac{1}{T}\btheta'\bU\bE= \frac{1}{NT}\btheta' (\bU\bU'-\E\bU\bU')\bW
 + \frac{1}{NT}\btheta' \E\bU\bU'\bW
 $. So
 \begin{eqnarray*}
&&\E\| \frac{1}{NT}\btheta' (\bU\bU'-\E\bU\bU')\bW \|^2
=\sum_{k=1}^R\frac{1}{N^2T^2}\Var(\sum_{t=1}^T\btheta'\bu_t\bu_t'\bw_k  )
\cr
&\leq &\frac{C}{N^2T^2}\|\btheta\|_1^2 \max_{j,i\leq N}\sum_{ q,v\leq N}\sum_{t,s\leq T}|\Cov(u_{it}u_{qt},u_{js}u_{vs} )|\leq \frac{C|J|_0^2}{NT}.
  \end{eqnarray*}
  Also,
  $\| \frac{1}{NT}\btheta' \E\bU\bU'\bW\|
  \leq \max_{j\leq N}\sum_k|w_{k,j}| \|\btheta\|_1\| \frac{1}{TN} \E\bU\bU'\|_1
  \leq O(\frac{|J|_0}{N}).
  $ Also,
  \begin{eqnarray*}
&&\E\|\frac{1}{T}\btheta'\bU\bF \|^2
= \frac{1}{T^2} \tr\E \bF'\E(\bU'\btheta\btheta'\bU|\bF)\bF\leq \frac{C}{T}
 \|\E(\bU'\btheta\btheta'\bU|\bF)\|_1\cr
 &\leq&\frac{C}{T}\max_t\sum_{s=1}^T|\E( \btheta'\bu_t\bu_s'\btheta|\bF)|
 \leq \frac{C}{T}\max_t\sum_{s=1}^T\|\E(\bu_t\bu_s'|\bF)\|_1 \|\btheta\|_1\|\btheta\|_\infty\leq\frac{C|J|_0}{T}.
  \end{eqnarray*}

         (v) Since $ \widehat\balpha_g=(\widehat\bF'\widehat\bF)^{-1}\widehat\bF'\bG$,  simple calculations using Proposition  \ref{la.2} yield
\begin{eqnarray*}
 \widehat\balpha_g-\bH^{+'}\balpha_g &=&(\widehat\bF'\widehat\bF)^{-1}\widehat\bF'\bG-\bH^{+'}\balpha_g \cr
&=&(\widehat\bF'\widehat\bF)^{-1}\bE'\bvarepsilon_{g}-(\widehat\bF'\widehat\bF)^{-1}\bE'\bE\bH^{+'}\balpha_g+ (\widehat\bF'\widehat\bF)^{-1}\bE'\bU'\btheta
+O_P(\sqrt{\frac{|J|_0}{T}}) \cr
&=&  |J|_0O_P(1+\sqrt{\frac{N}{T}})+O_P(\nu_{\min}^{-1})\cr
\bH' ( \widehat\balpha_g-\bH^{+'}\balpha_g )&=&
\bH' (\widehat\bF'\widehat\bF)^{-1}\bE'\bvarepsilon_{g}-\bH' (\widehat\bF'\widehat\bF)^{-1}\bE'\bE\bH^{+'}\balpha_g+ \bH' (\widehat\bF'\widehat\bF)^{-1}\bE'\bU'\btheta
+O_P(\sqrt{\frac{|J|_0}{T}})\cr
&=&  O_P(\nu_{\min}^{-1}\frac{|J|_0}{N} +\sqrt{\frac{|J|_0}{T}}+ \nu_{\min} ^{-2}\frac{1}{N}).
\end{eqnarray*}

 \end{proof}

\begin{lem}\label{lc.2} Suppose $|J|_0=o(N\nu_{\min}^2)$. For any $R\geq r$
\\
(i) $\frac{1}{T}\| \bP_{\widehat\bF} \bU'\btheta\|^2=O_P(\frac{|J|_0^2}{N}+\frac{|J|_0^2}{T} +\frac{|J|_0^{3/2}}{\nu_{\min}N\sqrt{T}}) $,
$\frac{1}{T}\|\bP_{\widehat\bF} \bvarepsilon_g\|^2=O_P(\frac{1}{T})$,\\
(ii)   $\| \frac{1}{T}(\widehat\bU-\bU)\bvarepsilon_g\|_\infty=O_P(\frac{\nu^{-1}_{\min}}{\sqrt{NT}}+\frac{\sqrt{\log N}}{T} ),
$
and $\| \frac{1}{T} \widehat\bU \bvarepsilon_g\|_\infty=O_P( \sqrt{\frac{\log N}{T}})
=\|\frac{1}{T}     \widehat  \bU\bvarepsilon_y\|_\infty$
\\
(iii)  $\lambda_{\min}(\frac{1}{T}\widehat\bU_{\widehat J}\widehat\bU_{\widehat J}')>c_0$ with probability approaching one. $\frac{1}{T}\|\bP_{\widehat\bU_{\widehat J}}   \bvarepsilon_g\|^2=O_P(\frac{|J|_0\log N}{T})=\frac{1}{T}\|\bP_{\widehat\bU_{\widehat J}}   \bvarepsilon_y\|^2$. \\
 (iv)  $\frac{1}{T}\|  ( \widehat\bU- \bU)'\btheta\|^2=O_P(\frac{|J|_0^2+\nu^{-2}_{\min}}{N}+\frac{|J|_0^2}{T} +\frac{\nu_{\min}^{-1}|J|_0^{3/2}}{N\sqrt{T}})$,  $\frac{1}{T}   \bE'   \bP_{\widehat\bF}   \bvarepsilon_y=O_P(\frac{1}{\sqrt{NT}})$
, \\ $\frac{1}{T}  \btheta'  \bU\bP_{\widehat \bF}   \bvarepsilon_y=   O_P(\frac{|J|_0}{T}+\frac{|J|_0}{\sqrt{NT}} + \frac{ \nu_{\min}^{-1/2}|J|_0^{3/4}}{\sqrt{N}T^{3/4}})$.
\end{lem}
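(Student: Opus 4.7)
The unifying idea is the decomposition $\widehat\bF=\bF\bH'+\bE$ (with $\bE=\tfrac{1}{N}\bU'\bW$) together with the residual expansion (\ref{eqa.2add}). Every projection or product against $\widehat\bF$ will be split into an $\bF$-part, handled by the well-behaved sandwich $\bH'(\widehat\bF'\widehat\bF)^{-1}\bH=O_P(1/T)$ from Proposition \ref{la.2}(ii)-(iii), and an $\bE$-part, handled by $\|(\widehat\bF'\widehat\bF)^{-1}\|=O_P(N/T)$ from Proposition \ref{la.2}(i). The sparsity factor $|J|_0$ enters through the bounds of Lemma \ref{lc.0}(iv) on $\btheta'\bU\bF$ and $\btheta'\bU\bE$, and $\sqrt{\log N}$ factors enter through the $\|\cdot\|_\infty$ bounds of Lemma \ref{la.1new} and Assumption \ref{a4.1}(iv).

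For (i), I would write $\tfrac{1}{T}\|\bP_{\widehat\bF}\bU'\btheta\|^2=\tfrac{1}{T}\btheta'\bU\widehat\bF(\widehat\bF'\widehat\bF)^{-1}\widehat\bF'\bU'\btheta$ and expand $\widehat\bF=\bF\bH'+\bE$ to obtain three pieces. The $\bF\bH'$-part is $\tfrac{1}{T}\btheta'\bU\bF\bH'(\widehat\bF'\widehat\bF)^{-1}\bH\bF'\bU'\btheta\le T\|\tfrac{1}{T}\btheta'\bU\bF\|^2\,\|\bH'(\widehat\bF'\widehat\bF)^{-1}\bH\|=O_P(|J|_0/T)$. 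The pure $\bE$-part is bounded by $T\|\tfrac{1}{T}\btheta'\bU\bE\|^2\,\|(\widehat\bF'\widehat\bF)^{-1}\|=O_P(|J|_0^2/N)$ after using Lemma \ref{lc.0}(iv). The cross term is controlled by Cauchy-Schwarz and gives the $|J|_0^{3/2}/(\nu_{\min}N\sqrt{T})$ rate via $\|\bH'(\widehat\bF'\widehat\bF)^{-1}\|=O_P((\nu_{\min}^{-1}+\sqrt{N/T})/T)$. The bound on $\tfrac{1}{T}\|\bP_{\widehat\bF}\bvarepsilon_g\|^2$ is analogous, replacing $\btheta'\bU$ by $\bvarepsilon_g'$ and using $\|\tfrac{1}{T}\bF'\bvarepsilon_g\|=O_P(T^{-1/2})$ and $\|\tfrac{1}{T}\bE'\bvarepsilon_g\|=O_P((NT)^{-1/2})$ (both follow from Assumption \ref{a4.1}(iv) applied to $\bv_t=\bvarepsilon_{g,t}$).

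For (ii), I plug $\widehat\bU-\bU$ from (\ref{eqa.2add}) into $\tfrac{1}{T}(\widehat\bU-\bU)\bvarepsilon_g$. Each of the four pieces is an operator in which one $\bE'$ or $\bU$ meets $\bvarepsilon_g$; I bound them using $\|\tfrac{1}{T}\bE'\bvarepsilon_g\|=O_P((NT)^{-1/2})$, $\|\tfrac{1}{T}\bU\bvarepsilon_g\|_\infty=O_P(\sqrt{\log N/T})$ from Assumption \ref{a4.1}(iv), together with Proposition \ref{la.2}. Then $\|\tfrac{1}{T}\widehat\bU\bvarepsilon_g\|_\infty\le \|\tfrac{1}{T}\bU\bvarepsilon_g\|_\infty+\|\tfrac{1}{T}(\widehat\bU-\bU)\bvarepsilon_g\|_\infty$ gives the $\sqrt{\log N/T}$ rate; the same argument applies to $\bvarepsilon_y$.

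For (iii), the sparse eigenvalue condition yields $\lambda_{\min}(\tfrac{1}{T}\bU_{\widehat J}\bU_{\widehat J}')>c_1$ on the event $|\widehat J|_0\le l_T|J|_0$ (guaranteed by Proposition \ref{proc.1}(ii)). Weyl's inequality together with $\|\tfrac{1}{T}\widehat\bU\widehat\bU'-\tfrac{1}{T}\bU\bU'\|_\infty=O_P(\nu_{\min}^{-2}/N+\log N/T)=o_P(1)$ from Lemma \ref{la.1new}(iv), applied to the $|\widehat J|_0\times |\widehat J|_0$ submatrix, transfers the lower bound to $\tfrac{1}{T}\widehat\bU_{\widehat J}\widehat\bU_{\widehat J}'$. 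The projection bound then follows from $\|\bP_{\widehat\bU_{\widehat J}}\bvarepsilon_g\|^2\le \lambda_{\min}^{-1}(\widehat\bU_{\widehat J}\widehat\bU_{\widehat J}')\,\|\widehat\bU_{\widehat J}\bvarepsilon_g\|^2$, and $\|\widehat\bU_{\widehat J}\bvarepsilon_g\|^2\le |\widehat J|_0 T^2\|\tfrac{1}{T}\widehat\bU\bvarepsilon_g\|_\infty^2=O_P(|J|_0 T\log N)$ by part (ii); the same for $\bvarepsilon_y$.

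Part (iv) mirrors part (i). For $\tfrac{1}{T}\|(\widehat\bU-\bU)'\btheta\|^2$, I substitute (\ref{eqa.2add}), bound each piece via Propositions \ref{la.2} and Lemma \ref{lc.0}, and the $\nu_{\min}^{-2}/N$ term arises from $\|\bB\bH^+\bE'\|^2$. The identity $\tfrac{1}{T}\bE'\bP_{\widehat\bF}\bvarepsilon_y=\tfrac{1}{T}\bE'\widehat\bF(\widehat\bF'\widehat\bF)^{-1}\widehat\bF'\bvarepsilon_y$ is split as before; the sandwiched piece contributes $O_P((NT)^{-1/2})$. Finally, for $\tfrac{1}{T}\btheta'\bU\bP_{\widehat\bF}\bvarepsilon_y$ I expand on both sides; here the main technical step is the interaction term $T^{-1}\btheta'\bU\bE(\widehat\bF'\widehat\bF)^{-1}\bE'\bvarepsilon_y$, bounded via Cauchy-Schwarz as $\|\tfrac{1}{T}\btheta'\bU\bE\|\,\|(\widehat\bF'\widehat\bF)^{-1}\|\,\|\bE'\bvarepsilon_y\|$. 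The main obstacle throughout will be the bookkeeping in (i) and (iv): projection-type quantities do not separate cleanly into factor and error parts, and one must be careful to always pair $(\widehat\bF'\widehat\bF)^{-1}$ with $\bH$ on both sides whenever possible so that the weighted rate from Proposition \ref{la.2}(ii) is activated instead of the raw $O_P(N/T)$ rate.
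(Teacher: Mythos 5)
Your plan follows essentially the same route as the paper's proof: expand $\bP_{\widehat\bF}$ via $\widehat\bF=\bF\bH'+\bE$ into the $\bE\bE$, cross, and $\bF\bF$ pieces and pair $(\widehat\bF'\widehat\bF)^{-1}$ with $\bH$ wherever possible (Proposition \ref{la.2}), use the expansion (\ref{eqa.2add}) for $\widehat\bU-\bU$, and obtain (iii) from the sparse eigenvalue condition plus the entrywise perturbation bound times $|\widehat J|_0$. One small correction: the rate $\|\tfrac{1}{T}\bE'\bvarepsilon_g\|=O_P((NT)^{-1/2})$ does not follow from the uniform bound in Assumption \ref{a4.1}(iv) (which only yields $O_P(\sqrt{\log N/T})$, too weak here); it requires the cross-sectional variance computation of Lemma \ref{la.1}(ii) applied with $\bvarepsilon_g$ in place of $\bG$, using the weak-dependence conditions.
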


 \begin{proof}

 (i) By Lemma \ref{lc.0} (vi) and Proposition \ref{la.2},
 \begin{eqnarray*}
 \frac{1}{T}\| \bP_{\widehat\bF} \bU'\btheta\|^2
& =&\frac{1}{T}\btheta'\bU  \bE(\widehat\bF'\widehat\bF)^{-1}\bE'\bU'\btheta
+\frac{2}{T}\btheta'\bU  \bE(\widehat\bF'\widehat\bF)^{-1}\bH\bF'\bU'\btheta
\cr
&&
+\frac{1}{T}\btheta'\bU  \bF\bH'(\widehat\bF'\widehat\bF)^{-1}\bH\bF'\bU'\btheta\cr
&\leq & O_P(\frac{|J|_0^2}{N}+\frac{|J|_0^2}{T} +\frac{|J|_0^{3/2}}{\nu_{\min}N\sqrt{T}}) ,\cr
\frac{1}{T}\|\bP_{\widehat\bF} \bvarepsilon_g\|^2&=&
\frac{1}{T}\bvarepsilon_g' \bE(\widehat\bF'\widehat\bF)^{-1}\bE' \bvarepsilon_g
+\frac{2}{T}\bvarepsilon_g' \bE(\widehat\bF'\widehat\bF)^{-1}\bH\bF' \bvarepsilon_g
+\frac{1}{T}\bvarepsilon_g'  \bF\bH(\widehat\bF'\widehat\bF)^{-1}\bH\bF' \bvarepsilon_g\cr
&\leq& O_P(\frac{N}{NT})
+O_P(\frac{1}{\sqrt{NT}})\frac{\nu^{-1}_{\min}}{\sqrt{T}}+O_P(\frac{1}{T})=O_P(\frac{1}{T}).
 \end{eqnarray*}

 (ii)
By (\ref{eqa.2add})
\begin{eqnarray*}
 \frac{1}{T}  (\bU-\widehat\bU) \bvarepsilon_g
&=&- \frac{1}{T} \bB\bH^+ \bE' \bE(\widehat\bF'\widehat\bF)^{-1}\bE'\bvarepsilon_g
- \frac{1}{T} \bB\bH^+ \bE'  \bF\bH'(\widehat\bF'\widehat\bF)^{-1}\bE'\bvarepsilon_g+  \frac{1}{T} \bU\bE(\widehat\bF'\widehat\bF)^{-1}\bE'\bvarepsilon_g
\cr
&&- \frac{1}{T} \bB\bH^+ \bE' \bE(\widehat\bF'\widehat\bF)^{-1}\bH\bF'\bvarepsilon_g
- \frac{1}{T} \bB\bH^+ \bE'  \bF\bH'(\widehat\bF'\widehat\bF)^{-1}\bH\bF'\bvarepsilon_g
+\frac{1}{T} \bU\bE(\widehat\bF'\widehat\bF)^{-1}\bH\bF'\bvarepsilon_g \cr
&&
+ \frac{1}{T} \bU \bF\bH'(\widehat\bF'\widehat\bF)^{-1}\bE'\bvarepsilon_g
+\frac{1}{T} \bU \bF\bH'(\widehat\bF'\widehat\bF)^{-1}\bH\bF'\bvarepsilon_g
+ \frac{1}{T} \bB\bH^+\bE'\bvarepsilon_g.
 \end{eqnarray*}
 So  by Lemmas \ref{la.1}  and $ \| \frac{1}{T} \bU\bE\|_\infty=O_P(\sqrt{\frac{\log N}{TN}}+\frac{1}{N})$,
$
\| \frac{1}{T}(\widehat\bU-\bU)\bvarepsilon_g\|_\infty=O_P(\frac{\nu^{-1}_{\min}}{\sqrt{NT}}+\frac{\sqrt{\log N}}{T} ).
 $

Also,  with  $\| \frac{1}{T}  \bU \bvarepsilon_g\|_\infty=O_P( \sqrt{\frac{\log N}{T}})
 $  we have  $\| \frac{1}{T} \widehat \bU \bvarepsilon_g\|_\infty=O_P( \sqrt{\frac{\log N}{T}}) .
 $    The proof for  $\|\frac{1}{T}     \widehat  \bU\bvarepsilon_y\|_\infty$ is the same.

 (iii)   First, it follows from Lemma \ref{lc.0} that
 $
  \|\frac{1}{T}\widehat\bU\widehat\bU'- \frac{1}{T} \bU \bU'\|_\infty \leq  O_P(\frac{\log N}{T}+\frac{\nu^{-2}_{\min}}{N}).
$

   Also     by Proposition \ref{proc.1},  $|\widehat J|_0=O_P(|J|_0)$.  Then  with probability approaching one,
\begin{eqnarray*}
 \lambda_{\min}(\frac{1}{T}\widehat\bU_{\widehat J}\widehat\bU_{\widehat J}')&\geq&
 \lambda_{\min}(\frac{1}{T} \bU_{\widehat J} \bU_{\widehat J}')
 - \|\frac{1}{T}\widehat\bU\widehat\bU'- \frac{1}{T} \bU \bU'\|_\infty|\widehat J|_0\cr
 &\geq& \phi_{\min} -O_P(\frac{\log N}{T}+\frac{\nu^{-2}_{\min}}{N})|J|_0\geq c\cr
        \frac{1}{T}\| \bP_{\widehat\bU_{\widehat J}}   \bvarepsilon_g
\|^2&=& \frac{1}{T}\bvarepsilon_g' \widehat\bU_{\widehat J}'(\widehat\bU_{\widehat J}\widehat\bU_{\widehat J}')^{-1}\widehat\bU_{\widehat J}\bvarepsilon_g
\leq \|\frac{1}{T}\bvarepsilon_g' \widehat\bU_{\widehat J}'\|^2\lambda^{-1}_{\min}(\frac{1}{T}\widehat\bU_{\widehat J}\widehat\bU_{\widehat J}')\cr
&\leq&c\|\frac{1}{T}\bvarepsilon_g' \widehat\bU '\|^2_\infty|\widehat J|_0
\leq O_P(\frac{|J|_0\log N}{T}).
        \end{eqnarray*}
 $\frac{1}{T}\|\bP_{\widehat\bU_{\widehat J}}   \bvarepsilon_y\|^2$ follows from the same proof.

 (iv) Recall that $\|\balpha_g'\|=\|\btheta'\bB\|<C$.  By part (i) and Lemma \ref{lc.0},
\begin{eqnarray*}
        \frac{1}{T}\| \btheta' ( \widehat\bU- \bU)\|^2
      & \leq&        \frac{1}{T}\| \btheta'       \bB\bH^+ \bE'   \bP_{\widehat\bF}   \|^2
       +   \frac{1}{T}\| \btheta'  \bU\bP_{\widehat \bF}  \|^2
         +   \frac{1}{T}\| \btheta' \bB\bH^+\bE'   \|^2
       \cr
       &  \leq&  O_P(\frac{|J|_0^2+\nu^{-2}_{\min}}{N}+\frac{|J|_0^2}{T} +\frac{\nu_{\min}^{-1}|J|_0^{3/2}}{N\sqrt{T}}).
 \cr
    \|  \frac{1}{T}   \bE'   \bP_{\widehat\bF}   \bvarepsilon_y\|
      &\leq &     \|  \frac{1}{T}   \bE'   \bP_{\widehat\bF} \| \|  \bP_{\widehat\bF}  \bvarepsilon_y\|=O_P(\frac{1}{\sqrt{NT}})\cr
      \frac{1}{T}  \btheta'  \bU\bP_{\widehat \bF}   \bvarepsilon_y
      &\leq&
    \frac{1}{T} \| \btheta'  \bU\bP_{\widehat \bF} \|\bP_{\widehat \bF}  \bvarepsilon_y\|
    =O_P(\frac{|J|_0}{T}+\frac{|J|_0}{\sqrt{NT}} + \frac{ \nu_{\min}^{-1/2}|J|_0^{3/4}}{\sqrt{N}T^{3/4}}).
          \end{eqnarray*}

 \end{proof}

\begin{lem}\label{lc.3} For any $R\geq r$\\
(i)  $\frac{1}{T}\| \bM_{\widehat\bU_{\widehat J}} \widehat  \bU'\btheta\|^2=O_P(|J|_0\frac{\log N}{T})$,
$\frac{1}{T}\| \bM_{\widehat\bU_{\widehat J}}   \bU'\btheta\|^2= O_P(\frac{|J|_0\log N}{T}+ \frac{|J|_0^2+\nu^{-2}_{\min}}{N}+\frac{|J|_0^2}{T} ).$\\
(ii)
 $\frac{1}{T}  \bvarepsilon_y'  \bP_{\widehat\bU_{\widehat J}}   (\widehat \bU-\bU)'\btheta= |J|_0^2\sqrt{\frac{\log N}{T}} O_P(\frac{\log N}{T}+\frac{1}{N\nu^2_{\min}}) $,\\
 $\frac{1}{T}  \bvarepsilon_y'  \bM_{\widehat\bU_{\widehat J}}   \bU'\btheta\leq  O_P(\frac{|J|_0\log N}{T}+\frac{|J|_0+\nu^{-1}_{\min}}{\sqrt{NT}}  + \frac{ \nu_{\min}^{-1/2}|J|_0^{3/4}}{\sqrt{N}T^{3/4}} + \sqrt{\frac{\log N}{T}} \frac{|J|_0^2}{N\nu^2_{\min}})$,\\
 (iii)
$\|\bP_{\widehat\bU_{\widehat J}}   \bE\| =O_P(\sqrt{\frac{|J|_0\log N}{N}}+ \frac{\sqrt{T|J|_0}}{N\nu_{\min}})$,
 $\frac{1}{T}  \bvarepsilon_y'\bP_{\widehat\bU_{\widehat J}}  \bE=O_P(\frac{|J|_0\log N}{T\sqrt{N}}+\frac{|J|_0\sqrt{\log N}}{N\nu_{\min}\sqrt{T}}).$
 \end{lem}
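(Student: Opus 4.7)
The plan is to combine the oracle inequalities from Proposition~\ref{proc.1}, the structural expansion of $\widehat\bU - \bU$ in equation~(\ref{eqa.2add}), and the concentration bounds collected in Lemmas~\ref{la.1}, \ref{la.1new}, \ref{lc.0}, and \ref{lc.2}. The central identity driving most of the argument is the exact orthogonality $\bM_{\widehat\bU_{\widehat J}}\widehat\bU'\widetilde\btheta = 0$, which holds because $\widetilde\btheta$ is supported on $\widehat J$ by construction of the lasso; Proposition~\ref{proc.1}(ii) additionally provides $|\widehat J|_0 = O_P(|J|_0)$, so that projector dimensions are controlled. All other pieces are then controlled via $\ell_1$--$\ell_\infty$ duality or Cauchy--Schwarz.

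For Part~(i), I would use the orthogonality above to rewrite $\bM_{\widehat\bU_{\widehat J}}\widehat\bU'\btheta = \bM_{\widehat\bU_{\widehat J}}\widehat\bU'(\btheta - \widetilde\btheta)$, and then invoke $\|\bM_{\widehat\bU_{\widehat J}}\| \leq 1$ together with the oracle rate $T^{-1}\|\widehat\bU'(\btheta - \widetilde\btheta)\|^2 = O_P(|J|_0\log N/T)$ from Proposition~\ref{proc.1}(i). The second statement follows by adding the approximation error $\bM_{\widehat\bU_{\widehat J}}(\bU - \widehat\bU)'\btheta$, bounded using the contractivity of $\bM_{\widehat\bU_{\widehat J}}$ and Lemma~\ref{lc.2}(iv).

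For Part~(ii), the first statement is handled as a quadratic form: writing $\bP_{\widehat\bU_{\widehat J}} = \widehat\bU_{\widehat J}'(\widehat\bU_{\widehat J}\widehat\bU_{\widehat J}')^{-1}\widehat\bU_{\widehat J}$ and setting $a = T^{-1}\widehat\bU_{\widehat J}\bvarepsilon_y$, $D = T^{-1}\widehat\bU_{\widehat J}\widehat\bU_{\widehat J}'$, and $b = T^{-1}\widehat\bU_{\widehat J}(\widehat\bU - \bU)'\btheta$, the target quantity equals $a'D^{-1}b$. Cauchy--Schwarz with $\|D^{-1}\| = O_P(1)$ from Lemma~\ref{lc.2}(iii), the dimension-converted $\ell_\infty$ bound $\|a\| \leq \sqrt{|\widehat J|_0}\|T^{-1}\widehat\bU\bvarepsilon_y\|_\infty$ from Lemma~\ref{lc.2}(ii), and $\|b\| \leq \sqrt{|\widehat J|_0}\|T^{-1}\btheta'(\widehat\bU - \bU)\widehat\bU'\|_\infty$ from Lemma~\ref{lc.0}(i) then deliver the stated rate. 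For the second statement I would decompose $\bM_{\widehat\bU_{\widehat J}}\bU'\btheta = \bM_{\widehat\bU_{\widehat J}}\widehat\bU'(\btheta - \widetilde\btheta) + \bM_{\widehat\bU_{\widehat J}}(\bU - \widehat\bU)'\btheta$, using the orthogonality again on the first piece; the first piece is then controlled by $\|\btheta - \widetilde\btheta\|_1 \cdot \|T^{-1}\widehat\bU\bM_{\widehat\bU_{\widehat J}}\bvarepsilon_y\|_\infty$ via $\ell_1$--$\ell_\infty$ duality, combining Proposition~\ref{proc.1}(i) with the already-controlled pieces $\|T^{-1}\widehat\bU\bvarepsilon_y\|_\infty$ and $\|T^{-1}\widehat\bU\bP_{\widehat\bU_{\widehat J}}\bvarepsilon_y\|_\infty$, while the second piece is handled by Cauchy--Schwarz using $T^{-1}\|\bM_{\widehat\bU_{\widehat J}}\bvarepsilon_y\|^2 \leq T^{-1}\|\bvarepsilon_y\|^2 = O_P(1)$ and the rate on $T^{-1}\|(\widehat\bU - \bU)'\btheta\|^2$ from Lemma~\ref{lc.2}(iv).

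For Part~(iii), the key estimate is $\|\bP_{\widehat\bU_{\widehat J}}\bE\|^2 \leq \|(\widehat\bU_{\widehat J}\widehat\bU_{\widehat J}')^{-1}\|_{op}\,\|\widehat\bU_{\widehat J}\bE\|_F^2$; the first factor is $O_P(1/T)$ by Lemma~\ref{lc.2}(iii), while the second is bounded by $|\widehat J|_0 R\,T^2 \|T^{-1}\widehat\bU\bE\|_\infty^2$ through Lemma~\ref{lc.0}(iii), yielding the claimed rate after taking $|\widehat J|_0 = O_P(|J|_0)$. The scalar bilinear form $T^{-1}\bvarepsilon_y'\bP_{\widehat\bU_{\widehat J}}\bE$ reduces to another $a'D^{-1}B$ quadratic form with $B = T^{-1}\widehat\bU_{\widehat J}\bE$, so Cauchy--Schwarz on the same three factors finishes the argument. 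The main technical obstacle throughout is keeping track of three competing rate contributions---the $1/\sqrt{N}$ savings carried by $\bE = N^{-1}\bU'\bW$, the $\nu_{\min}^{-1}$ factors generated by $\bH^+$ in the expansion~(\ref{eqa.2add}) of $\widehat\bU-\bU$, and the sparsity-dependent factors $|J|_0$ and $\|\btheta-\widetilde\btheta\|_1$---and verifying that each accumulation stays within the regimes permitted by the rate conditions in Assumption~\ref{ass3.5}(iii).
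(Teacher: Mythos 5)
Your treatment of parts (i), (iii), and the first claim of (ii) essentially reproduces the paper's argument: the paper phrases the key step as $\bM_{\widehat\bU_{\widehat J}}\widehat\bU'\btheta=\widehat\bU'(\btheta-\widehat\bm)$ with $\widehat\bm$ the least-squares coefficient supported on $\widehat J$, and bounds the residual by the competitor $\widetilde\btheta$; your ``exact orthogonality $\bM_{\widehat\bU_{\widehat J}}\widehat\bU'\widetilde\btheta=0$ plus contractivity'' yields the same inequality. The quadratic-form factorizations you use in (ii)-first and in (iii) likewise match the paper's computations and deliver the stated rates.

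The genuine gap is in the second claim of (ii), specifically your handling of the piece $\frac{1}{T}\bvarepsilon_y'\bM_{\widehat\bU_{\widehat J}}(\bU-\widehat\bU)'\btheta$ by Cauchy--Schwarz against $\frac{1}{T}\|\bvarepsilon_y\|^2=O_P(1)$. That gives $O_P\bigl(\frac{|J|_0+\nu_{\min}^{-1}}{\sqrt N}+\frac{|J|_0}{\sqrt T}\bigr)$, which exceeds the claimed bound by a factor of order $\sqrt T$ in its leading terms and is useless downstream: this quantity must be $o_P(T^{-1/2})$ for the asymptotic normality of $\widehat\bbeta$, and $\sqrt T\cdot |J|_0/\sqrt T=|J|_0$ does not vanish. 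The point you lose is that $\bvarepsilon_y$ is nearly orthogonal to the factor-estimation error, and blunt Cauchy--Schwarz discards exactly that. The paper instead splits off $\bP_{\widehat\bU_{\widehat J}}(\widehat\bU-\bU)'\btheta$ (your (ii)-first term) and expands the remaining $\frac{1}{T}\bvarepsilon_y'(\widehat\bU-\bU)'\btheta$ through the structural identity (\ref{eqa.2add}), invoking the bilinear estimates $\frac{1}{T}\bE'\bP_{\widehat\bF}\bvarepsilon_y=O_P((NT)^{-1/2})$ and $\frac{1}{T}\btheta'\bU\bP_{\widehat\bF}\bvarepsilon_y$ from Lemma \ref{lc.2}(iv); those carry the extra $T^{-1/2}$. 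A secondary, smaller issue: bounding your first piece by $\|\btheta-\widetilde\btheta\|_1\,\|\frac{1}{T}\widehat\bU\bM_{\widehat\bU_{\widehat J}}\bvarepsilon_y\|_\infty$ costs an extra $\sqrt{|J|_0}$, since $\|\frac{1}{T}\widehat\bU\bP_{\widehat\bU_{\widehat J}}\bvarepsilon_y\|_\infty$ is only $O_P(\sqrt{|J|_0\log N/T})$, giving $|J|_0^{3/2}\log N/T$ rather than the stated $|J|_0\log N/T$. The paper avoids this by working with $\widehat\bm$ directly, so no residual projector remains in front of $\bvarepsilon_y$, and by separately establishing $\|\btheta-\widehat\bm\|_1^2=O_P(|J|_0^2\log N/T)$ via the sparse-eigenvalue condition --- a step absent from your outline.
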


\begin{proof}
(i) 
First note that $\bP_{\widehat \bU_{\widehat J}}\widehat\bU'\btheta= \widehat\bU'\widehat\bm$, where
$$
\widehat\bm=(\widehat m_1,\cdots,\widehat m_N)'=\arg\min_{\bm}\|\widehat \bU'(\btheta-\bm)\|:\quad m_j=0,\text{ for } j\notin \widehat J.
$$
Thus by the definition of $\widehat\bm$, Proposition \ref{proc.1} and Lemma \ref{lc.2},
\begin{eqnarray*}
\frac{1}{T}\| \bM_{\widehat\bU_{\widehat J}}  \widehat \bU'\btheta\|^2
&=&\frac{1}{T}\|  \widehat \bU'\btheta- \widehat  \bU'\widehat\bm\|^2
\leq \frac{1}{T}\|  \widehat \bU'\btheta- \widehat  \bU'\widetilde\btheta\|^2\leq O_P(|J|_0\frac{\log N}{T})\cr
\frac{1}{T}\| \bM_{\widehat\bU_{\widehat J}}   \bU'\btheta\|^2
&\leq& O_P(\frac{|J|_0\log N}{T})
+\frac{1}{T}\|  ( \widehat\bU- \bU)'\btheta\|^2
=  O_P(\frac{|J|_0\log N+|J|_0^2}{T} +\frac{|J|_0^2+\nu^{-2}_{\min}}{N})
\end{eqnarray*}
where we used  $\frac{\nu_{\min}^{-1}|J|_0^{3/2}}{N\sqrt{T}}=O_P(\frac{|J|_0\log N}{T})$ by our assumption. 

(ii) Let  $\bDelta=\btheta-\widehat\bm$. Then $\dim(\bDelta)=O_P(|J|_0)$. Also, by Lemma \ref{lc.0},
$$
\bDelta'\frac{1}{T}(\widehat\bU\widehat\bU'-\bU\bU')\bDelta
\leq \|\bDelta\|_1^2\|\frac{1}{T}(\widehat\bU\widehat\bU'-\bU\bU')\|_\infty\leq O_P(\frac{\log N}{T}+\frac{1}{N\nu^2_{\min}})\|\bDelta\|^2|J|_0.
$$
Also, $\|\bDelta\|^2
\leq  \frac{C}{T}\|\bU'\bDelta\|^2$ due to the   spare eigenvalue condition on $\frac{1}{T}\bU\bU'$. Then $\widetilde\btheta_j=0$ for $j\notin\widehat J$ implies $\|\widehat \bU'\bDelta\|\leq \|\widehat \bU'(\btheta-\widetilde\btheta)\|$ and Proposition \ref{proc.1} implies  
\begin{eqnarray*}
\|\btheta-\widehat\bm\|_1^2
&\leq& |J|_0\|\bDelta\|^2
\leq |J|_0\frac{1}{T}\|\bU'\bDelta\|^2
\leq  |J|_0\frac{1}{T}\|\widehat \bU'\bDelta\|^2+O_P(\frac{\log N}{T}+\frac{1}{N\nu^2_{\min}})\|\bDelta\|^2|J|_0\cr
&\leq& |J|_0\frac{1}{T}\|  \widehat \bU'\btheta- \widehat  \bU'\widetilde\btheta\|^2+O_P(\frac{\log N}{T}+\frac{1}{N\nu^2_{\min}})\|\bDelta\|^2|J|_0\cr
&\leq& \frac{|J|_0^2\log N}{T}+O_P(\frac{|J|_0\log N}{T}+\frac{|J|_0}{N\nu^2_{\min}})\|\bDelta\|^2.
\end{eqnarray*}
The above implies
$\|\btheta-\widehat\bm\|_1^2\leq O_P(|J|_0^2\frac{\log N}{T})$.
Hence by Lemma \ref{lc.2},
\begin{eqnarray*}
  \frac{1}{T}  \bvarepsilon_y'  \bP_{\widehat\bU_{\widehat J}}   (\widehat \bU-\bU)'\btheta &\leq&
    \|\frac{1}{\sqrt{T}}  \bvarepsilon_y'  \bP_{\widehat\bU_{\widehat J}} \|  \|\widehat\bU  (\widehat \bU-\bU)'\btheta \|_\infty\frac{\sqrt{|J|_0}}{T} \lambda_{\min}^{-1/2}(\frac{1}{T}\widehat\bU_{\widehat J}\widehat\bU_{\widehat J}')\cr
    &\leq& |J|_0^2\sqrt{\frac{\log N}{T}} O_P(\frac{\log N}{T}+\frac{1}{N\nu^2_{\min}})  .\cr
\frac{1}{T}  \bvarepsilon_y'  \bM_{\widehat\bU_{\widehat J}}   \widehat \bU'\btheta
&=&\frac{1}{T}  \bvarepsilon_y'    \widehat  \bU'(\btheta-\widehat\bm)
\leq \|\frac{1}{T}  \bvarepsilon_y'    \widehat  \bU'\|_\infty\|\btheta-\widehat\bm\|_1
\leq O_P(\frac{|J|_0\log N}{T}).\cr
\frac{1}{T}  \bvarepsilon_y'  \bM_{\widehat\bU_{\widehat J}}     \bU'\btheta
&\leq& \frac{1}{T}  \bvarepsilon_y'  \bM_{\widehat\bU_{\widehat J}}   \widehat \bU'\btheta
+\frac{1}{T}  \bvarepsilon_y'     (\widehat \bU-\bU)'\btheta
-\frac{1}{T}  \bvarepsilon_y'  \bP_{\widehat\bU_{\widehat J}}   (\widehat \bU-\bU)'\btheta
\cr
&\leq&O_P(\frac{|J|_0\log N}{T})+
\frac{1}{T}  \btheta'        \bB\bH^+ \bE'   \bP_{\widehat\bF}   \bvarepsilon_y
+\frac{1}{T}  \btheta'  \bU\bP_{\widehat \bF}   \bvarepsilon_y
+\frac{1}{T}  \btheta'    \bB\bH^+\bE'    \bvarepsilon_y
\cr
&&-\frac{1}{T}  \bvarepsilon_y'  \bP_{\widehat\bU_{\widehat J}}   (\widehat \bU-\bU)'\btheta\cr
&\leq& O_P(\frac{|J|_0\log N}{T}+\frac{|J|_0+\nu^{-1}_{\min}}{\sqrt{NT}}  + \frac{ \nu_{\min}^{-1/2}|J|_0^{3/4}}{\sqrt{N}T^{3/4}} + \sqrt{\frac{\log N}{T}} \frac{|J|_0^2}{N\nu^2_{\min}}).
\end{eqnarray*}
(iii) By Lemma \ref{lc.0},
\begin{eqnarray*}
\|\bP_{\widehat\bU_{\widehat J}}   \bE\| &\leq&\| \widehat\bU_{\widehat J}'(  \frac{1}{T}\widehat\bU_{\widehat J}\widehat\bU_{\widehat J}')^{-1}\| \frac{1}{T} \|\widehat\bU \bE \|_\infty\sqrt{|J|_0}
\leq O_P(\sqrt{\frac{|J|_0\log N}{N}}+ \frac{\sqrt{T|J|_0}}{N\nu_{\min}})
\cr
\| \frac{1}{T}  \bvarepsilon_y'\bP_{\widehat\bU_{\widehat J}}  \bE\|
 &\leq&\| \frac{1}{T}  \bvarepsilon_y'\bP_{\widehat\bU_{\widehat J}} \| \|\bP_{\widehat\bU_{\widehat J}}   \bE\| =O_P(\frac{|J|_0\log N}{T\sqrt{N}}+\frac{|J|_0\sqrt{\log N}}{N\nu_{\min}\sqrt{T}})
 \end{eqnarray*}

\end{proof}

 \begin{lem}\label{lc.4}
For any $R\geq r$, \\
(i) $\frac{1}{T}\|\widehat\bvarepsilon_g-\bvarepsilon_g\|^2= O_P(\frac{|J|_0^2+|J|_0\log N}{T}+ \frac{|J|_0^2+\nu^{-2}_{\min}}{N} +\frac{|J|_0^{3/2}}{\nu_{\min}N\sqrt{T}} )
=\frac{1}{T}\|\widehat\bvarepsilon_y-\bvarepsilon_y\|^2.
$\\
(ii)
$
\frac{1}{T}  \bvarepsilon_y'( \widehat\bvarepsilon_g- \bvarepsilon_g)
=O_P(\frac{|J|_0\log N}{T}+\frac{|J|_0+\nu^{-1}_{\min}}{\sqrt{NT}}  + \frac{ \nu_{\min}^{-1/2}|J|_0^{3/4}}{\sqrt{N}T^{3/4}} + \sqrt{\frac{\log N}{T}} \frac{|J|_0^2}{N\nu^2_{\min}}).
$
 The same rate applies to
$
 \frac{1}{T}  \bvarepsilon_g'(  \widehat \bvarepsilon_g-  \bvarepsilon_g)
 $
, $
 \frac{1}{T} \bfeta'(  \widehat\bvarepsilon_g-\bvarepsilon_g)
$,
$
\frac{1}{T}  \bvarepsilon_g'( \widehat\bvarepsilon_y- \bvarepsilon_y)
$
and
$
 \frac{1}{T}  \bvarepsilon_y'(  \widehat \bvarepsilon_y-  \bvarepsilon_y).
 $\\
 (iii) $\frac{1}{T}\widehat\bvarepsilon_g'\widehat\bvarepsilon_g= \frac{1}{T} \bvarepsilon_g'\bvarepsilon_g+o_P(1)$.

 \end{lem}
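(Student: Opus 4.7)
The plan is to leverage the explicit form
$$
\widehat\bvarepsilon_g = \bM_{\widehat\bU_{\widehat J}}\bM_{\widehat\bF}\bG, \qquad \bG = \bF\balpha_g + \bU'\btheta + \bvarepsilon_g,
$$
together with the identity $\bvarepsilon_g = \bM_{\widehat\bU_{\widehat J}}\bM_{\widehat\bF}\bvarepsilon_g + (\bP_{\widehat\bF}+\bM_{\widehat\bF}\bP_{\widehat\bU_{\widehat J}}\text{-correction})\bvarepsilon_g$, so that
$$
\widehat\bvarepsilon_g - \bvarepsilon_g = \bM_{\widehat\bU_{\widehat J}}\bM_{\widehat\bF}\bF\balpha_g \;+\; \bM_{\widehat\bU_{\widehat J}}\bM_{\widehat\bF}\bU'\btheta \;-\; \bP_{\widehat\bF}\bvarepsilon_g \;-\; \bM_{\widehat\bF}\bP_{\widehat\bU_{\widehat J}}\bM_{\widehat\bF}\bvarepsilon_g.
$$
Each of the four pieces will be bounded using lemmas already established. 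For the factor residual $\bM_{\widehat\bF}\bF\balpha_g$, I would use the identity $\widehat\bF\bM - \bF = \bE(\bH\bH')^{+}\bH$ from the proof of Theorem~\ref{t2.1} together with $\|\balpha_g\|\le C$, which yields $\frac{1}{T}\|\bM_{\widehat\bF}\bF\balpha_g\|^2=O_P(\nu_{\min}^{-2}/N)$. The $\bU'\btheta$ piece is precisely what Lemma~\ref{lc.3}(i) controls, giving $O_P(|J|_0\log N/T+(|J|_0^2+\nu_{\min}^{-2})/N+|J|_0^2/T)$. The two terms involving $\bvarepsilon_g$ are bounded by Lemma~\ref{lc.2}(i) and Lemma~\ref{lc.2}(iii). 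Adding these pieces and combining with the $|J|_0^{3/2}/(\nu_{\min}N\sqrt{T})$ cross rate produced by mixed factor/idiosyncratic projections yields part~(i); the analogous computation with $\bvarepsilon_y = \bvarepsilon_g\bbeta + \bfeta$ (and noting $\|\bbeta\|\le C$, $\E[\bfeta\mid\cdot]=0$) gives the same rate for $\widehat\bvarepsilon_y - \bvarepsilon_y$.

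For part~(ii), I would take the inner product of $\bvarepsilon_y$ with the same four-term decomposition. The first term gives $\frac{1}{T}\bvarepsilon_y'\bM_{\widehat\bU_{\widehat J}}\bM_{\widehat\bF}\bF\balpha_g$, bounded by Cauchy--Schwarz together with Lemma~\ref{lc.2}(iii) (which controls $\tfrac{1}{T}\|\bP_{\widehat\bU_{\widehat J}}\bvarepsilon_y\|^2$) and the factor-residual bound above. The $\bU'\btheta$ cross term is exactly Lemma~\ref{lc.3}(ii), which provides the dominant rate. The pure-$\bvarepsilon_g$ terms $\frac{1}{T}\bvarepsilon_y'\bP_{\widehat\bF}\bvarepsilon_g$ and $\frac{1}{T}\bvarepsilon_y'\bP_{\widehat\bU_{\widehat J}}\bvarepsilon_g$ are bounded via Cauchy--Schwarz using Lemma~\ref{lc.2}(i) and Lemma~\ref{lc.2}(iii). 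Collecting, we obtain the rate stated in part~(ii). The same argument, with the roles of $\bvarepsilon_y$ and $\bvarepsilon_g$ symmetric, handles $\bvarepsilon_g'(\widehat\bvarepsilon_g-\bvarepsilon_g)$, $\bvarepsilon_g'(\widehat\bvarepsilon_y-\bvarepsilon_y)$, and $\bvarepsilon_y'(\widehat\bvarepsilon_y-\bvarepsilon_y)$. The term $\frac{1}{T}\bfeta'(\widehat\bvarepsilon_g-\bvarepsilon_g)$ uses the conditional mean-zero property of $\eta_t$ given $(\bff_t,\bg_t,\bu_t,\bW)$ from Assumption~\ref{a4.1}(i), so the same bounds apply after replacing $\bvarepsilon_y$ by $\bfeta$.

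Part~(iii) is immediate from the algebraic expansion
$$
\tfrac{1}{T}\widehat\bvarepsilon_g'\widehat\bvarepsilon_g = \tfrac{1}{T}\bvarepsilon_g'\bvarepsilon_g + \tfrac{2}{T}\bvarepsilon_g'(\widehat\bvarepsilon_g-\bvarepsilon_g) + \tfrac{1}{T}\|\widehat\bvarepsilon_g-\bvarepsilon_g\|^2,
$$
where the last two correction terms are $o_P(1)$ by parts (ii) and (i) respectively, under the rate assumptions $|J|_0^4\log^2 N = o(T)$ and $T|J|_0^4 = o(N^2\min\{1,|J|_0^4\nu_{\min}^4(\bH)\})$ of Assumption~\ref{ass3.5}(iii).

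The main obstacle is handling the cross-term $\frac{1}{T}\bvarepsilon_y'\bM_{\widehat\bU_{\widehat J}}\bU'\btheta$ in part~(ii), because a naive Cauchy--Schwarz together with part~(i) only yields $O_P(\sqrt{|J|_0\log N/T}\cdot\sqrt{|J|_0\log N/T}) = O_P(|J|_0\log N/T)$, which is adequate for most pieces but requires the sparse selection structure to avoid producing $\sqrt{N}$-type inflation from the full $\widehat\bU$. Lemma~\ref{lc.3}(ii) bypasses this by exploiting $\bP_{\widehat\bU_{\widehat J}}\widehat\bU'\btheta = \widehat\bU'\widehat\bm$ for some $\widehat\bm$ supported on $\widehat J$ with $\|\btheta-\widehat\bm\|_1^2=O_P(|J|_0^2\log N/T)$, which is the essential input that needs to be carefully assembled before the present lemma can be concluded.
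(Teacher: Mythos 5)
Your decomposition of $\widehat\bvarepsilon_g-\bvarepsilon_g$ and your allocation of the pieces to Lemmas \ref{lc.2} and \ref{lc.3} is essentially the paper's own route: the paper uses $\widehat\bU=\bX\bM_{\widehat\bF}$ to get $\bP_{\widehat\bU_{\widehat J}}\bP_{\widehat\bF}=0$, hence $\bM_{\widehat\bU_{\widehat J}}\bM_{\widehat\bF}=\bM_{\widehat\bF}-\bP_{\widehat\bU_{\widehat J}}$ (your extra factor $\bM_{\widehat\bF}\bP_{\widehat\bU_{\widehat J}}\bM_{\widehat\bF}$ collapses to $\bP_{\widehat\bU_{\widehat J}}$ under this identity), and then rewrites the factor-residual term as $\bM_{\widehat\bU_{\widehat J}}\bM_{\widehat\bF}\bF\balpha_g=-(\bI-\bP_{\widehat\bF}-\bP_{\widehat\bU_{\widehat J}})\bE\bH^{+'}\balpha_g$ using $\bF=\widehat\bF\bH^{+'}-\bE\bH^{+'}$. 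You also correctly identify the main obstacle in part (ii) and its resolution through Lemma \ref{lc.3}(ii) via $\|\btheta-\widehat\bm\|_1^2=O_P(|J|_0^2\log N/T)$. Parts (i) and (iii) go through exactly as you describe.

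There is, however, one step in part (ii) that would fail as written. You propose to bound $\frac{1}{T}\bvarepsilon_y'\bM_{\widehat\bU_{\widehat J}}\bM_{\widehat\bF}\bF\balpha_g$ by Cauchy--Schwarz against the factor-residual bound $\frac{1}{T}\|\bM_{\widehat\bF}\bF\balpha_g\|^2=O_P(\nu_{\min}^{-2}/N)$. That gives only $O_P(\nu_{\min}^{-1}N^{-1/2})$, which is a factor $\sqrt{T}$ larger than the $\nu_{\min}^{-1}(NT)^{-1/2}$ appearing in the stated rate, is not absorbed by any other term, and would destroy the $\sqrt{T}$-negligibility needed in Theorem \ref{t3.2}. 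The correct treatment requires the explicit expansion into $-\frac{1}{T}\bvarepsilon_y'\bE\bH^{+'}\balpha_g$ plus projection corrections, and then the concentration bound $\|\frac{1}{T}\bvarepsilon_y'\bE\|=O_P((NT)^{-1/2})$, which exploits $\E(\bvarepsilon_{y,t}\mid\bu_t,\bff_t,\bW)=0$ from Assumption \ref{a4.1}(i) together with the weak-dependence conditions (this is how the paper proceeds, invoking Lemma \ref{lc.2}(iv) for the $\bP_{\widehat\bF}$ piece and Lemma \ref{lc.3}(iii) for the $\bP_{\widehat\bU_{\widehat J}}$ piece). Once that term is handled by orthogonality rather than by Cauchy--Schwarz, the rest of your argument assembles into the stated rates.
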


\begin{proof}
Note that $  \widehat \bvarepsilon_g=\bM_{\widehat\bU_{\widehat J}} \bM_{\widehat\bF} \bG$
and $\bG=\bF\balpha_g+\bU'\btheta+\bvarepsilon_g$. Also,  $\widehat\bU=\bX\bM_{\widehat\bF} $ implies
$$
 \bP_{\widehat\bU_{\widehat J}} \bP_{\widehat\bF} =0, \text{ and } \bM_{\widehat\bU_{\widehat J}} \bM_{\widehat\bF} =
 \bM_{\widehat\bF} -\bP_{\widehat\bU_{\widehat J}}  .
$$
Recall that $\bH^+\bH=\bI$ and $\widehat\bF=\bF\bH'+\bE$,
hence straightforward calculations yield
\begin{eqnarray}\label{ec.1}
\widehat \bvarepsilon_g-\bvarepsilon_g&=&
 \bM_{\widehat\bU_{\widehat J}}   \bU'\btheta
 -  \bP_{\widehat\bF} \bU'\btheta
+
\bM_{\widehat\bU_{\widehat J}} \bM_{\widehat\bF} \bF\balpha_g
 -\bP_{\widehat\bU_{\widehat J}}   \bvarepsilon_g
  -\bP_{\widehat\bF} \bvarepsilon_g\cr
  &=&
   \bM_{\widehat\bU_{\widehat J}}   \bU'\btheta
 -  \bP_{\widehat\bF} \bU'\btheta
 -\bP_{\widehat\bU_{\widehat J}}   \bvarepsilon_g
  -\bP_{\widehat\bF} \bvarepsilon_g
  -
 (  \bI-  \bP_{\widehat\bF} -\bP_{\widehat\bU_{\widehat J}}  )\bE\bH^{+'}\balpha_g.
\end{eqnarray}
It  follows from Lemmas \ref{lc.2}, \ref{lc.3}  that $\frac{1}{T}\|\widehat\bvarepsilon_g-\bvarepsilon_g\|^2= O_P(\frac{|J|_0^2+|J|_0\log N}{T}+ \frac{|J|_0^2+\nu^{-2}_{\min}}{N} +\frac{|J|_0^{3/2}}{\nu_{\min}N\sqrt{T}} ).$ The proof for $\frac{1}{T}\|\widehat\bvarepsilon_g-\bvarepsilon_g\|^2$ follows similarly.

(ii) It follows from (\ref{ec.1}) and Lemmas \ref{lc.2}  \ref{lc.3} that
\begin{eqnarray*}
\frac{1}{T}  \bvarepsilon_y'( \widehat\bvarepsilon_g- \bvarepsilon_g)
&=&  \frac{1}{T}  \bvarepsilon_y'  \bM_{\widehat\bU_{\widehat J}}   \bU'\btheta
 - \frac{1}{T}  \bvarepsilon_y' \bP_{\widehat\bF} \bU'\btheta
 -\frac{1}{T}  \bvarepsilon_y'\bP_{\widehat\bU_{\widehat J}}   \bvarepsilon_g
  -\frac{1}{T}  \bvarepsilon_y'\bP_{\widehat\bF} \bvarepsilon_g\cr
  && -   \frac{1}{T}  \bvarepsilon_y'\bE\bH^{+'}\balpha_g
  - \frac{1}{T}  \bvarepsilon_y'\bP_{\widehat\bF}\bE\bH^{+'}\balpha_g
  -\frac{1}{T}  \bvarepsilon_y'\bP_{\widehat\bU_{\widehat J}}  \bE\bH^{+'}\balpha_g\cr
  &\leq&  O_P(\frac{|J|_0\log N}{T}+\frac{|J|_0+\nu^{-1}_{\min}}{\sqrt{NT}}  + \frac{ \nu_{\min}^{-1/2}|J|_0^{3/4}}{\sqrt{N}T^{3/4}} + \sqrt{\frac{\log N}{T}} \frac{|J|_0^2}{N\nu^2_{\min}}).
\end{eqnarray*}
The same proof applies to other terms as well.

(iii) It follows from parts (i)  that all these terms are $o_P(1)$, given that
$|J|_0^2=o(\min\{T,N\})$, $|J|_0\log N=o(T)$.

\end{proof}

\subsubsection{The case $r=0$: there are no factors.}
\begin{proof}
In this case
 $
\bx_t=\bu_t.
 $
 And we have
 $$
 \widehat\bF= \frac{1}{N}\bX'\bW=\frac{1}{N}\bU'\bW:=\bE.
 $$
 Then $
\lambda_{\min}( \frac{1}{T}\widehat\bF'\widehat\bF)
=\lambda_{\min}( \frac{1}{T}\bE'\bE)\geq \frac{c}{N}
 $
 with probability approaching one, still by Lemma \ref{la.1}. Hence $  \frac{1}{T}\widehat\bF'\widehat\bF$ is still invertible.  In addition, $\widehat\bU=\bX\bM_{\widehat\bF}$ implies
 $
\bU- \widehat\bU=\bU\bP_{\bE} .
 $
 Also,
 \begin{eqnarray*}
 y_t&=&\bgamma'\bu_t+\bvarepsilon_{y,t}\cr
 \bg_t&=&\btheta'\bu_t+\bvarepsilon_{g,t}\cr
 \bvarepsilon_{y,t}&=&\bbeta'\bvarepsilon_{g,t}+  \eta_t
 \end{eqnarray*}
 Hence $\balpha_g=\balpha_y=0$. Then 
 $
\frac{1}{T} \widehat\bF'\widehat\bF = \frac{1}{T} \bE'\bE=\frac{1}{N^2}\bW'\Cov(\bu_t)\bW +O_P(\frac{1}{N\sqrt{T}}).
 $
 Hence with probability approaching one $\lambda_{\min}(\frac{1}{T} \widehat\bF'\widehat\bF)\geq cN^{-1} $. 
 In addition, $\widehat\balpha_y=(\bE'\bE)^{-1}\bE'\bU'\bgamma + (\bE'\bE)^{-1}\bE'\bepsilon_y$ implies
 $\frac{1}{T}\sum_{t=1}^T(\widehat\balpha_y'\widehat\bff_t)^2=O_P(\frac{ |J|_0^2}{N}+\frac{ |J|_0^2}{ { T}}).$
 
 As for the ``score" $\max_i|\frac{1}{T}\sum_t(\varepsilon_{g,t}+d_t)\widehat u_{it} |$ in the proof of Proposition \ref{proc.1}, note that 
 \begin{eqnarray}\label{eqa.6}
 \max_{i,j\leq N}|\frac{1}{T}\sum_t(\widehat u_{it}\widehat u_{jt}- u_{it}u_{jt})|&\leq& \frac{3}{T}\|\bU\bP_\bE\bU'\|_\infty=O_P(\frac{1}{N} + \frac{ \log N}{T})\cr
\max_{i\leq N}| \frac{1}{T}\sum_t \widehat\balpha_y'\widehat\bff_t\widehat u_{it}|&=&O_P(\frac{|J|_0}{N} + \frac{|J|_0\log N}{T})
\cr
 \max_{i\leq N}| \frac{1}{T}\sum_t \widehat u_{it}(\bu_t-\widehat \bu_t)'\btheta |&=& \frac{1}{T}\|\bU\bP_\bE\bU'\|_\infty O_P(|J|_0)=O_P(\frac{|J|_0}{N} + \frac{|J|_0\log N}{T}) \cr
 \max_{i\leq N}| \frac{1}{T}\sum_t \widehat u_{it}\varepsilon_{g,t}|&=& O_P(\frac{\sqrt{\log N}}{T}+\frac{1}{\sqrt{TN}}).
 \end{eqnarray}
 As for the residual, note that $  \widehat \bvarepsilon_g=\bM_{\widehat\bU_{\widehat J}} \bM_{\bE} \bG$
and $\bG= \bU'\btheta+\bvarepsilon_g$.  Then 
 $$
\widehat \bvarepsilon_g-\bvarepsilon_g=
    \bM_{\widehat\bU_{\widehat J}}   \bU'\btheta
 -  \bP_{\bE} \bU'\btheta
 -\bP_{\widehat\bU_{\widehat J}}   \bvarepsilon_g
  -\bP_{\bE} \bvarepsilon_g.
 $$
All the proofs in Section \ref{sec:thereisfactor} carry over. In fact, all terms involving $\balpha_g, \bH$ and $\bH^+$ can be  set to zero.

 In addition, in the case $R=r=0$, the setting/estimators are  the same as in \cite{belloni2014inference}.
 \end{proof}

\subsubsection{Proof of Corollary \ref{cor3.1}.}

\begin{proof}
The corollary immediately follows from Theorem \ref{t3.2}. If there exist a pair $(r, R)$ that violate the conclusion of the corollary, then it also violates the conclusion of Theorem \ref{t3.2}.  This finishes the proof.
\end{proof}

\subsection{Proof of Theorem \ref{t2.2}}
\begin{proof}
	In the proof of Theorem \ref{t2.2} we assume $R\geq r$.
	
	(i)  When $r>0$,
	by Lemma \ref{la.1new}, 
	\begin{eqnarray*}
		\max_{i,j\leq N}|\frac{1}{T}\sum_t(\widehat u_{it}\widehat u_{jt}- u_{it}u_{jt})|\leq \|\frac{1}{T}\widehat\bU\widehat\bU'- \frac{1}{T} \bU \bU'\|_\infty\leq O_P(\frac{\log N}{T}+\frac{1}{N\nu_{\min}^2})  .
	\end{eqnarray*}
	When  $r=0$ and $R>0$, by (\ref{eqa.6}), $\max_{i,j\leq N}|\frac{1}{T}\sum_t(\widehat u_{it}\widehat u_{jt}- u_{it}u_{jt})|\leq   O_P(\frac{\log N}{T}+\frac{1}{N\nu_{\min}^2})  .
$

In both cases, part (i) implies, for $\nu_{\min}^2\gg \frac{1}{\sqrt{N}}$ or $\nu_{\min}^2\gg\frac{1}{N}\sqrt{\frac{T}{\log N}}$,
	\begin{eqnarray*}
		\max_{i,j\leq N} |s_{u,ij}-\E u_{it}u_{jt}|
		& \leq&   \max_{i,j\leq N} |\frac{1}{T}\sum_t\widehat u_{it}\widehat u_{jt}-  u_{it}u_{jt}|
		+\max_{i,j\leq N} |\frac{1}{T}\sum_t  u_{it}u_{jt}- \E u_{it}u_{jt}   | \cr
		&\leq&  O_P(\sqrt{\frac{\log N}{T}}+\frac{1}{N\nu_{\min}^2})
		= O_P(\sqrt{\frac{\log N}{T}}+\frac{1}{\sqrt{N}}).
	\end{eqnarray*}
	where $\max_{i,j\leq N} |\frac{1}{T}\sum_t  u_{it}u_{jt}- \E u_{it}u_{jt}   | =O_P(\sqrt{\frac{\log N}{T}})$.

	Given this convergence, the convergence of $\widehat\bSigma_u$ and $\widehat\bSigma_u^{-1}$ in (ii)(iii) then follows from the same proof of Theorem A.1 of \cite{POET}. We thus omit it for brevity.
	Finally, the case $r=R=0$ is the usual case of sparse thresholding as in \cite{Bickel08a}.
\end{proof}

 \subsection{Proof  of Theorem \ref{t3.3} }

 \begin{proof}
 First note that when $R=r$, by (\ref{eqa.2new})
$$
 \|(\frac{1}{T}\widehat\bF'\widehat\bF)^{-1}-(\frac{1}{T}\bH\bF'\bF\bH')^{-1}\| \leq O_P (\frac{1}{N}+\frac{\nu_{\max}(\bH)}{\sqrt{TN}})\frac{1}{\nu^4_{\min}(\bH)} .
 $$
 Also by the proof of Theorem \ref{t2.1} for  $
 \|(\frac{1}{T}\widehat\bF'\widehat\bF)^{-1}\|
 +\|(\frac{1}{T}\bH\bF'\bF\bH')^{-1}\|\leq \frac{c}{\nu^2_{\min}(\bH)} $.
 Because $\bP_{\widehat\bF}-\bP_\bG
 =\bE(\widehat\bF'\widehat\bF)^{-1}\bH\bF'
 +\bF\bH'[(\widehat\bF'\widehat\bF)^{-1}-(\bH\bF'\bF\bH')^{-1}]\bH\bF'
 +\widehat\bF(\widehat\bF'\widehat\bF)^{-1}\bE'$, we have
  \begin{eqnarray*}
 	\|\bP_{\widehat\bF}-\bP_\bG\|_F^2
 	&=&\tr  (\widehat\bF'\widehat\bF)^{-1}\bH\bF'\bF\bH'(\widehat\bF'\widehat\bF)^{-1}\bE'   \bE
 		+  \tr (\widehat\bF'\widehat\bF)^{-1}\bE'   \bE   \cr
 	&&+ 2 \tr  (\widehat\bF'\widehat\bF)^{-1}\bH\bF'  \bF\bH'[(\widehat\bF'\widehat\bF)^{-1}-(\bH\bF'\bF\bH')^{-1}]\bH\bF'  \bE \cr
 		&&+  \tr [(\widehat\bF'\widehat\bF)^{-1}-(\bH\bF'\bF\bH')^{-1}]\bH\bF'\bF\bH'[(\widehat\bF'\widehat\bF)^{-1}-(\bH\bF'\bF\bH')^{-1}]\bH\bF'   \bF\bH'  \cr
 					&&+  2\tr  \bF\bH'[(\widehat\bF'\widehat\bF)^{-1}-(\bH\bF'\bF\bH')^{-1}]\bH\bF'   \bE(\widehat\bF'\widehat\bF)^{-1}\widehat\bF'  \cr
 				&&+ 2 \tr  (\widehat\bF'\widehat\bF)^{-1}\bH\bF' \bE(\widehat\bF'\widehat\bF)^{-1}\bE' \bE  \cr
 					&&+ 2 \tr (\widehat\bF'\widehat\bF)^{-1}\bH\bF' \bE(\widehat\bF'\widehat\bF)^{-1}\bH\bF' \bE   \cr
 					&=& 2 \tr \bH^{'-1}( \bF' \bF)^{-1}\bH^{-1}\bE'   \bE  +O_P(\frac{1}{TN\nu^2_{\min}}+\frac{1}{N^2\nu_{\min}^4}+\frac{1}{N\sqrt{NT}\nu^3_{\min}}). 
 \end{eqnarray*}

 Write $X:= 2 \tr \bH^{'-1}( \bF' \bF)^{-1}\bH^{-1}\bE'   \bE  =\tr(\bA\frac{1}{T}\bE'\bE) $ and $\bA:=2\bH^{'-1}(\frac{1}{T} \bF' \bF)^{-1}\bH^{-1}$.
 Now
 $$
 \MEAN= \E( X|\bF,\bW)=\tr\bA \frac{1}{N^2}\bW'(\E\bu_t\bu_t'|\bF)\bW= \tr\bA \frac{1}{N^2}\bW' \bSigma_u\bW.
 $$
 We note that $
 \Var(X|\bF)=\frac{1}{TN^2}\sigma^2
 $ and that   $ N\sqrt{T}\frac{(X- \MEAN)}{  \sigma}\overset{d}{\longrightarrow}\mathcal N(0,1)$ due to the serial indepence of $\bu_t\bu_t'$ conditionally on $\bF$ and that $\E\|\frac{1}{\sqrt{N}}\bW'\bu_t\|^4<C$.  In addition,
 Lemma \ref{lc.5} below shows that with $\widehat{\MEAN}= \tr\widehat\bA \frac{1}{N^2}\bW'\widehat \bSigma_u\bW,$ and $\widehat \bA=2(\frac{1}{T}\widehat\bF'\widehat\bF)^{-1}$, we have   
 $$
(\widehat\MEAN-\MEAN)N\sqrt{T}=o_P(1).
 $$
  Also, the same lemma shows $\widehat\sigma^2\overset{P}{\longrightarrow}\sigma^2.$ As a result
 $$
 \frac{\|\bP_{\widehat\bF}-\bP_\bG\|_F^2-\widehat\MEAN}{\frac{1}{N\sqrt{T}}\widehat\sigma} = \frac{X- \MEAN}{\frac{1}{N\sqrt{T}} \sigma} +o_P(1)\overset{d}{\longrightarrow}\mathcal N(0,1). $$
 given that $\sigma>0$, $\sqrt{T}=o(N)$.

 \end{proof}

 \begin{lem}
 \label{lc.5}     Suppose $R=r$. Let $g_{NT}:= \nu_{\min}^{-2}\frac{1}{N}+\frac{\log N}{T} $.\\
(i) $\widehat\MEAN-\MEAN=O_P(\frac{g_{NT}^2}{N^2\nu^2_{\min}}) \sum_{\sigma_{u,ij}\neq 0} 1+O_P(\frac{1}{N^2\nu_{\min}^4}+\frac{1}{N\sqrt{NT}\nu^3_{\min}}).$\\
(ii) $\widehat\sigma^2\overset{P}{\longrightarrow}\sigma^2$.
 \end{lem}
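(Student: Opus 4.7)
My plan is to prove parts (i) and (ii) separately, exploiting the algebraic decomposition that splits the error into a ``coefficient'' error from $\widehat\bA-\bA$ and a ``covariance'' error from $\widehat\bSigma_u-\bSigma_u$.

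For part (i), I would write
\[
\widehat\MEAN-\MEAN = \tr\!\Big[(\widehat\bA-\bA)\tfrac{1}{N^2}\bW'\widehat\bSigma_u\bW\Big] + \tr\!\Big[\bA\tfrac{1}{N^2}\bW'(\widehat\bSigma_u-\bSigma_u)\bW\Big] =: T_1+T_2.
\]
To control $T_1$, use $R=r$ so that $\bH$ is invertible and both $\widehat\bA = 2(\tfrac{1}{T}\widehat\bF'\widehat\bF)^{-1}$ and $\bA=2\bH^{'-1}(\tfrac{1}{T}\bF'\bF)^{-1}\bH^{-1}$ are symmetric $r\times r$ matrices. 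Writing $\widehat\bA-\bA = -\widehat\bA\,\bDelta_0\,\bA/2$ with $\bDelta_0 := \tfrac{1}{T}\widehat\bF'\widehat\bF - \bH\tfrac{1}{T}\bF'\bF\bH'$, I apply equation (A.1) in the proof of Proposition~\ref{la.2} to bound $\|\bDelta_0\| = O_P(\tfrac{1}{N}+\tfrac{\nu_{\max}}{\sqrt{NT}})$, together with $\|(\bH\tfrac{1}{T}\bF'\bF\bH')^{-1}\|=O_P(\nu_{\min}^{-2})$ and (via Proposition~\ref{la.2}(ii) applied with $\bK=\bI$) $\|(\tfrac{1}{T}\widehat\bF'\widehat\bF)^{-1}\|=O_P(\nu_{\min}^{-2})$ (the latter holds because $\|\bDelta_0\|\ll \nu_{\min}^2$). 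This yields $\|\widehat\bA-\bA\|=O_P(\nu_{\min}^{-4}/N + \nu_{\min}^{-3}/\sqrt{NT})$. Since $\|\tfrac{1}{N^2}\bW'\widehat\bSigma_u\bW\| = O_P(1/N)$ and the trace is over an $r\times r$ matrix, $T_1$ matches the second bracket $O_P(\tfrac{1}{N^2\nu_{\min}^4}+\tfrac{1}{N\sqrt{NT}\nu_{\min}^3})$ in the lemma.

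For $T_2$, I invoke strict sparsity (Assumption on $\min|\sigma_{u,ij}|\gg\omega_{NT}$): with probability approaching one, the support of $\widehat\bSigma_u$ coincides with the support of $\bSigma_u$, so
\[
T_2 = \tfrac{1}{N^2}\sum_{(i,j):\sigma_{u,ij}\neq 0}(\bW\bA\bW')_{ij}\bigl(\widehat\sigma_{u,ij}-\sigma_{u,ij}\bigr).
\]
For each nonzero entry, SCAD-type thresholding gives $\widehat\sigma_{u,ij}-s_{u,ij}=O(\tau_{ij}^2)$ and $s_{u,ij}-\tfrac{1}{T}\sum_t u_{it}u_{jt} = O_P(g_{NT})$ uniformly by Lemma~\ref{la.1new}(iv). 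Splitting $\widehat\sigma_{u,ij}-\sigma_{u,ij}$ accordingly produces a deterministic ``bias'' piece bounded trivially by $\max|(\bW\bA\bW')_{ij}|\cdot D_N/N^2 = O(\nu_{\min}^{-2}D_N/N^2)$ times $O(g_{NT}^2)$ after a careful decomposition of $\widehat u_{it}\widehat u_{jt}-u_{it}u_{jt}$ (separating the cross term $(\widehat u_{it}-u_{it})u_{jt}$, whose weighted sum telescopes through the expansion of $\widehat\bU-\bU$ in (A.2), from the smaller quadratic term), plus a centered piece $\tfrac{1}{N^2}\sum_{(i,j):nz}(\bW\bA\bW')_{ij}\bigl(\tfrac{1}{T}\sum_t u_{it}u_{jt}-\sigma_{u,ij}\bigr)$ whose variance is directly bounded by Assumption~\ref{as3.8} as $\tfrac{\nu_{\min}^{-4}}{N^4 T}\cdot N = O(\tfrac{1}{N^3 T \nu_{\min}^4})$, giving $O_P(N^{-3/2}T^{-1/2}\nu_{\min}^{-2})$ and absorbable into the second bracket.

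For part (ii), the key observation is that $\bA$, $\widehat\bA$, $\bV$, $\widehat\bV$ are all symmetric, so for $\bZ\sim\mathcal{N}(0,\bV)$ the standard identity $\Var(\bZ'\bA\bZ)=2\tr((\bA\bV)^2)$ gives the closed forms $f(\bA,\bV)=2\tr((\bA\bV)^2)$ and $\widehat\sigma^2=2\tr((\widehat\bA\widehat\bV)^2)$. Then
\[
|\widehat\sigma^2-f(\bA,\bV)| \leq 2R\,\|\widehat\bA\widehat\bV-\bA\bV\|\,\bigl(\|\widehat\bA\widehat\bV\|+\|\bA\bV\|\bigr),
\]
and $\|\widehat\bA\widehat\bV-\bA\bV\| \leq \|\widehat\bA-\bA\|\|\widehat\bV\|+\|\bA\|\|\widehat\bV-\bV\|$. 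Under Assumption~\ref{as3.6}(iii) we have $\nu_{\min}>c$, so $\|\bA\|=O(1)$ and the analysis from part (i) gives $\|\widehat\bA-\bA\|=o_P(1)$; Theorem~\ref{t2.2} yields $\|\widehat\bSigma_u-\bSigma_u\|=o_P(1)$, hence $\|\widehat\bV-\bV\|\leq\|\tfrac{1}{N}\bW'(\widehat\bSigma_u-\bSigma_u)\bW\|+\|\tfrac{1}{N}\bW'\bSigma_u\bW-\bV\|=o_P(1)$ by the definition of $\bV$. Combined with the assumed convergence $|f(\bA,\bV)-\sigma^2|\to 0$, the triangle inequality yields $\widehat\sigma^2\overset{P}{\to}\sigma^2$.

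The main obstacle is getting $g_{NT}^2$ rather than $g_{NT}$ in the sparsity term of part (i): the uniform pointwise bound on $\widehat\sigma_{u,ij}-\sigma_{u,ij}$ only delivers $g_{NT}$, and the improvement requires (a) decomposing $\widehat u_{it}\widehat u_{jt}-u_{it}u_{jt}$ into a quadratic-in-$(\widehat\bU-\bU)$ piece plus linear cross terms, and (b) exploiting Assumption~\ref{as3.8}'s bound on accumulated covariances so that the linear cross-piece, when weighted and summed, does not saturate the naive $D_N g_{NT}$ rate. This is precisely the ``accumulation of high-dimensional estimation errors'' phenomenon flagged in the footnote before Assumption~\ref{as3.8}.
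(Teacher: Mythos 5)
Your decomposition of $\widehat\MEAN-\MEAN$ into a $\widehat\bA-\bA$ piece and a $\bW'(\widehat\bSigma_u-\bSigma_u)\bW$ piece is exactly the paper's, and your treatment of each piece tracks the paper's proof closely: the inverse-difference identity for $\widehat\bA-\bA$ together with $\|(\frac{1}{T}\widehat\bF'\widehat\bF)^{-1}\|=O_P(\nu_{\min}^{-2})$ reproduces the second bracket $O_P(\frac{1}{N^2\nu_{\min}^4}+\frac{1}{N\sqrt{NT}\nu_{\min}^3})$; the split of the covariance error into a vanishing contribution from the true zero set, a bias term over the $D_N$ nonzero entries, and a centered term whose variance is controlled by Assumption \ref{as3.8} plus conditional serial independence is also the paper's argument (its terms $a_1$, $a_{21}$, $a_{22}$), and your variance bound $O(\frac{1}{N^3T\nu_{\min}^4})$ matches its $\Var(a_{22})=O(\frac{1}{NT})$ after the $\frac{1}{N}\|\bA\|$ weighting. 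Your part (ii), via the closed form $f(\bA,\bV)=2\tr((\bA\bV)^2)$, is slightly more explicit than the paper's bare appeal to continuity of $f$ in $(\bA,\bV)$, but equivalent.

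The one step you do not execute is the one you yourself flag as the obstacle: obtaining $g_{NT}^2$ rather than $g_{NT}$ in the sparsity term. Your sketch --- separating the cross term $\frac{1}{T}\sum_t u_{it}(\widehat u_{jt}-u_{jt})$ and hoping its weighted sum over the nonzero support ``telescopes'' through the expansion of $\widehat\bU-\bU$ --- is not carried out, and it is doubtful it can be: Lemma \ref{la.1new}(iii) bounds that cross term uniformly only by $O_P(g_{NT})$, and the $\bB\bH^+\bE'$ term in the expansion of $\widehat\bU-\bU$ contributes at first order in $g_{NT}$ with no evident cancellation after weighting by $w_iw_j$. You should know, however, that the paper's own proof has the same defect: its displayed chain bounds $a_{21}$ by $\bigl[\max_i\frac{1}{T}\sum_t(\widehat u_{it}-u_{it})^2+\max_{ij}|\frac{1}{T}\sum_t u_{it}(\widehat u_{jt}-u_{jt})|\bigr]\frac{1}{N}\sum_{\sigma_{u,ij}\neq 0}1$, which by Lemma \ref{la.1new} is $O_P(g_{NT})\frac{1}{N}\sum_{\sigma_{u,ij}\neq 0}1$, and it then records this as $O_P(g_{NT}^2)\frac{1}{N}\sum_{\sigma_{u,ij}\neq 0}1$ with no further argument. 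What both arguments actually deliver is $O_P(\frac{g_{NT}}{N^2\nu_{\min}^2})D_N$, and that weaker rate suffices where the lemma is used: under Assumption \ref{as3.6}(iii) one has $g_{NT}\asymp\omega_{NT}^2$, so the strict-sparsity condition $(\frac{\omega_{NT}^2\sqrt{T}}{N})D_N\to 0$ already yields $N\sqrt{T}\cdot\frac{g_{NT}D_N}{N^2}\to 0$ in the proof of Theorem \ref{t3.3}. The clean resolution is therefore to prove and state the lemma with $g_{NT}$ in place of $g_{NT}^2$, which your executed steps already accomplish.
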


 \begin{proof}

   By lemma \ref{la.1new},  $$
\max_{ij}|\frac{1}{T}\sum_t  u_{it}(\widehat u_{jt}- u_{jt})|\leq  O_P(g_{NT}).
$$

 (i)    Recall $\bA:=2\bH^{'-1}(\frac{1}{T} \bF' \bF)^{-1}\bH^{-1}.$ Note that $\|\bA\|=O_P( \frac{1}{\nu^2_{\min}(\bH)} )$. We now bound $\frac{1}{N}\bW'(\widehat\bSigma_u-\bSigma_u)\bW$.  For   simplicity we focus on the   case $r=R=1$ and hard-thresholding estimator. The proof of SCAD thresholding follows from the same argument. We have
 \begin{eqnarray*}
 \frac{1}{N}\bW'(\widehat\bSigma_u-\bSigma_u)\bW
 =\frac{1}{N}\sum_{\sigma_{u,ij}=0} w_iw_j\widehat\sigma_{u,ij}+\frac{1}{N}\sum_{\sigma_{u,ij}\neq 0} w_iw_j(\widehat\sigma_{u,ij}-\sigma_{u,ij}):=a_1+a_2.
 \end{eqnarray*}
 Term
 $a_1$ satisfies: for any $\epsilon>0$, when $C$ in the threshold is large enough,
 $$
\mathbb P(a_1>(NT)^{-2})\leq \mathbb P(\max_{\sigma_{u,ij}=0}|\widehat\sigma_{u,ij}|\neq0)
\leq \mathbb P( |s_{u,ij}| > \tau_{ij}  ,\text{ for some } \sigma_{u,ij}=0)   <\epsilon   .
 $$ Thus $a_1=O_P((NT)^{-2}) $. The main task is to bound $a_2=\frac{1}{N}\sum_{\sigma_{u,ij}\neq 0} w_iw_j(\widehat\sigma_{u,ij}-\sigma_{u,ij}).$
 \begin{eqnarray*}
 a_2&=&a_{21} +a_{22},\cr
 a_{21}&=&\frac{1}{N}\sum_{\sigma_{u,ij}\neq 0} w_iw_j\frac{1}{T}\sum_t (\widehat u_{it}\widehat u_{jt}-u_{it}u_{jt})\cr
  a_{22}&=&\frac{1}{N}\sum_{\sigma_{u,ij}\neq 0} w_iw_j\frac{1}{T}\sum_t ( u_{it}u_{jt}-\E u_{it}u_{jt}).
  \end{eqnarray*}
Now for $\omega_{NT}:=\sqrt{\frac{\log N}{T}}+\frac{1}{\sqrt{N}}$, by part (i),
  \begin{eqnarray*}
 a_{21}&=&\frac{1}{N}\sum_{\sigma_{u,ij}\neq 0} w_iw_j\frac{1}{T}\sum_t (\widehat u_{it}-u_{it})(\widehat u_{jt} -u_{jt})+
 \frac{2}{N}\sum_{\sigma_{u,ij}\neq 0} w_iw_j\frac{1}{T}\sum_t  u_{it}(\widehat u_{jt}- u_{jt})\cr
 &\leq& [\max_i\frac{1}{T}\sum_t(\widehat u_{it}-u_{it})^2+\max_{ij}|\frac{1}{T}\sum_t  u_{it}(\widehat u_{jt}- u_{jt})|  ] \frac{1}{N}\sum_{\sigma_{u,ij}\neq 0} 1
  \cr
  &\leq& O_P(g_{NT}^2)\frac{1}{N}\sum_{\sigma_{u,ij}\neq 0} 1.
  \end{eqnarray*}
  As for $a_{22}, $ due to    $ \frac{1}{N}\sum_{\sigma_{u,mn}\neq 0} \sum_{\sigma_{u,ij}\neq 0}|\Cov( u_{it}u_{jt} ,    u_{mt}u_{nt} )|<C$ and serial independence,
  \begin{eqnarray*}
  \Var(a_{22})&\leq&\frac{1}{N^2T^2}\sum_{s,t\leq T}  \sum_{\sigma_{u,mn}\neq 0} \sum_{\sigma_{u,ij}\neq 0}|\Cov( u_{it}u_{jt} ,
u_{ms}u_{ns} )|\cr
&\leq&\frac{1}{N^2T}  \sum_{\sigma_{u,mn}\neq 0} \sum_{\sigma_{u,ij}\neq 0}|\Cov( u_{it}u_{jt} ,    u_{mt}u_{nt} )|\leq O(\frac{1}{NT}) .
  \end{eqnarray*}
  Together $a_{2}=O_P(g_{NT}^2)\frac{1}{N}\sum_{\sigma_{u,ij}\neq 0} 1+O_P(\frac{1}{\sqrt{NT}}) $. Therefore
  $$
  \frac{1}{N}\bW'(\widehat\bSigma_u-\bSigma_u)\bW
  =O_P(g_{NT}^2)\frac{1}{N}\sum_{\sigma_{u,ij}\neq 0} 1+O_P(\frac{1}{\sqrt{NT}}).
  $$
  This implies
  \begin{eqnarray*} |\widehat\MEAN-\MEAN|&\leq&
\frac{C}{N}  \|\bA\| \|\frac{1}{N}\bW'(\bSigma_u-\widehat\bSigma_u)\bW\|
+O_P(\frac{1}{N})\|\bA-2(\frac{1}{T}\widehat\bF'\widehat\bF)^{-1}\|
\cr
&\leq& O_P(\frac{g_{NT}^2}{N^2\nu^2_{\min}}) \sum_{\sigma_{u,ij}\neq 0} 1+O_P(\frac{1}{N^2\nu_{\min}^4}+\frac{1}{N\sqrt{NT}\nu^3_{\min}}).
  \end{eqnarray*}

  (ii)  First, note that $|\sigma^2-f(\bA,\bV)|\to 0$ by the assumption. In addition, it is easy to show that $\|\widehat\bA- \bA\|=o_P(1)$ and
  $
  \|\widehat\bV-\bV\|\leq\frac{1}{N}\|\bW\|^2\|\widehat\bSigma_u-\bSigma_u\|=o_P(1).
  $
  Since $f(\bA,\bV)$ is continuous in $(\bA,\bV)$ due to the property of the normality of $\bZ_t$, we have
  $|f(\bA,\bV)-f(\widehat\bA,\widehat\bV)|=o_P(1)$. Hence $|f(\widehat\bA,\widehat\bV)-\sigma^2|=o_P(1)$. This finishes the proof since $\widehat\sigma^2:=f(\widehat\bA,\widehat\bV)$.

 \end{proof}

\small

\bibliographystyle{ims}

\end{document}